\newcommand{\isom}{\cong} 
\newcommand{\union}{\cup}
\newcommand{\ints}{\cap}
\DeclareMathOperator{\End}{End}
\newcommand{\SO}{\mathrm{SO}}
\newcommand{\Spin}{\mathrm{Spin}}
\newcommand{\CC}{\mathbb{C}}
\newcommand{\HH}{\mathbb{H}}
\newcommand{\KK}{\mathbb{K}}
\newcommand{\OO}{\mathbb{O}}
\newcommand{\RR}{\mathbb{R}}
\newcommand{\ZZ}{\mathbb{Z}}
\newcommand{\cA}{\mathcal{A}}
\newcommand{\cB}{\mathcal{B}}
\newcommand{\cE}{\mathcal{E}}
\newcommand{\cF}{\mathcal{F}}
\newcommand{\cG}{\mathcal{G}}
\newcommand{\cJ}{\mathcal{J}}
\newcommand{\cK}{\mathcal{K}}
\newcommand{\cL}{\mathcal{L}}
\newcommand{\cN}{\mathcal{N}}
\newcommand{\cQ}{\mathcal{Q}}
\newcommand{\cU}{\mathcal{U}}
\newcommand{\cV}{\mathcal{V}}
\newcommand{\cW}{\mathcal{W}}
\newcommand{\cX}{\mathcal{X}}
\newcommand{\del}{\partial}
\newcommand{\vol}{\mathrm{Vol}}
\newcommand{\id}{\mathrm{Id}}
\newcommand{\grad}{\mathrm{grad}}
\newcommand{\hess}{\mathrm{Hess}}
\newcommand{\tr}{\mathrm{tr}}
\newcommand{\shadow}{\mathrm{Shadow}}
\newcommand{\Cay}{\mathcal{C}ay}
\newcommand{\Jac}{\mathrm{Jac}}
\newcommand{\FFJ}{\mathfrak{J}}
\newcommand{\FFX}{\mathfrak{X}}
\newcommand{\FFY}{\mathfrak{Y}}
\newcommand{\matii}[2]{\left(\begin{array}{cc} #1\\#2\end{array}\right)}
\newcommand{\matiii}[3]{\left(\begin{array}{ccc} #1\\#2\\#3\end{array}\right)}
\newcommand{\Addresses}{{
  \bigskip

\textsc{Department of Mathematics, University of Michigan, Ann Arbor, MI, 48109.} \par\nopagebreak 

\textit{E-mail address}: \texttt{\email{ruanyp@umich.edu}}.

}}
\newcommand*{\email}[1]{%
    \normalsize\href{mailto:#1}{#1}\par
    }
\newtheorem{thm}{Theorem}[section]
\newtheorem{prop}[thm]{Proposition}
\newtheorem{lem}[thm]{Lemma}
\newtheorem{cor}[thm]{Corollary}
\newtheorem{conj}[thm]{Conjecture}
\theoremstyle{definition}
\newtheorem{dfn}[thm]{Definition}
\newtheorem{notation}[thm]{Notation}
\newtheorem*{rmk}{Remark}
\title{Filling volume minimality and boundary rigidity of metrics close to a negatively curved symmetric metric}
\author{Yuping Ruan}
\date{}
\begin{document}
\maketitle
\begin{abstract}
This paper generalizes D. Burago and S. Ivanov's work \cite{Burago2} on filling volume minimality and boundary rigidity of almost real hyperbolic metrics. We show that regions with metrics close to a negatively curved symmetric metric are strict minimal fillings and hence boundary rigid. This includes perturbations of complex, quaternionic and Cayley hyperbolic metrics.
\end{abstract}
\tableofcontents

\section{Introduction}\label{s1}
\subsection{Boundary rigidity and filling minimality}\label{s1s1}
Let $M=(M^n,g)$ be a compact Riemannian manifold with boundary $\del M$. Its \emph{boundary distance function}, denoted by $\mathrm{bd}_M$, is the restriction of the Riemannian distance $d_M$ to $\del M\times \del M$. We say a Riemannian manifold is \emph{boundary rigid} if its metric is uniquely determined by its boundary distance function. More precisely, we have the following definition.
\begin{dfn}[Boundary rigidity]\label{bdry_rigid}
A compact Riemannian manifold $M$ (with boundary) is \emph{boundary rigid} if every Riemannian manifold $M'$ with $\del M'=\del M$ and $\mathrm{bd}_{M'}=\mathrm{bd}_M$ is isometric to $M$ via a boundary-preserving isometry. 
\end{dfn}
It is easy to construct manifolds which are \emph{not} boundary rigid. For example, if there exists some proper open subset which does not intersect any shortest path connecting boundary points, then the metric on this open subset does not affect the boundary distance function and hence such manifolds are not boundary rigid. In particular, ``large'' spherical caps, i.e. proper open subsets of a sphere $S^n\subset\RR^{n+1}$ bounded by a  hyperplane in $\RR^{n+1}$ which properly contain a hemisphere are not boundary rigid. Such manifolds must be avoided if one seeks boundary rigidity. Therefore it is reasonable to first consider simple manifolds, i.e. manifolds with strictly convex boundary such that every two points are connected by a unique geodesic segment and geodesics do not have conjugate points. In particular, we have the following conjecture by Michel.

\begin{conj}[Michel, \cite{Michel}]\label{Michel_conj}
All simple manifolds are boundary rigid.
\end{conj}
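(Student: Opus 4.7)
The plan is to attack Michel's conjecture through the filling volume framework of Gromov, which reduces boundary rigidity to a minimal-filling property. The first step is to establish the implication: if every simple manifold $M$ is a \emph{strict minimal filling} of $(\del M, \mathrm{bd}_M)$, meaning any Riemannian manifold $M'$ with $\del M'=\del M$ and $\mathrm{bd}_{M'}\ge \mathrm{bd}_M$ pointwise satisfies $\vol(M')\ge \vol(M)$ with equality forcing a boundary-preserving isometry, then $M$ is boundary rigid. This reduction is immediate: if $\mathrm{bd}_{M'}=\mathrm{bd}_M$, then both $M$ and $M'$ fill the same boundary data, applying minimality in both directions yields equal volumes, and the equality clause produces the required isometry.

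The heart of the argument is then to prove the strict minimal filling property itself. Following the philosophy of Burago-Ivanov for almost-hyperbolic metrics, I would look for a \emph{calibration}: for each boundary point $q\in\del M$ use the function $b_q(x):= d_M(x,q)$, which has unit gradient thanks to the no-conjugate-points assumption on a simple manifold, and assemble wedge products $db_{q_1}\wedge\cdots\wedge db_{q_n}$, averaged against a carefully chosen measure $\mu$ on $(\del M)^n$, into a global $n$-form $\omega$ on $M$. The hope is that $\omega$ coincides with the volume form on $M$ while its comass on any competitor filling $M'$ can be estimated using only $\mathrm{bd}_{M'}$; Stokes' theorem then gives $\vol(M')\ge \int_{M'}\omega = \int_{M}\omega = \vol(M)$, with strict inequality unless the calibration is attained. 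A natural ambient picture is the Kuratowski-type embedding $M\hookrightarrow \ell^\infty(\del M)$, $x\mapsto d_M(x,\cdot)$, which is an isometry on $\del M$ and allows the same form $\omega$ to be pulled back to any competitor filling through its own boundary distance function.

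The principal obstacle is constructing $\mu$ with the required comass bound in full generality. In the real hyperbolic case the Patterson--Sullivan measure on the ideal boundary provides the canonical choice, and the calibration inequality follows from strict negativity of curvature together with the symmetric-space structure; for complex, quaternionic, and Cayley hyperbolic metrics, which the present paper treats, the compatible K\"ahler or hypercomplex structure still affords enough rigidity for a perturbative extension. For a \emph{general} simple manifold, however, there is no ambient symmetric model and no distinguished measure on $\del M$, and the comass of a naive averaged form need not be bounded by $1$. I therefore expect this plan to yield only perturbative results near well-understood model metrics -- as indeed this paper does -- and to fall short of Michel's conjecture in full generality, which remains open in dimensions $\ge 3$ outside of special classes.
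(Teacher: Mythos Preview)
The statement you were asked to prove is Michel's \emph{conjecture}; the paper does not prove it and offers no proof to compare against. It is stated as an open problem, with the paper's contribution (Theorem~\ref{main_thm_alt}) establishing only the special case of regions with metrics $C^r$-close to a negatively curved symmetric metric. You recognize this yourself in your final paragraph, so in that sense your assessment is honest and correct: the conjecture remains open in dimension $\geq 3$ outside special classes.

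That said, your sketch of the strategy, even as a description of how the paper handles its special case, has some real discrepancies. You propose using distance functions $b_q(x)=d_M(x,q)$ to points $q\in\del M$ and embedding into $\ell^\infty(\del M)$; the paper instead uses Busemann functions $\Phi_s$ indexed by the \emph{ideal} boundary $S=\del_\infty M$ of the ambient symmetric space and embeds into $L^\infty(S)$ (Section~\ref{s4}). This is not a cosmetic difference: the ideal boundary is what gives access to the symmetric-space structure, the explicit Hessian formula for Busemann functions (Lemma~\ref{hess}), and the visual measure whose density is controlled by Lemma~\ref{delta_X_density}. Your calibration-form picture with wedge products $db_{q_1}\wedge\cdots\wedge db_{q_n}$ averaged over $(\del M)^n$ is closer in spirit to the flat case of \cite{Burago1}; the present paper does not build such a form but instead constructs a projection $P_\sigma:\cB(R)\cup\Phi(M)\to M$ via a barycenter equation (Definition~\ref{proj}) and controls its $dn$-dimensional Jacobian through the operators $A_\phi$, $E_\phi$ and the matrix $Q_\phi$ (Sections~\ref{s5}--\ref{s7}). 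The volume inequality then comes from Proposition~\ref{vol_ineq} and the Jacobian bound in Proposition~\ref{final_prop}, not from a Stokes-type argument on a calibrating form. Your mention of Patterson--Sullivan measures is in the right direction, but the actual mechanism is the pushforward measure $\mu_x$ and the Riemannian structure $G$ on $\cU\subset\cL$ (Definition~\ref{RML}), which make $\Phi$ an isometric immersion and $P$ a projection in the sense of Definition~\ref{Projection}.
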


Here and below, by \emph{region} we mean a connected open set with a smooth boundary. 

A lot of progress has been made toward boundary rigidity. Pestov and Uhlmann \cite{Pestov_Uhlmann} proved the above conjecture in dimension 2. In higher dimensions, regions in $\RR^n$ (Besikovitch \cite{Besikovitch}; Gromov \cite{Gromov}), in the open hemisphere $S^n_+$ (Michel \cite{Michel}) and in rank-$1$ symmetric spaces of non-compact type (following the volume entropy rigidity theorem by Besson, Courtois and Gallot \cite{BCG1}) are known to be boundary rigid. Burago and Ivanov proved boundary rigidity for almost Euclidean (\cite{Burago1}) and almost real hyperbolic (\cite{Burago2}) regions. Recently a very general result by Stefanov, Uhlmann and Vasy in \cite[Corollary 1.2]{SUV} showed that a simple manifold $(M,g)$ is boundary rigid if it satisfies any of the following conditions: 
\begin{enumerate}
\item[(1).] $(M,g)$ has non-positive sectional curvature;
\item[(2).] $(M,g)$ has non-negative sectional curvature;
\item[(3).] $(M,g)$ has no focal points.
\end{enumerate}
We refer the readers to \cite{Croke2}, \cite{Ivanov3} and \cite{SU} for a survey on boundary rigidity.

\begin{dfn}[Filling minimality]\label{minimal_filling}
A compact Riemannian manifold $M$ with boundary $\del M$ is a \emph{minimal filling} if, for every compact Riemannian manifold $M'$ with $\del M'=\del M$, the inequality
$$d_{M'}(x,y)\geq d_M(x,y)\quad\forall x,y\in\del M$$
implies that 
$$\vol(M')\geq\vol(M).$$
We say that $M$ is a \emph{strict minimal filling} if, in addition, the equality
$$\vol(M')=\vol(M)$$
holds only when $M$ and $M'$ are isometric via an isometry which fixes all boundary points.
\end{dfn}
\begin{rmk}
The idea of filling Riemannian manifolds was introduced by Gromov in \cite{Gromov}. If $M=(M,g)$ has a connected boundary with dimension $\geq 2$ and is a minimal filling, then by \cite[2.2A Proposition]{Gromov} $\vol(M)$ is called the \emph{filling volume} of the boundary $(\del M, \mathrm{bd}_M)$ equipped with boundary distance function denoted by $\mathrm{FillVol}(\del M, \mathrm{bd}_{(M,g)})$. 
See \cite{Gromov} for a detailed discussion.
\end{rmk} 

Similar to boundary rigidity, not all manifolds are minimal fillings. For example, ``large spherical caps'' fail to be minimal fillings because they have larger volume than the hemisphere with the same boundary and boundary distance function. In \cite{Burago1} and \cite{Burago2}, Burago and Ivanov made the following conjecture.

\begin{conj}[Burago-Ivanov, \cite{Burago1, Burago2}]\label{filling_conj}
Every simple manifold is a strict minimal filling.
\end{conj}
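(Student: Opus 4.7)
The plan is to attack this conjecture via the calibration-by-embedding framework of Burago and Ivanov, which has already yielded the answer for almost Euclidean and almost real hyperbolic regions and is the natural vehicle for the present paper's extension to perturbations of the remaining negatively curved symmetric metrics. First I would set up the Kuratowski-type isometric embedding
$$\iota_M : M \to L^\infty(\del M, \RR), \qquad x \mapsto d_M(x,\cdot)\big|_{\del M}.$$
For a simple manifold this map is genuinely distance-preserving, and for any competitor $M'$ with $\del M'=\del M$ and $d_{M'}\geq d_M$ on $\del M\times\del M$, the analogous map $\iota_{M'}:M'\to L^\infty(\del M,\RR)$ built from $d_{M'}$ is $1$-Lipschitz and agrees with $\iota_M$ on $\del M$. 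The volume inequality then reduces to comparing the $n$-volumes of $\iota_M(M)$ and $\iota_{M'}(M')$, both viewed as $n$-surfaces in $L^\infty$ filling the same boundary cycle, with volumes measured by an appropriate Finsler $n$-area.

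Next I would search for a \emph{calibration}: a measurable closed $n$-form $\omega$ on $L^\infty(\del M)$ of comass at most $1$ whose pullback along $\iota_M$ equals the Riemannian volume form of $M$. Given such an $\omega$, Ivanov's version of Stokes' theorem for Lipschitz differential forms yields
$$\vol(M) = \int_{\iota_M(M)} \omega = \int_{\iota_{M'}(M')} \omega \leq \vol(M'),$$
where the first equality uses that $\iota_M(M)$ and $\iota_{M'}(M')$ are homologous rel boundary, and the inequality uses the comass bound together with the $1$-Lipschitz property of $\iota_{M'}$. Strictness is then extracted by analyzing the pointwise equality case in the comass inequality, which forces $\iota_{M'}$ to be a Riemannian isometric embedding and hence, together with the boundary distance agreement, forces $M'$ and $M$ to be boundary-preservingly isometric.

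The construction of $\omega$ is the heart of the matter and the place where the rank-one symmetric structure is essential. In the real hyperbolic case of \cite{Burago2} the calibration is assembled as an integral against the natural measure on the space of totally geodesic hyperplanes of $\HH^n$, with a pointwise trigonometric identity verifying the comass bound. For the paper's setting I would first construct $\omega$ on each remaining rank-one symmetric model ($\CC H^n$, $\HH H^n$, $\mathbb{O}H^2$) using analogous averaging over the natural families of totally geodesic or horospherical submanifolds dictated by the isometry group, then transport $\omega$ to a nearby metric $g$ by a smooth diffeomorphism (for instance a barycentric or harmonic map) whose distortion is controlled by the $C^k$-distance from $g$ to the symmetric model. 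The extra term introduced in the comass bound is of order $\|g-g_0\|_{C^k}$, and is absorbed once the perturbation is small enough, giving the strict minimality along a $C^k$-neighborhood of the symmetric metric.

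The principal obstacle is exactly this calibration construction: away from the perturbative-symmetric regime no canonical family of flats is available to average over, and producing a closed $n$-form of unit comass on a space as wild as $L^\infty(\del M)$ that agrees with $\vol_M$ on $\iota_M(M)$ is genuinely mysterious, which is why Conjecture \ref{filling_conj} remains open in full generality. A secondary but concrete difficulty is that $L^\infty(\del M)$ is not a smooth manifold, so Stokes' theorem, the pullback of $\omega$, and the comass inequality must all be formulated in a weak sense via measurable forms and flat chains; verifying that this analytic machinery applies to $\iota_M$ and $\iota_{M'}$, and that it survives the diffeomorphic transport from the symmetric model to the perturbed metric, is the technical price of the approach.
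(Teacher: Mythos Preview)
First, note that this statement is a \emph{conjecture}: the paper does not prove it and presents it as open. What the paper establishes is the special case of regions with metric $C^r$-close to a negatively curved symmetric one (Theorem~\ref{main_thm_alt}). You clearly recognise this, since your proposal is a programme rather than a proof and you say so explicitly at the end.

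Your sketched route to that special case differs substantially from the paper's. You propose the Kuratowski embedding $x\mapsto d_M(x,\cdot)|_{\del M}$ into $L^\infty(\del M)$ and then a calibrating closed $n$-form plus Stokes' theorem. The paper instead extends $g$ to all of $\KK\mathbf{H}^n$ and uses the \emph{Busemann embedding} $\Phi:M\to L^\infty(\del_\infty M)$ into functions on the visual boundary (not $\del M$); in place of a calibration it constructs a \emph{projection} $P_\sigma:\cB(R)\cup\Phi(M)\to M$ via a barycenter equation in the spirit of Besson--Courtois--Gallot, and proves that $P_\sigma\circ f$ is volume-nonincreasing for every $1$-Lipschitz $f$. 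The volume inequality then comes from a pointwise Jacobian bound obtained by writing $d_\phi P=A_\phi^{-1}\circ E_\phi$ and controlling both factors through the moment matrix $Q_\phi$, not from a comass estimate and Stokes.

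There is also a concrete obstruction to your plan in the cases the paper treats. Averaging over ``totally geodesic hyperplanes'' is natural for $\RR\mathbf{H}^n$ but fails for $\CC\mathbf{H}^n$, $\HH\mathbf{H}^n$, $\OO\mathbf{H}^2$: these spaces have no totally geodesic real hypersurfaces, and their sectional curvature ranges over $[-4,-1]$. This non-constancy is precisely what makes $A_\phi\neq\id$ (in contrast to the real hyperbolic case of \cite{Burago2}) and drives the paper's main new work: building an approximating operator $\widehat{A}_\phi$ from an almost-complex, almost-quaternionic, or Cayley-line structure adapted to $g$, and then carefully estimating $A_\phi-\widehat{A}_\phi$ and $1-\Jac(\widehat{A}_\phi^{-1}\circ E_\phi)$ (Sections~\ref{s5}--\ref{s6}). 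Your proposed transport ``by a barycentric or harmonic map'' does not address this difficulty; the perturbation in the paper enters through the operators $A_{x,s}$ themselves, not through a diffeomorphic pullback of a model calibration.
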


When $M$ is simple, its volume is uniquely determined by its boundary distance function due to Santal\'o's formula \cite{Santalo}. Moreover, if $M'$ shares the same boundary distance function with $M$, $M'$ also has to be simple due to strict triangular inequality and smoothness of $\mathrm{bd}_M$.Therefore boundary rigidity is a direct corollary of filling minimality when the manifold is simple. A similar argument also works for \emph{strong geodesically minimizing} (SGM) manifolds (See \cite[1. Preliminaries]{Croke1} for the definition of the SGM condition and \cite[Lemma 5.1]{Croke1} for details of this argument.) which allows non-convex boundaries. In particular, compact regions with a smooth boundary inside a simply connected negatively curved manifold satisfies the SGM condition.
 
Unlike boundary rigidity, little is understood about filling minimality. For example, Gromov's sphere filling conjecture (\cite[pp. 13]{Gromov}) asks whether a hemisphere of dimension $n+1$ is a minimal filling for its boundary. This is still open with partial results proved by Gromov \cite[pp. 59]{Gromov} and Bangert-Croke-Ivanov-Katz \cite[Corollary 1.8]{BCIK}. Croke, Dairbekov and Sharafutdinov proved ``local filling minimality'' in \cite[Proposition 1.2]{CDS} for simple manifolds with ``limited positive curvature along geodesics''. Local filling minimality here refers to the case when $M'$ and $M$ (See Definition \ref{minimal_filling}) have the same underlying manifold and very close metrics. Burago and Ivanov in \cite{Burago1} and \cite{Burago2} proved strict filling minimality for almost Euclidean and almost hyperbolic regions (when $M$ is almost Euclidean and almost hyperbolic and $M'$ is arbitary). 

The filling minimality problem can be more difficult than the boundary rigidity problem. This is because having the same boundary distance function provides more information than having a larger boundary distance function due to \cite[Lemma 5.1]{Croke1}. In the case when $M$ is a simple manifold. we denote by $\del_-T^1M$ the collection of unit vectors on $\del M$ pointing inside $M$ and $\del_+T^1M$ the collection of unit vectors on $\del M$ pointing outside $M$. Since any maximally extended geodesic in a simple manifold $M$ intersects the boundary transversely and has finite length, there is a one-one correspondence between maximally extended geodesics in $(M,g)$ and the corresponding triples $(v,w,l)\in \del_-T^1M\times \del_+T^1M\times \RR_+$ recording their initial vectors, exit vectors and lengths. Recall that geodesics are length minimizing and do not admit conjugate points when $M$ is simple, we have the following one-one correspondence.
$$\{(p,q)|p\neq q\in \del M\}\leftrightarrow\left\{\begin{array}{l}~~(\pi_-(\dot\gamma(0)),\pi_+(\dot\gamma(l)),l) \\
\in(T\del M)^2\times \RR_+\end{array}\left|\begin{array}{l}\gamma:[0,l]\to M \mathrm{~a~maximally} \\
\mathrm{extended~unit~speed~geodesic;}\\
\pi_\pm:\del_\pm T^1M\to T\del M \\
\mathrm{orthogonal~projection.}\end{array}\right.\right\}:=\mathrm{Lens}(M).$$
We call $\mathrm{Lens}(M)$ the \emph{lens data} of $M$. Similar to the boundary rigidity problem, we have the \emph{lens rigidity} problem which asks whether lens data can determine the manifold up to an isometry. If $M'$ and a simple manifold $M$ have the same boundary and the same boundary distance function, \cite[Lemma 5.1]{Croke1} implies that $\mathrm{Lens(M)}$ and $\mathrm{Lens}(M')$ are canonically identified. Therefore proving boundary rigidity in this case is the same as proving lens rigidity. This observation has been used in many results on boundary rigidity (for example \cite{SUV}.). Unfortunately, having a larger boundary distance function can mess up the lens data, especially the part of data recording initial and exit vectors. (In fact, if we neglect the length part from the lens data, we can still study the corresponding \emph{scattering rigidity} problem. See for example \cite{BGuJ}.) Hence methods using lens data cannot directly apply in a similar way when working on the filling minimality problem.

\subsection{Statements of main results}\label{s1s2}
In this paper, we will generalize Burago and Ivanov's work in \cite{Burago1} and \cite{Burago2} to metrics close to a negatively curved symmetric metric. Notice that negatively curved symmetric metrics come from rank-1 symmetric spaces of non-compact type, we only need to consider metric perturbations of regions in real, complex, quaternionic and Cayley hyperbolic spaces. Let $\KK$ be one of the following 
\begin{enumerate}
\item[(i).]$\CC$, the field of complex numbers;
\item[(ii).]$\HH$, the algebra of all quaternions;
\item[(iii).]$\OO$, the algebra of all octonions.
\end{enumerate}
Since the real hyperbolic case has been proved in \cite[Theorem 1.6]{Burago2}, it remains for us to prove the following theorem.

\begin{thm}\label{main_thm_alt}
Let $M=\KK\mathbf{H}^n\neq\RR\mathbf{H}^n$ be a negatively curved symmetric space, i.e. $M=\CC\mathbf{H}^n$, $\HH\mathbf{H}^n$ or $\OO\mathbf{H}^2$. For any compact region $D\subset M$ (not necessarily simple) with a smooth boundary, there is a $C^r$-neighborhood (for a suitable r) of the symmetric metric on $D$ such that, for every metric $g$ from this neighborhood, the Riemannian manifold $(D,g)$ is a strict minimal filling.
\end{thm}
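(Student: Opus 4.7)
The plan is to adapt the calibration-type argument of Burago--Ivanov \cite{Burago2} to $\KK\mathbf{H}^n$ via a Besson--Courtois--Gallot \cite{BCG1}-style natural map. Fix the symmetric metric $g_0$ on $D\subset M=\KK\mathbf{H}^n$. For any competing filling $(N,h)$ with $\partial N=\partial D$ and $\mathrm{bd}_{(N,h)}\ge\mathrm{bd}_{(D,g)}$, I will construct a smooth map $\Phi_N:N\to M$ that fixes $\partial D$ pointwise, has degree one as a map of pairs $(N,\partial N)\to(D,\partial D)$, and satisfies the pointwise bound $\Phi_N^*\mathrm{vol}_{g_0}\le\mathrm{vol}_h$. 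Combined with the degree-one identity $\int_N\Phi_N^*\mathrm{vol}_{g_0}=\int_D\mathrm{vol}_{g_0}=\vol(D,g_0)$, this gives $\vol(D,g_0)\le\vol(N,h)$ in the unperturbed case; the passage to $g\neq g_0$ will introduce a multiplicative error of size $O(\|g-g_0\|_{C^r})$ that gets absorbed by shrinking the $C^r$-neighborhood.

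To define $\Phi_N$, each $y\in N$ determines the function $d_h(y,\cdot)$ on $\partial D$; using the fixed inclusion $\partial D\hookrightarrow M$, I convert this into a measure $\nu_y$ on the boundary at infinity $\partial_\infty M$ (by pushing forward along a visual map built from $\partial D$ and using the Patterson--Sullivan structure of $\KK\mathbf{H}^n$) and set $\Phi_N(y)$ to be the unique minimizer in $M$ of the energy $z\mapsto\int_{\partial_\infty M}b_\xi(z,o)^2\,d\nu_y(\xi)$, where $b_\xi$ is the Busemann function at $\xi$ and $o$ is a reference point. The hypothesis on boundary distances forces $\Phi_N$ to fix $\partial D$ pointwise, and the BCG Hessian/Jacobian estimate, tailored to the curvature tensor of $\KK\mathbf{H}^n$, yields the required pointwise Jacobian bound. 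Performing the same construction on $(D,g)$ gives a map $\Phi_D$ that is $C^r$-close to the identity once $g$ is $C^r$-close to $g_0$, so its Jacobian differs from $1$ by $o(1)$; this is what will control the perturbative error in the final inequality.

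The main obstacle is verifying the sharp Jacobian bound in the $\KK\neq\RR$ cases and in the boundary-distance (rather than globally defined Patterson--Sullivan) setting. In \cite{BCG1} the Jacobian estimate rests on a pointwise algebraic inequality between two quadratic forms on $T_zM$ whose proof decomposes the curvature operator of $\KK\mathbf{H}^n$ along its $\KK$-line structure; I will carry out the analogous computation here, with the additional care that the measure $\nu_y$ coming from boundary distance data is sufficiently spread on $\partial_\infty M$ for the inequality to hold with the right constant. Once the Jacobian bound is established and the degree of $\Phi_N$ is confirmed (via a continuous deformation through barycenter-type maps interpolating $\Phi_N$ with the identity in the special case $N=D$), strict minimality follows from the BCG rigidity principle: equality in the Jacobian bound on a positive-measure subset of $N$ forces $\Phi_N$ to be a local isometry, and combined with the boundary condition this promotes $\Phi_N$ to a global boundary-preserving isometry $(N,h)\to(D,g)$.
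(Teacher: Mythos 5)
Your overall architecture is genuinely close to the paper's: the composition $P_\sigma\circ\Phi'$ constructed there \emph{is} a barycenter-type natural map built from boundary-distance data, just factored through the distance-preserving embedding $\Phi:M\to\cL=L^\infty(\partial_\infty M)$ (the McShane extension $\Phi'(y)(s)=\inf_{z\in\partial D}\{\Phi(z)(s)+d_h(y,z)\}$ plays the role of your measure $\nu_y$, and $P$ is defined by the vanishing of a weighted average of $d\Phi_s$, i.e.\ a barycenter condition). For $g=g_0$ your claimed sharp Jacobian bound is correct and is exactly Corollary \ref{AE_det_ineq}, proved by the BCG-style comparison of the two quadratic forms $A_\phi$ and $E_\phi$ adapted to the $\KK$-line (resp.\ Cayley-line) structure.

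The genuine gap is your treatment of the perturbation $g\neq g_0$. A Jacobian bound of the form $\Jac\le 1+O(\|g-g_0\|_{C^r})$ gives only $\vol(N,h)\ge(1-O(\epsilon))\vol(D,g)$, and since $\vol(D,g)$ is a fixed positive number this never yields the exact inequality $\vol(N,h)\ge\vol(D,g)$, no matter how small the neighborhood is taken; "absorbing the error by shrinking the neighborhood" does not close the argument. This is precisely the difficulty Sections \ref{s6}--\ref{s7} exist to resolve: one shows the error $A_\phi-\widehat A_\phi$ (and hence the excess in the Jacobian of $P$) is not merely $O(\epsilon)$ but $O(\epsilon\, h^2(\phi))$ where $h(\phi)=\|\phi-\Phi(P(\phi))\|_{L^2}$ measures how far the data point is from being genuine Busemann data of $(M,g)$ (Proposition \ref{A_err}, Proposition \ref{AE_err}), and then one replaces $P$ by the compressed map $P_\sigma=\cQ_\sigma\circ F_\sigma$ whose radial contraction contributes a factor $(1+\sigma h^2)^{-1}$ dominating that error (Lemmas \ref{preparation}, \ref{lem_Rauch}, Proposition \ref{final_prop}). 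Without an analogue of this quadratic-vanishing estimate and compression step, your map $\Phi_N$ has no reason to be volume non-increasing for any $g\neq g_0$. Two smaller points: for an arbitrary filling $(N,h)$ there is no entropy or curvature hypothesis, so the BCG closed-manifold Jacobian estimate cannot be cited as such — one needs the "1-Lipschitz into $\cL$ plus $\Jac_G P_\sigma\le1$" factorization (or an equivalent) to get a bound independent of $h$; and your rigidity step ("equality forces a local isometry") likewise needs the strict version $\Jac\le 1-c\,h^2(\phi)$ to conclude that the image lies in $\Phi(M)$ before one can invoke the 1-Lipschitz volume-preserving rigidity of \cite[Lemma 9.1]{Burago1}.
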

\begin{rmk}
The condition on $g$ being sufficiently close to the symmetric metric appears in several statements throughout this paper. This is given by several complicated constraints on $g$ related to the diameter of the region (with respect to the symmetric metric) and the dimension of the manifold. In particular we assume that $g$ is negatively curved so that $(M,g)$ satisfies the SGM condition. We do not track the number of derivatives required for our arguments to work but we will summarize all constraints on $g$ at the end of this paper. 

\end{rmk}

Since the SGM condition and strict filling minimality imply boundary rigidity as explained right after Conjecture \ref{filling_conj}, we have the following direct corollary of Theorem \ref{main_thm_alt}.

\begin{cor}\label{main_thm}
Under the same assumption as in Theorem \ref{main_thm_alt}, $(D,g)$ is boundary rigid. 
\end{cor}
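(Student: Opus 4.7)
The plan is to deduce the corollary from Theorem \ref{main_thm_alt} via the SGM machinery, exactly as outlined in the paragraph following Conjecture \ref{filling_conj}. Let $M'$ be any Riemannian manifold with $\del M' = \del D$ and $\mathrm{bd}_{M'} = \mathrm{bd}_{(D,g)}$; the goal is to exhibit a boundary-preserving isometry $M' \cong (D,g)$.

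First I would record that $(D,g)$ itself satisfies the SGM condition. The Remark following Theorem \ref{main_thm_alt} guarantees that $g$ is negatively curved, and the discussion immediately after Conjecture \ref{filling_conj} asserts that any compact region with smooth boundary inside a simply connected negatively curved manifold is SGM; a standard extension argument (interpolating $g$ on $D$ with the symmetric metric $g_0$ on $\KK\mathbf{H}^n\setminus D$ through a collar, which preserves negative curvature when $g$ is taken $C^r$-close to $g_0$, and then passing to the universal cover of the extension) supplies the required ambient manifold. With $(D,g)$ now SGM, I invoke Croke's Lemma 5.1 from \cite{Croke1}: the hypothesis $\mathrm{bd}_{M'} = \mathrm{bd}_{(D,g)}$ forces $M'$ to be SGM as well and canonically identifies its lens data with that of $(D,g)$. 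Santal\'o's integral-geometric formula, valid on SGM manifolds and expressing $\vol$ as an integral of lengths of boundary-to-boundary geodesics over the inward unit sphere bundle of the boundary, then yields the volume equality $\vol(M') = \vol(D,g)$.

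Second, since $\mathrm{bd}_{M'} = \mathrm{bd}_{(D,g)}$, we trivially have the inequality $d_{M'}(x,y) \geq d_{(D,g)}(x,y)$ for every $x,y \in \del D$ required by Definition \ref{minimal_filling}. Theorem \ref{main_thm_alt} therefore applies, and the strict-minimal-filling clause upgrades the volume equality established in the previous paragraph to a boundary-fixing isometry $M' \cong (D,g)$, which is exactly what boundary rigidity demands. The only non-bookkeeping input is the deduction of volume equality from the coincidence of boundary distance functions, and this is a standard SGM/Santal\'o consequence that presents no further obstacle once the SGM condition has been verified; in particular the corollary is a short deduction from the main theorem and requires no additional perturbation analysis beyond what is already embedded in the hypotheses of Theorem \ref{main_thm_alt}.
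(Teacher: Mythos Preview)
Your argument is correct and follows essentially the same route as the paper: verify that $(D,g)$ is SGM (using negative curvature and the embedding into a simply connected negatively curved ambient space), invoke Croke's \cite[Lemma 5.1]{Croke1} to get $\vol(M')=\vol(D,g)$, and then apply the strict minimal filling conclusion of Theorem \ref{main_thm_alt}. The paper's version is terser---it cites \cite[Lemma 5.1]{Croke1} directly for the volume equality rather than unpacking the lens-data/Santal\'o step---but the content is the same.
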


\subsection{Plan of the proof}
This paper is a generalization of \cite{Burago2}. While we employ the same general constructions, various proofs become much more complicated for other rank-$1$ symmetric spaces. Therefore we need to improve some technical facts to prove filling minimality and boundary rigidity of almost rank-$1$ symmetric metrics. In Sections \ref{s2}-\ref{s4}, we briefly recall  \cite[Sections 2-4]{Burago2}: Let $(D,g)$ be as in Theorem \ref{main_thm_alt}. We can then extend $g$ smoothly to a metric on $\KK\mathbf{H}^n$ which coincides with the symmetric metric on $\KK\mathbf{H}^n$ (also denoted by $g$ for simplicity) outside a compact neighborhood of $D$.
\begin{enumerate}
\item[(1).] In Section \ref{s2}, we give the proof for Theorem \ref{main_thm_alt}. This uses the following tools discussed in the rest of the paper, namely two maps and a notion of Riemannian structure on an open subset of some $L^\infty$ space.  The first map is a distance-preserving map $\Phi:M\to\cL:=L^\infty(S)$, where $M=(\KK\mathbf{H}^n,g)$ and $S$ is a suitable measure space. (Following the traditions of geometric measure theory, we refer to Lipschitz maps from manifolds of any dimension to a normed space as \emph{surfaces}). Later in Section \ref{s4}, we choose $S$ to be the visual boundary of $M$ with a visual measure and $\Phi$ to be the Busemann function with respect to a fixed point in the interior as in \cite{Burago2}. The other map is a ``projection'' map $P_\sigma$ from a suitable neighborhood of $\Phi(M)\subset \cL$ to $M$ in the sense that $P_\sigma\circ\Phi=\id_M$. We also assume that $P_\sigma$ precomposing any $1$-Lipschitz map $f$ from a $dn$-dimensional manifold does not increase volumes. Moerover, we require that $P_\sigma\circ f$ preserves volumes if and only if the image of $P_\sigma\circ f$ is contained in $\Phi(M)$. If $(D',g')$ shares the same boundary with $(D,g)$ with larger boundary distance function, then we can extend $\Phi|_{\del D}$ to a 1-Lipschitz map $\Phi':(D',g')\to \cL$. Therefore strict filling minimality of $(D,g)$ follows from the above properties of $P_\sigma$.
\item[(2).] In Section \ref{s3}, we introduce the aforementioned ``Riemannian structure'' on a suitable open neighborhood $\cU$ of $\Phi(M)\subset\cL$ so that we can define $dn$-dimensional ($d=2,4,8$ when $\KK=\CC,\HH,\OO$ respectively) Riemannian volumes on $\cU$ and Jacobians for maps involving $\cU$. Under this ``Riemannian structure'', $\Phi$ is volume preserving and any $1$-Lipschitz map from a $dn$-dimensional manifold to $\cU$ is volume non-increasing. This construction is given by introducing a Riemannian metric on $\cU$ satisfying some natural conditions. With the help of this ``Riemannian structure'', it remains for us to find a $P_\sigma$ whose $dn$-dimensional Jacobian is smaller than $1$ on $\cU\setminus\Phi(M)$.

\item[(3).] Section \ref{s4} introduces the construction of $\Phi$ and a ``locally orthogonal projection'' $P:\cU\to M$ defined via a barycenter construction. The aformentioned ``projection'' map $P_\sigma$ will eventually be a small perturbation of this $P$ and therefore we expect $P$ to be almost volume non-increasing. For any $\phi$ in the domain of $P$, a direct computation decomposes the derivative map $d_\phi P$ into two different linear operators $A^{-1}_\phi:T_{P(\phi)}M\to T_{P(\phi)}M$ and $E_\phi:T_\phi\cL\to T_{P(\phi)}M$. Then it remains for us to study the Jacobians of the above two maps. 
\end{enumerate}

In the cases of real hyperbolic spaces discussed in \cite{Burago2}, the operator $A_\phi=\id$. This follows from the formula for the Hessian of the Busemann functions and sectional curvatures being constantly $-1$ (up to scaling). The main technical difficulty in our paper comes from the fact that other non-compact rank-1 symmetric spaces have sectional curvatures ranging from $-4$ to $-1$ (see \cite{BH99}). This leaves $A_\phi$ much more complicated even in the model case when $g$ is symmetric. We resolved this difficulty from the observation that $A_\phi$ and $E_\phi$ have closely related matrix expressions under a suitable choice of basis. The main difference of our proof compared to \cite{Burago2} comes from the remaining technical part of the paper.

\begin{enumerate}
\item[(4).] In Section \ref{s5}, we introduce some notion of ``almost rank-1 structure'' (depending smoothly on $g$) to construct suitable bases convenient for further computations. Then we construct an operator $\widehat{A}_\phi$ to approximate $A_\phi$ by using the data from the Hessian of the Busemann functions in symmetric spaces under these bases. (In the cases of real hyperbolic spaces, $\widehat{A}_\phi=\id$) Moreover, computation shows that $\widehat{A}_\phi$ and $E_\phi$ have matrix representations closely related to a positive definite matrix $Q_\phi$ (see \eqref{big_fat_matrix}). Hence the study of eigenvalues of $Q_\phi$ leads to the desired Jacobian and norm estimates. In particular, we proved that the $dn$-dimensional Jacobian of $\widehat{A}^{-1}_\phi\circ E_\phi$ is bounded above by $1$. This implies that the $dn$-dimensional Jacobian of $P$ is bounded above by $1$ plus some error term. In other words, $P$ is almost volume non-increasing. The Cayley hyperbolic case turns out to be technically more difficult due to the non-associativity of octonionic mulitiplication. See Appendix \ref{appB} for a brief introduction to the Cayley hyperbolic space.
\item[(5).] Section \ref{s6} is the most technical part of this paper. The main goal of this section is to provide a detailed estimate on the error terms $A_\phi-\widehat{A}_\phi$ and $1-\Jac(\widehat{A}^{-1}_\phi\circ E_\phi)$ so that we can construct $P_\sigma$ as a perturbation of $P$ which decreases $dn$-dimensional volumes. A similar version of the statements and proofs introduced in this section also applies to the real hyperbolic cases.
\item[(6).] Section \ref{s7} is similar to \cite[Section 7, a compression trick]{Burago2} which constructs the aforementioned $P_\sigma$ and verifies the required properties introduced in Section \ref{s2} (see Proposition \ref{key_prop}).
\end{enumerate}
\textbf{Acknowledgements}: I would like to heartily thank my advisor Ralf Spatzier for his support during the entire work. I am also very grateful to Chris Connell for helpful and thorough discussions on this subject.

\section{Proof of the theorems}\label{s2}
This section is a review of \cite[Section 2. Proof of the theorems]{Burago2}. The purpose of this section is to prove Theorem \ref{main_thm_alt} and Corollary \ref{main_thm} from Proposition \ref{key_prop}, which is the $\KK\mathbf{H}^n$ version of \cite[Proposition 2.1]{Burago2}. This proposition asserts that we can ``embed'' $\KK\mathbf{H}^n$ into $L^\infty(\del_\infty M)=L^\infty(S^{dn-1})$ ($d=2,4,8$ when $\KK=\CC,\HH,\OO$, respectively) and also ``project'' an open subset of $L^\infty(S^{dn-1})$ to $\KK\mathbf{H}^n$ with certain properties. The rest of the paper provides the technical details of these two maps along with verification of properties.

Let $g_0$ denote the standard metric on $\KK\mathbf{H}^n$ such that sectional curvature on $\KK\mathbf{H}^n$ lies in the interval $[-4,-1]$. Let $D\subset \KK\mathbf{H}^n$ be a region with smooth boundary. Let $g$ be a Riemannian metric on $D$ which is $C^r $-sufficiently close to $g_0|_D$ for a suitable $r$. (See the remark after Proposition \ref{final_prop}.)

Fix a point $x_0\in\KK \mathbf{H}^n$. Let $B_{x_0}(R)$ be the ball of radius $R$ in $\KK \mathbf{H}^n$ centered at $x_0$ with respect to the symmetric Riemannian metric $g_{0}$. Fix an $R>0$ such that $D\subset B_{x_0}(R/5)$. By \cite[Theorem 2.3.6]{Hormander98} the metric $g$ can be smoothly extended from $D$ to $\KK\mathbf{H}^n$ which coincides with $g_0$ outside $B_{x_0}(R/2)$. Moreover, the extension can be constructed in such a way that it converges to $g_0$ as $g$ converges to $g_0|_D$.

We denote the extension by the same letter $g$ and let $M=(\KK\mathbf{H}^n,g)$. We also assume in addition that $g$ is sufficiently close to $g_0$ such that $B_{x_0}(R/5)$ is contained in the ball of radius $R/4$ centered at $x_0$ with respect to the metric $g$. Our goal is to prove that for any region $D\subset B_{x_0}(R/5)$, the space $(D,g)\subset M$ is a minimal filling and is boundary rigid.

Let $S=S^{dn-1}$ and $\cL=L^\infty(S)$. For any $r>0$, let $\cB(r)$ be the ball of radius $r$ in $\cL$ centered at the origin. 

The technical results established in the rest of the paper can be summarized by the following proposition.

\begin{prop}\label{key_prop}
If $g$ is sufficiently close to $g_0$, then there exists a distance preserving map $\Phi: M\to \cL$ such that $\Phi(x_0)=0\in\cL$ and a Lipschitz map
$$P_\sigma:\cB(R)\union\Phi(M)\to M$$
satisfying the following properties
\begin{enumerate}
\item[(1).] $P_\sigma\circ\Phi=\id_M.$
\item[(2).] For every $dn$-dimensional Riemannian manifold $N$ and every 1-Lipschitz map $f:N\to \cB(R)$, the composition $P_\sigma\circ f$  does not increase $dn$-dimensional volumes.
\item[(3).] For any $N$ and $f$ as above, $f(N)\subset \Phi(M)$ provided the composition $P_\sigma \circ  f:N \to M$ preserves volumes of all measurable sets.
\end{enumerate}
\end{prop}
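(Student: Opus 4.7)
The plan is to construct $\Phi$ and $P_\sigma$ explicitly, reduce the three listed properties to a pointwise Jacobian inequality, and then verify that inequality using the machinery of Sections \ref{s3}--\ref{s7}. For $\Phi$ I would take $S = \del_\infty M \cong S^{dn-1}$ with its visual measure based at $x_0$ and set $\Phi(x)(\xi) = b_\xi(x) - b_\xi(x_0)$, the Busemann function at $\xi$ normalized at $x_0$. Since $(M,g)$ is simply connected and negatively curved, each $b_\xi$ is 1-Lipschitz, so $\|\Phi(x) - \Phi(y)\|_\infty \le d(x,y)$; conversely, the asymptotic direction of the geodesic ray from $y$ through $x$ produces a $\xi$ with $b_\xi(x) - b_\xi(y) = d(x,y)$, so $\Phi$ is distance-preserving, with $\Phi(x_0) = 0$ by construction. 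This is the model from \cite{Burago2} for $\RR\mathbf{H}^n$, and its good properties persist when $g$ is $C^r$-close to $g_0$.

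For the projection I would first define a bare map $P : \cU \to M$ on an open neighborhood $\cU \subset \cL$ of $\Phi(M) \cup \cB(R)$ by a barycenter construction in the spirit of Besson-Courtois-Gallot: $P(\phi)$ is the point at which a weighted average of $\grad b_\xi$ against a measure on $S$ determined by $\phi$ vanishes. The defining equation forces $P \circ \Phi = \id_M$, because at $\phi = \Phi(x)$ the integrand is precisely the symmetric expression that vanishes at $x$. Differentiating the implicit relation gives a canonical splitting $d_\phi P = A_\phi^{-1} \circ E_\phi$, with $A_\phi$ a self-adjoint endomorphism of $T_{P(\phi)}M$ built from Busemann Hessians and $E_\phi : T_\phi \cL \to T_{P(\phi)}M$ a bounded linear map (see Section \ref{s4}). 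Because $\Phi$ is $dn$-volume preserving and every 1-Lipschitz map into $\cU$ is volume non-increasing in the Riemannian structure of Section \ref{s3}, properties (2) and (3) of the proposition reduce to the pointwise bound $\Jac(d_\phi P_\sigma) \le 1$ on $\cU$, with equality only on $\Phi(M)$.

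To produce such a bound I would follow the three-step program of Sections \ref{s5}--\ref{s7}. The first step replaces $A_\phi$ by a model operator $\widehat A_\phi$ determined by the Busemann Hessian of the symmetric background metric, presented in the basis supplied by the ``almost rank-1 structure'' of Section \ref{s5}; in that basis $\widehat A_\phi$ and $E_\phi$ both acquire matrix presentations governed by a common positive-definite $Q_\phi$, and an eigenvalue calculation of AM-GM type yields $\Jac(\widehat A_\phi^{-1} \circ E_\phi) \le 1$. Section \ref{s6} then quantifies the operator error $A_\phi - \widehat A_\phi$ and the Jacobian deficit $1 - \Jac(\widehat A_\phi^{-1} \circ E_\phi)$ in terms of the distance from $\phi$ to $\Phi(M)$ and of $g$ from $g_0$. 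Finally, the compression trick of Section \ref{s7} perturbs $P$ to $P_\sigma$ by a radial compression toward $\Phi(M)$ at scale $\sigma$; this leaves $P_\sigma|_{\Phi(M)} = \Phi^{-1}$ intact, so $P_\sigma \circ \Phi = \id_M$ as in (1), while converting the weak bound for $P$ into the strict inequality $\Jac(d_\phi P_\sigma) < 1$ on $\cU \setminus \Phi(M)$ that is needed for (3).

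The main obstacle is the construction and sharp control of $\widehat A_\phi$, together with reducing the Jacobian estimate for $\widehat A_\phi^{-1} \circ E_\phi$ to a tractable matrix inequality. In the real hyperbolic setting one has $A_\phi = \id$ and this step collapses; here the sectional curvature varies between $-4$ and $-1$, so $A_\phi$ is genuinely nontrivial, and the basis that exposes the common matrix $Q_\phi$ must be compatible with the $\KK$-module structure on each tangent space. The Cayley case is the hardest because the non-associativity of $\OO$ blocks any clean $\KK$-linear reduction and forces the approximate framework of Section \ref{s5}. Making the error estimates of Section \ref{s6} tight enough for the compression of Section \ref{s7} to absorb them is the most delicate part of the proof, and is where the smallness of $\|g - g_0\|_{C^r}$ is actually consumed.
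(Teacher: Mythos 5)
Your proposal follows essentially the same route as the paper: the Busemann embedding $\Phi$ of Section \ref{s4}, the barycentric projection $P$ with its splitting $d_\phi P = A_\phi^{-1}\circ E_\phi$, the model operator $\widehat A_\phi$ and the $Q_\phi$-based Jacobian estimates of Sections \ref{s5}--\ref{s6}, and the compression trick of Section \ref{s7} to produce $P_\sigma$ with $\Jac_G P_\sigma \le 1 - ch^2(\phi)$. The only slight imprecision is your description of $P_\sigma$ as a ``radial compression toward $\Phi(M)$'': the construction actually pushes $P(\phi)$ radially outward from $x_0$ by a factor $1+\sigma^3 h^2(\phi)$ and exploits the Rauch comparison (Lemma \ref{lem_Rauch}) to turn the added height dimension in $F_\sigma$ into a volume deficit, but this does not affect the correctness of the outline.
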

\begin{proof}[Proof of Theorem \ref{main_thm_alt} from Proposition \ref{key_prop}]
The proof is the same as the proof of \cite[Theorem 1.6]{Burago2} from \cite[Propsition 2.1]{Burago2}. We will present the proof for reader's convenience.

Let $g$ be sufficiently close to $g_0$ so that the maps $\Phi$ and $P_\sigma$ from Proposition \ref{key_prop} exist. Let $D'$ be a smooth compact manifold with boundary $\del D'=\del D$ and $g'$ be a metric on $D'$ such that
$$d_{(D',g')}(x,y)\geq d_{(D,g)}(x,y),\quad \forall x,y\in \del D.$$
For simplicity we write $M'=(D',g')$. Notice that $(D,g)\subset M$, we have
$$d_{(D,g)}(x,y)\geq d_M(x.y),\quad \forall x,y\in \del D.$$
Therefore 
$$d_{M'}(x,y)\geq d_M(x,y),\quad \forall x,y\in\del D.$$
Since $\Phi$ is distance preserving with respect to $M$, $\Phi|_{\del D}$ is 1-Lipschitz with respect to the metric on $M'$. Therefore we can apply the method in \cite[Proposition 1.6]{Ivanov2} (or \cite[Proposition 4.9]{Burago1}) to construct $\Phi':M'\to \cL$ as an extension of $\Phi|_{\del D}$. Here is an explicit formula for $\Phi'$: 
$$\Phi'(x)(s)=\chi_R(\inf\{\Phi(y)(s)+d_{M'}(x,y):y\in\del D\}),\quad \forall x\in M', s\in S,$$
where $\chi_R:\cL\to\cL$ is a cutoff function given by 
$$\chi_R(\phi)(s)=\min\{R/2,\max\{-R/2,\phi(s)\}\}.$$
(Since we assumed that $D\subset B_{x_0}(R/5)$ and that for any $x\in B_{x_0}(R/5)$, $d_g(x,x_0)<R/4$ at the beginning of this section, the cutoff function does not change anything when $x\in\del D$. Therefore $\Phi|_{\del D}=\Phi'|_{\del D}$) 

Consider a map $\pi=P_\sigma\circ \Phi':M'\to M$. The first assertion in Proposition \ref{key_prop} implies that $\pi|_{\del M'}=\id_{\del D}$, therefore $D\subset \pi(M')$. The second assertion of Proposition \ref{key_prop} implies that $\vol(M')\geq\vol(D,g)$. Therefore $(D,g)$ is a minimal filling.

To prove that $(D,g)$ is a strict minimal filling, suppose that $\vol(M')=\vol(D,g)$. Then $\pi$ is volume-preserving. By the third assertion of Propositon \ref{key_prop} we have $\Phi'(M')\subset \Phi(M)$. Therefore $\pi=\Phi^{-1}\circ\Phi'$. Since $\Phi'$ is 1-Lipschitz and $\Phi$ is distance preserving, $\pi$ is therefore a 1-Lipschitz volume-preserving map. It follows from \cite[Lemma 9.1]{Burago1} that $\pi$ is an isometry. Hence $(D,g)$ is a strict minimal filling.
\end{proof}

\begin{proof}[Proof of Corollary \ref{main_thm}]
The prove is also a $\KK\mathbf{H}^n$ version of \cite[Proof of Theorem 1.3]{Burago2}. For reader's convienience we will present the proof here.

Let $D$ and $g$ be as above, and let $g'$ be a Riemannian metric on $D'$ such that $D'$ and $D$ share the same boundary and $g'$ induces the same boundary distance function as $(D,g)$. By Theorem \ref{main_thm_alt} $(D,g)$ is a strict minimal filling. Hence it suffices to show that  $\vol(D,g)=\vol(D',g')$. Since $D$ is a region in contained in a large ball in $M$, it satisfies the SGM condition introduced by C. B. Croke \cite{Croke1} if $g$ is sufficiently close to $g_0$ (for example, $g$ having negative sectional curvature). Hence $\vol(D,g)=\vol(D',g')$ by \cite[Lemma 5.1]{Croke1}. 
\end{proof}

\section{General setup and computations}\label{s3}

Recall that in Proposition \ref{key_prop} we need existence of an ``embedding'' $\Phi:M\to\cL$ and a ``projection'' $P_\sigma:\cB(R)\union\Phi(M)\to M$ satisfying certain properties. We will adopt the general setup introduced in \cite[Section 3, General computations]{Burago2} in order to help us understand these maps. For reader's convenience, we will list their major concepts and results without proof.

\begin{notation}
In this section, we assume that $(M,g)$ is a $n$-dimensional manifold where any two points are connected by a geodesic realizing the distance. This is always the case when $(M,g)$ is complete. Let $S=S^{n-1}$ and $\cL:=L^\infty(S)$. We equip $S$ with the standard (Haar) probability measure $ds$. In the rest of the paper, we let $L^2(S)=L^2(S,ds)$.

We denote by $T^1M$ the unit tangent bundle of $M$ and by $T^1_xM$ its fiber over $x\in M$. Let $ds_x$ be the standard probability measure on $T^1_xM$ with respect to the Riemannian metric $g$. We will use $ds_x=ds_{x,g}$ for simplicity when there is no ambiguity of metric.

\end{notation}

\begin{dfn}[Special embedding]\label{special_embedding}
A map $\Phi:M\to \cL$ is a \textit{special embedding} if there is a family $\{\Phi_s\}_{s\in S}$ of real-valued functions on $M$ such that the following holds:
\begin{enumerate}
\item[(1).] For every $x\in M$, the image $\Phi(x)$ is a function $s\to\Phi_s(x)$ which belongs to $\cL$.
\item[(2).] The function $(x,s)\to \Phi_s(x)$ is smooth on $M\times S$.
\item[(3).] Every function $\Phi_s:M\to \RR$ is distance-like; that is, $|\grad \Phi_s|\equiv 1$. 
\item[(4).] For every $x\in M$, the map $s\to\grad\Phi_s(x)$ is a diffeomorphism between $S$ and $T^1_xM$.
\end{enumerate}
\end{dfn}
\begin{rmk}
Every special embedding $\Phi$ is a distance preserving map. The third assertion in Definition \ref{special_embedding} implies that $\Phi$ is 1-Lipschitz. To prove that it is distance preserving for all $x,y\in M$, consider a unit speed geodesic $\gamma$ connecting $x$ and $y$. By the fourth assertion in Definition \ref{special_embedding}, there exists some $s\in S$ such that $\grad \Phi_s(x)$ is the initial velocity vector of $\gamma$. Since $\Phi_s$ is distance-like, its gradient curves are geodesics, which implies that $\gamma$ is a gradient curve of $\Phi_s$. Therefore 
$$\Phi_s(y)-\Phi_s(x)=d_M(x,y)$$
and hence $\| \Phi(x)-\Phi(y)\|_{L^\infty}\geq d_{M}(x,y)$. Thus $\Phi$ is distance-preserving.
\end{rmk}
\begin{notation}
Let $\alpha_x:T^1_xM\to S$ be the inverse of $s\to\grad\Phi_s(x)$ and $\alpha:T^1M\to S$ be a map such that $\alpha|_{T^1_xM}=\alpha_x$ for every $x\in M$.

We define a probability measure $d\mu_x$ by the pushforward of the standard probability measure $ds_x$ on $T^1_xM$ to $S$. In other words,
$$d\mu_x=(\alpha_x)_*ds_x.$$
We denote by $\lambda(x,s)$ the density of $d\mu_x$ at $s\in S$ with respect to $ds$. The second and the fourth assertion in Definition \ref{special_embedding} imply that $\lambda:M\times S\to \RR$ is smooth and positive.
\end{notation}
\begin{dfn}[Scalar product, Riemannian metric and special Riemannian metric]\label{fcnal_RM}
A symmetric bilinear form $G$ on $\cL$ is called a \emph{scalar product} on $\cL$ if it is $L^2$-compatible (with respect to $ds$). In other words, there exists some positive constants $c,C$ such that 
$$c\|u\|^2_{L^2(S)}\leq G(u,u)\leq C\|u\|^2_{L^2(S)},\quad \forall u\in\cL.$$
A \emph{Riemannian metric} in an open subset $\cU\subset\cL$ is a smooth family $G=\{G_\phi\}_{\phi\in\cU}$ of scalar products on $\cL$. In other words, for any point $\phi\in\cU$, there is a scalar product $G_\phi$ defined on $T_\phi\cL=\cL$ which depends smoothly on the base point $\phi$.

Let $\Phi:M\to\cL$ be a special embedding and $G$ be a Riemannian metric in an open subset $\cU\subset\cL$ comtaining $\Phi(M)$. We say that $G$ is \emph{special} with respect to $\Phi$ if the following hold:
\begin{enumerate}
\item[(1).] For every $\phi\in\cU$, the scalar product $G_\phi$ has the form
$$G_\phi(X,Y)=n\int_SX(s)Y(s)d\nu_\phi(s),\quad \forall X,Y\in\cL,$$
where $\nu_\phi$ is a probability measure on $S$.
\item[(2).] Every measure $\nu_\phi$ has positive density bounded away from zero with respect to $ds$; these densities depend smoothly on $\phi$.
\item[(3).] If $\phi=\Phi(x)$ for an $x\in M$, then $\nu_\phi=\mu_x$.
\end{enumerate} 
\end{dfn}
\begin{notation}
Let $G$ be a scalar product in $\cL$ and $V$ be a $n$-dimensional Euclidean space. Let $T:\cL\to V$ be a linear map bounded with respect to $G$. Denote by $\Jac_{G,\cW}T$ the Jacobian of $T|_\cW$, where $\cW\subset \cL$ is an arbitary $n$-dimensional subspace. We define Jacobian of $T$ as
$$\Jac_GT:=\sup_{\cW\subset \cL,~\dim(\cW)=n}\Jac_{G,\cW}T.$$

Let $M$ be an arbitary $n$-dimensional Riemannian manifold. For any smooth map $F:\cL\to M$, we denote by $d_\phi F:T_\phi \cL\to T_{F(\phi)}M$ the tangent map of $F$ at $\phi$.

For simplicity, we will use $T_x\Phi=\Phi_*(T_{x}M)$ for any $x\in M$ and $\Phi:M\to\cL$.
\end{notation}
\begin{dfn}[$L^2$-smooth]\label{L2_smooth}
Let $\cU\subset\cL$ be an open subset of $\cL$. We say that a map $P:\cU\to M$ is \emph{$L^2$-smooth} if it is differentiable with respect to the $L^\infty$ structure and its derivative at every point $\phi\in\cU$ can be extended to a bounded linear map from $L^2$ to a fiber of $TM$ which depends smoothly on $\phi$.
\end{dfn}
\begin{dfn}[Projection]\label{Projection}
Let $\Phi:M\to\cU\subset\cL$ be a smooth isometric immersion with respect to a Riemannian metric $G$ on $\cU$. We say that a map $P:\cU\to M$ is a \emph{projection} if it is $L^2$-smooth and satisfies the following two properties.
\begin{enumerate}
\item[(1).] $P\circ\Phi=\id_M$;
\item[(2).] for every $x\in M$, $d_{\Phi(x)}P(V)=0$ for every vector $V\in\cL$ orthogonal (with respect to $G$) to $T_x\Phi$.
\end{enumerate}
\end{dfn}
\begin{prop}\label{crit_jacobian_cond}\cite[Proposition 3.13]{Burago2}
Let $\Phi:M\to\cU\subset\cL$ be a special embedding and $G$ be a Riemannian metric with respect to $\Phi$. Let $P:\cU\to M$ be a projection in the sense of Definition \ref{Projection}. Then for every $x\in M$ and every $V\in\cL$ orthogonal to $T_x\Phi$, we have
$$d_{\Phi(x)}\Jac_GP(V)=0.$$
\end{prop}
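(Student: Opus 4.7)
The plan is to use the smooth representation $\Jac_G P(\phi)^2 = \det K(\phi)$, where $K(\phi) := d_\phi P \circ (d_\phi P)^{*_\phi}$ is an operator on the finite-dimensional space $T_{P(\phi)}M$, with the adjoint taken with respect to $G_\phi$ on the source and the Riemannian metric of $M$ on the target. This representation is manifestly smooth in $\phi$, so the supremum in the definition of $\Jac_G$ poses no differentiability issue. Set $\phi_0 := \Phi(x)$ and $\phi_t := \phi_0 + tV$. The projection property forces $(d_{\phi_0}P)^{*_{\phi_0}} = d_x\Phi$, so $K(\phi_0) = \id_{T_xM}$; moreover, $V \in \ker d_{\phi_0}P$ gives $P(\phi_t) = x + O(t^2)$, which lets me ignore first-order variations of the target metric. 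Jacobi's formula reduces the claim to $\tr K'(0) = 0$. In the orthonormal basis $\{e_i\}$ of $T_xM$ together with the matched basis $\epsilon_i := d_x\Phi(e_i)$ of $T_x\Phi$, writing $K_{ij}(t) = G_{\phi_t}\bigl((d_{\phi_t}P)^{*_{\phi_t}} e_i,\, (d_{\phi_t}P)^{*_{\phi_t}} e_j\bigr)$ and differentiating the adjoint defining relation yields, after cancellation,
\[
\tr K'(0) = 2\sum_i g\bigl(e_i,\, d^2_{\phi_0}P(V, \epsilon_i)\bigr) - \sum_i (d_{\phi_0}G)(V)(\epsilon_i, \epsilon_i).
\]

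The second sum vanishes by the special structure alone. Since each $\Phi_s$ is distance-like, $\sum_i \epsilon_i(s)^2 = |\grad \Phi_s(x)|^2 \equiv 1$ pointwise on $S$; plugging into the form $G_\phi(X,Y) = n\int_S XY\, d\nu_\phi$ and using the probability normalization $\int d\nu_\phi = 1$,
\[
\sum_i (d_{\phi_0}G)(V)(\epsilon_i, \epsilon_i) = n\int_S (\partial_V \lambda)(s)\, ds = n\,\partial_V\!\Bigl(\int_S d\nu_\phi\Bigr)\bigg|_{\phi_0} = 0.
\]

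For the remaining Hessian sum $\sum_i g(e_i, d^2P(V, \epsilon_i))$, I would extend $V$ to a vertical vector field $V(y) \in T_y\Phi^{\perp_{G_{\Phi(y)}}}$ with $V(x) = V$ on a neighborhood of $x$. The projection property at every point of $\Phi(M)$ then gives $d_{\Phi(y)}P(V(y)) \equiv 0$; differentiating in direction $e_k$ at $y = x$ and using symmetry of the Hessian yields $d^2P(V, \epsilon_k) = -d_{\phi_0}P(\partial_{e_k}V)$, so the Hessian sum collapses to $-\sum_k G_{\phi_0}(\epsilon_k, \partial_{e_k}V)$. Next I would differentiate the orthogonality constraint $G_{\Phi(y)}(V(y), d_y\Phi(e_j)) = 0$ in direction $e_k$ (with $\{e_j\}$ a parallel frame at $x$), set $j = k$, and sum over $k$; using $\sum_k \hess\Phi_s(e_k, e_k) = \Delta\Phi_s$, the vanishing reduces to showing that the function
\[
s \mapsto \sum_k \epsilon_k(s)\,\partial_{\epsilon_k}\log\lambda(\phi_0, s) + \Delta\Phi_s(x)
\]
lies in $T_x\Phi \subset \cL$ (equivalently, is $G_{\phi_0}$-orthogonal to $T_x\Phi^{\perp}$). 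This identity is the hard part: it captures the compatibility between the gradient parametrization $s \mapsto \grad\Phi_s(x)$ of $T^1_xM$ and the pushforward measure $\mu_x = (\alpha_x)_* ds_x$, and is essentially the infinitesimal version along $\Phi(M)$ of the change-of-variables formula expressing $\lambda(\Phi(x), s)$ as the reciprocal Jacobian of $\alpha_x$. In the model real hyperbolic case this identity reduces to the classical fact that the Busemann Laplacian and the Poisson kernel derivative cancel exactly.
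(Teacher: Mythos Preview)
The paper does not supply its own proof of this proposition; it is quoted from \cite[Proposition~3.13]{Burago2} as part of the review in Section~\ref{s3}. So there is nothing in the present paper to compare against, and I can only assess your argument on its own merits.

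Your reduction is correct. The identity $\Jac_GP(\phi)^2=\det K(\phi)$ with $K(\phi)=d_\phi P\circ(d_\phi P)^{*_\phi}$ is valid (the supremum in $\Jac_G$ is always realised on $(\ker d_\phi P)^\perp$, which is where the adjoint lands). The claim $(d_{\phi_0}P)^{*_{\phi_0}}=d_x\Phi$ needs not only the projection axioms but also that $\Phi$ is an isometric immersion, which does follow from $G$ being special (via the change of variables $s\mapsto\grad\Phi_s(x)$ and spherical symmetry). Your formula for $\tr K'(0)$ is right, as is the vanishing of the metric-variation term from the probability normalisation $\int_S d\nu_\phi\equiv1$. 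The vertical-extension trick and the differentiation of the orthogonality constraint are also fine.

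The gap is the final identity: you reduce everything to showing that
\[
h(s)\;:=\;\sum_k\epsilon_k(s)\,\partial_{e_k}\log\lambda(x,s)+\Delta\Phi_s(x)\;=\;\partial_{\grad\Phi_s(x)}\log\lambda(x,s)+\Delta\Phi_s(x)
\]
lies in $T_x\Phi$, but then you only verify it in the symmetric model and gesture at a ``change-of-variables'' interpretation. In fact $h\equiv0$ in general, and this is exactly the invariance of the Liouville measure under the geodesic flow. The map $F:M\times S\to T^1M$, $(y,s)\mapsto\grad\Phi_s(y)$, is a fibre-bundle diffeomorphism; it pulls the Liouville measure back to $\lambda(y,s)\,d\mathrm{vol}_M(y)\,ds$ (by definition of $\lambda$), and it conjugates the geodesic flow to the flow of the vector field $X(y,s)=(\grad\Phi_s(y),0)$ on $M\times S$ (because gradient curves of distance-like functions are unit-speed geodesics, and the $s$-coordinate is constant along them). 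Liouville invariance then gives $\mathcal L_X\bigl(\lambda\,d\mathrm{vol}_M\,ds\bigr)=0$, i.e.\ $\partial_{\grad\Phi_s}\lambda+\lambda\,\Delta\Phi_s=0$, hence $h\equiv0$. Adding this one paragraph makes your proof complete.
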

\begin{prop}\label{vol_ineq}\cite[Lemma 3.14, Lemma 3.15]{Burago2}
Let $N$ be a $dn$-dimensional Riemannian manifold (with volume form $d\mathrm{vol}_N$) and $f:N\to\cL$ be a 1-Lipschitz map. Suppose that $G$ is a special Riemannian metric in an open subset $\cU\subset\cL$ with respect to a special embedding $\Phi:M\to\cL$ and $f(N)\subset\cU$.  Assume $P:\cU\to M$ is an $L^2$-smooth map. Then we have the following inequalities
\begin{enumerate}
\item[(1).] $f$ does not increase $n$-dimensional volume. In other words,
$$\vol_G(f):=\int_Nd\mathrm{vol}_{f^*G}\leq\vol(N),$$
where $d\mathrm{vol}_{f^*G}$ denotes the volume form on $N$ with repect to the Riemannian metric $f^*G$.
\item[(2).]  
$$\vol(P\circ f):=\int_N (P\circ f)^*d\mathrm{vol}_{M}\leq \int_N \Jac_GP(f(x))d\mathrm{vol}_{N}(x).$$
\end{enumerate}
\end{prop}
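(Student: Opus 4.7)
The plan is to establish parts (1) and (2) in sequence, both resting on a pointwise ``coordinate-wise'' Riesz representation of $d_xf$.

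For part (1), fix $x\in N$. Because $f$ is 1-Lipschitz into $(\cL, \|\cdot\|_\infty)$, for each $s\in S$ the evaluation composition $f_s := \mathrm{ev}_s\circ f : N\to\RR$ is 1-Lipschitz, so $d_xf_s$ is a linear functional on $T_xN$ of operator norm at most $1$. Riesz representation inside the finite-dimensional Euclidean space $T_xN$ then produces a (measurably chosen) vector $w_s(x)\in T_xN$ with $|w_s(x)|\leq 1$ and $d_xf(V)(s) = \langle w_s(x), V\rangle$ for all $V$. Substituting into the defining formula of a special Riemannian metric, the Gram matrix of $f^*G$ in any orthonormal basis $\{e_i\}$ of $T_xN$ is
\begin{equation*}
((f^*G)_x)_{ij} = n\int_S \langle w_s(x), e_i\rangle\, \langle w_s(x), e_j\rangle \, d\nu_{f(x)}(s),
\end{equation*}
an integral of rank-one positive semidefinite matrices whose trace equals $n\int_S |w_s(x)|^2\,d\nu_{f(x)}(s) \leq n$, using Parseval's identity $\sum_i \langle w_s(x), e_i\rangle^2 = |w_s(x)|^2$, the uniform bound $|w_s(x)|\leq 1$, and the fact that $\nu_{f(x)}$ is a probability measure. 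Since $\dim N = dn \geq n$, the AM--GM inequality applied to the nonnegative eigenvalues gives $\det((f^*G)_x) \leq (n/\dim N)^{\dim N} \leq 1$ pointwise, so $\sqrt{\det((f^*G)_x)}\leq 1$, and integrating over $N$ yields $\vol_G(f) \leq \vol(N)$.

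For part (2), the area formula writes $\vol(P\circ f) = \int_N \Jac(P\circ f)(x)\,d\vol_N(x)$. The chain rule factors $d_x(P\circ f) = d_{f(x)}P \circ d_xf$, and multiplicativity of Jacobians under composition (the rank-deficient case being trivial) gives
\begin{equation*}
\Jac(P\circ f)(x) = \Jac\bigl(d_{f(x)}P|_\cW\bigr) \cdot \sqrt{\det((f^*G)_x)}, \qquad \cW := d_xf(T_xN) \subset \cL.
\end{equation*}
The first factor is at most $\Jac_G P(f(x))$ by the very definition of $\Jac_G$ as a supremum over $n$-dimensional subspaces of $\cL$, while the second is at most $1$ by part (1). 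Integrating produces the desired inequality.

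The delicate point worth highlighting is the interplay between the $L^\infty$-structure on $\cL$ (in which Lipschitz bounds on $f$ live) and the $L^2$-compatible scalar product $G$ (in which volumes are measured). The AM--GM trace estimate in part (1) is precisely what bridges these two norms: the pointwise bound $|w_s(x)|\leq 1$ is an $L^\infty$-type control, but integrated against the probability measure $\nu_{f(x)}$ via Parseval it controls the trace --- and hence, via AM--GM, the determinant --- of the $G$-pulled-back Gram matrix. Once this is in hand, part (2) reduces to the chain rule and the very definition of $\Jac_G$.
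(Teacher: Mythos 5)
Your proof is correct and, as far as I can tell, follows essentially the same route as \cite[Lemmas 3.14--3.15]{Burago2}, which this paper cites without reproducing the argument: a pointwise Riesz representation of $d_xf$, a trace bound on the pulled-back Gram matrix obtained from Parseval together with the $L^\infty$-Lipschitz hypothesis, and AM--GM on the eigenvalues for part (1); then the chain rule for Jacobians together with the variational definition of $\Jac_G$ for part (2). Two small things are being glossed over: the measure-theoretic preliminaries (a jointly measurable representative of $f(x)(s)$, Rademacher a.e.\ differentiability of $f$, and passing the Lipschitz bound $|d_xf(V)(s)|\le|V|$ to a common $s$-null set via a countable dense set of directions before invoking Parseval), and the identity you write, $\vol(P\circ f)=\int_N\Jac(P\circ f)\,d\mathrm{vol}_N$, should really be the inequality $\vol(P\circ f)\le\int_N\Jac(P\circ f)\,d\mathrm{vol}_N$, since the left side is the signed integral of the pullback $(P\circ f)^*d\mathrm{vol}_M$ while $\Jac$ is unsigned --- neither point affects the final conclusion.
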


\section{The construction in $\KK\mathbf{H}^n$}\label{s4}

Recall in Section \ref{s2} we proved Theorem \ref{main_thm_alt} and Corollary \ref{main_thm} assuming Proposition \ref{key_prop}, which asserts the existence of an ``embedding'' map $\Phi$ and a ``projection'' map $P_\sigma$. In this section, we will present the main construction behind these aforementioned maps. We will adopt the same construction as in \cite[Section 4, The construction]{Burago2} for $\KK\mathbf{H}^n$ with a metric $g$ close to the symmetric metric.

To simplify exposition, we do not track the dependence on $g$ and its derivatives in our proof. We say that a dependence on $g$ is \emph{smooth} if for every integer $k>0$ there exists an $r>0$ such that this dependence is $k$-times differentiable with respect to the $C^r$-norm on a neighborhood of $g_0$ in the space of metrics. 

\begin{notation}
Let $B_{x_0}(r)$ the ball of radius $r$ in $\KK\mathbf{H}^n$ centered at $x_0$ with respect to the symmetric metric $g_0$, where $x_0\in\KK\mathbf{H}^n$ is a fixed point. Recall that in Section \ref{s2} we assumed that $D\subset B_{x_0}(R/5)$ with metric $g$ smoothly exteneded to the whole $\KK\mathbf{H}^n$ (also denoted as $g$). Moreover, $g$ is $C^r$ close to $g_0$ and $g\equiv g_0$ outside $B_{x_0}(R/2)$. Let $M=(\KK\mathbf{H}^n,g)$ and $M_0=(\KK\mathbf{H}^n,g_0)$. Notice that $M_0$ and $M$ share the same underlying manifold, we denote by $d_{g_0}$, $d_g$ the distance functions on $M_0$ and $M$ respectively.
 
Since $g$ and $g_0$ coincide outside a compact set, boundaries at infinity for both $M$ and $M_0$ are canonically identified. Let $S=S^{dn-1}=\del_\infty M$ be the boundary at infinity and $\cL:=L^\infty(S)$ as in Section \ref{s2}. For every $s\in S$, we denote by $\Phi_s:M\to\RR$ the Busemann function of a geodesic ray starting at $x_0$ towards $s\in S$. To be more precise, if we let $\gamma(t)$ be the unit speed geodesic such that $\gamma(0)=x_0$ and $\gamma(\infty)=s$, we have
$$\Phi_s(x)=\lim_{t\to\infty}d_g(x,\gamma(t))-t,\quad x\in M,s\in S.$$
We define the ``embedding map'' $\Phi:M\to\cL$ such that
$$\Phi(x)(s)=\Phi_s(x),\quad x\in M, s\in S.$$
\end{notation}

\begin{lem}\label{verification_embedding}
The map $\Phi$ defined above depends smoothly on $x,s$ and $g$. If $g$ is sufficiently close to $g_0$, then $\Phi$ is a special embedding in the sense of Definition \ref{special_embedding}.
\end{lem}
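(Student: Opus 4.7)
The plan is to verify the four conditions of Definition \ref{special_embedding} in turn. Two structural inputs feed every step: for $g$ sufficiently close to $g_0$ in $C^r$ the metric $g$ remains negatively pinched, so $(M,g)$ is a Cartan--Hadamard manifold with a well-defined visual boundary (canonically identified with $S = S^{dn-1}$ inherited from $g_0$, since $g \equiv g_0$ outside $B_{x_0}(R/2)$); and the stable foliation of the geodesic flow of a pinched negatively curved metric has regularity controlled by the smoothness of the metric, which transfers to regularity of the Busemann functions.

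First I would verify that $(x,s) \mapsto \Phi_s(x)$ is smooth and depends smoothly on $g$. Standard negative-curvature geometry shows that the limit defining $\Phi_s$ exists and is smooth in $x$, with level sets the horospheres centered at $s$, which are smooth hypersurfaces in pinched negative curvature. Smoothness in $s$ is the transverse regularity of the stable foliation of the geodesic flow. Smooth dependence on $g$ then follows from smooth persistence of the Anosov geodesic flow on $M_0$ under $C^r$-perturbations together with the implicit-function description of horospheres (one can also use that outside $B_{x_0}(R/2)$ the perturbed and unperturbed Busemann functions differ only by a smooth correction determined by the geodesic flow's behavior inside the perturbation region).

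Next, $|\grad \Phi_s|\equiv 1$. Since each $x \mapsto d_g(x, \gamma(t)) - t$ is $1$-Lipschitz, so is the limit $\Phi_s$, giving $|\grad\Phi_s| \leq 1$. Conversely, the integral curves of $-\grad\Phi_s$ are unit-speed geodesic rays asymptotic to $s$, along which $\Phi_s$ decreases at rate $1$, forcing $|\grad\Phi_s| = 1$ on each such curve; since these curves foliate $M$, equality holds everywhere. For the diffeomorphism condition, the map $s \mapsto \grad\Phi_s(x)$ has inverse essentially the visual map that sends a unit vector $v\in T^1_x M$ to the endpoint at infinity of the geodesic ray going in the direction opposite $v$. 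By Cartan--Hadamard for negatively curved $(M,g)$ this visual map is a bijection, and in the symmetric case it is a smooth diffeomorphism. Being a diffeomorphism between compact manifolds of the same dimension is a $C^1$-open condition, so this property persists for $g$ sufficiently close to $g_0$; smoothness in $g$ was already established above.

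The principal obstacle is tracking the regularity of the Busemann functions under $C^r$-perturbations, since this regularity controls the joint smoothness in $(x,s,g)$. In the symmetric case horospherical coordinates give explicit formulas and smoothness is immediate; in the perturbed case one invokes the regularity theory for the stable foliation of the Anosov geodesic flow, whose degree of smoothness determines the exponent $r$. Consistently with the remark after Theorem \ref{main_thm_alt}, we will not pin down $r$ explicitly and simply take it large enough for all the smoothness statements to hold.
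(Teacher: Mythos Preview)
Your outline covers the four conditions correctly, but the main argument for joint smoothness in $(x,s)$ has a gap. You reduce smoothness in $s$ to ``transverse regularity of the stable foliation of the geodesic flow'' and then invoke Anosov persistence. The trouble is that for a generic $C^\infty$ pinched negatively curved metric the weak stable foliation is \emph{not} $C^\infty$; it is typically only H\"older or $C^{1+\alpha}$, with the exponent governed by bunching rather than by the smoothness of $g$. So no matter how large you take $r$, Anosov regularity theory alone does not deliver $C^\infty$ dependence on $s$. The same objection applies to smooth dependence on $g$: structural stability of Anosov flows gives H\"older continuous conjugacies, not smooth ones.

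The paper sidesteps this entirely by exploiting the hypothesis $g\equiv g_0$ outside $B_{x_0}(R/2)$, which you mention parenthetically but do not develop. Concretely: for any $v\in T^1M$ one has $\alpha_g(v)=\alpha_g(v(-T))=\alpha_{g_0}(v(-T))$ once $T$ is large enough that $v(-T)$ has left the perturbation region, because thereafter the backward geodesic is a $g_0$-geodesic. Since the time-$T$ geodesic flow is $C^\infty$ and $\alpha_{g_0}$ is $C^\infty$ (the symmetric space has explicit smooth horospherical foliation), $\alpha_g$ is $C^\infty$. The Busemann function is then written as $\Phi_s(x)=-\beta(\alpha^{-1}_{x,g}(s)(-T))+\beta(\alpha^{-1}_{x_0,g}(s)(-T))$ for a smooth ``horosphere-height'' function $\beta$ on $T^1M_0$, yielding joint smoothness in $(x,s,g)$ without any foliation regularity theory. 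Your parenthetical remark was pointing in exactly this direction; making it the primary argument closes the gap.
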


We recall some notations introduced in the Section \ref{s3} before we give a proof for the above lemma. 
\begin{notation}
Let $ds_{x,g}$ be the standard probability measure on $T^1_xM$ with respect to the Riemannian metric $g$. 
Let $\alpha_{x,g}:T^1_xM\to S$ be the inverse of $s\to\grad\Phi_s(x)$ and $\alpha_g:T^1M\to S$ satisfying $\alpha_g|_{T^1_xM}=\alpha_{x,g}$ for every $x\in M$. For any $v\in T^1M$, denoted by $v(t)\in T^1M$ the image of $v$ after applying the geodesic flow for time $t$.  Then for any $v\in T^1M$ we have $\alpha_g(v)=v(-\infty)$, the negative infinity endpoint of the geodesic with initial vector $v$.

We define a probability measure $\mu_{x,g}$ by the pushforward of the standard probability measure on $T^1_xM$ to $S$. In other words,
$$\mu_{x,g}=(\alpha_{x,g})_*ds_{x,g}.$$

We denote by $\lambda_g(x,s)$ the density of $\mu_{x,g}$ at $s\in S$ with respect to the Haar measure $ds=(\alpha_{x_0,g_0})_*ds_{x_0,g_0}$. Lemma \ref{verification_embedding} and Definition \ref{special_embedding} imply that $\lambda_g:M\times S\to \RR$ is smooth and positive. For simplicity we will use $ds_x$, $\alpha_x$, $\alpha$, $\mu_x$ and $\lambda(x,s)$ instead of $ds_{x,g}$, $\alpha_{x,g}$, $\alpha_g$, $\mu_{x,g}$ and $\lambda_g(x,s)$ when there is no ambiguity on the choice of the metric $g$.
\end{notation} 

\begin{proof}[Proof of Lemma \ref{verification_embedding}]
Let $H_{s,c}$ be the horosphere in $M_0$ at $s\in S$ such that
\begin{enumerate}
\item[(1).] $H_{s,c}$ is tangent to $B_{x_0}(c)$ with $c>R>0$;
\item[(2).] Any geodesic ray starting at $H_{s,c}$ towards $s$ does not intersect the interior of $B_{x_0}(c)$. 
\end{enumerate}
Since $g$ and $g_0$ coincide on $\KK\mathbf{H}^n\setminus B_{x_0}(R/2)$, horospheres of $M$ contained in $\KK\mathbf{H}^n\setminus B_{x_0}(R/2)$ coincide with those of $M_0$. Therefore, 
\begin{align}\label{buse_formula}
\Phi_s(x)=d_g(x,H_{s,c_1})-d_g(x_0,H_{s,c_2})-c_1+c_2,\quad x\in M, s\in S,c_1,c_2\gg 1.
\end{align}
The first and the third conditions in Definition \ref{special_embedding} follow immediately from the definition of Busemann functions. To verify the second and the fourth conditions we first recall that in the proof of Corollary \ref{main_thm} we assumed that $g$ has negative sectional curvature. Notice that $\alpha_{g_0}$ is smooth, for any $v\in T^1M$, smoothness of the map $\alpha_g(v)=\alpha_g(v(-T))$ follows from choosing arbitarily large $T>0$ and the smoothness of $\alpha_{g_0}$. (This is because $\alpha_g(v(-T))=\alpha_{g_0}(v(-T))=$ when $T\gg 1$.) This verifies the fourth condition in Definition \ref{special_embedding} and also gives a smooth diffeomorphism from $T^1M$ to $M\times S$ by identifying $v\in T_xM$ with $(x,\alpha_g(v))$. Denoted by $\beta:T^1M_0\to  \RR$ a smooth map such that $\beta(v)= c$ for any $v$ orthogonal to $ H_{v(-\infty),c}$. The same map is also defined on those points in $T^1M$ where $g=g_0$. For sufficiently large $T>0$, by \eqref{buse_formula} we have
$$\Phi_{s}(x)=-\beta(\alpha^{-1}_{x,g}(s)(-T))+\beta(\alpha^{-1}_{x_0,g}(s)(-T)),\quad x\in M, s\in S.$$
Since all maps involved in the above formula are smooth, we have $\Phi_s(x):M\times S\to \RR$ is smooth. This verifies the second assertion in Definition \ref{special_embedding}.
\end{proof}

\begin{lem}\label{delta_X_density}
If $g=g_0$, then
$$\lambda(x,s)=e^{-\delta(M_0)\Phi_s(x)}=e^{-(dn+d-2)\Phi_s(x)},\quad\forall x\in M,s\in S,$$
where $\delta(M_0)$ is the volume growth entropy of $M_0$ defined as
$$\delta(M_0)=\overline{\lim}_{r\to \infty}\frac{\ln\left(\vol(\{x\in M_0| d_{M_0}(x,x_0)<r\})\right)}{r}.$$
\end{lem}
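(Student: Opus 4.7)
I plan to derive Lemma \ref{delta_X_density} directly, using the homogeneity of $M_0$ to propagate the normalization at $x_0$ along geodesics and horospheres, and then computing the volume growth entropy from the root-space decomposition of $\Isom(M_0)$.

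First I would identify $\mu_x$ with the standard visual measure at $x$ on $\del_\infty M_0 = S$. Although $\alpha_x$ is built from the backward endpoint $v \mapsto v(-\infty)$, the geodesic flip $v \mapsto -v$ preserves the rotationally-symmetric probability measure $ds_x$ on $T^1_x M_0$ and intertwines the backward and forward endpoint maps; hence $\mu_x = (\alpha_x)_\ast ds_x$ coincides with the usual visual measure at $x$. By the normalization $ds = \mu_{x_0}$ we then have $\lambda(x_0, s) = 1$ for all $s \in S$.

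Next I would show that for every fixed $s \in S$ the product $\lambda(x,s)\,e^{\delta(M_0)\,\Phi_s(x)}$ is independent of $x \in M_0$. It suffices to verify this separately along horospheres at $s$ and along geodesics toward $s$, since these two families together span all tangent directions on $M_0$. Any two points on the same horosphere based at $s$ are interchanged by an isometry of $M_0$ fixing $s$ (the horospherical subgroup $N \subset \Isom(M_0)$ acts transitively on each such horosphere); any such isometry preserves $\Phi_s$ and maps unit tangent spheres isometrically to unit tangent spheres, hence preserves $\lambda(\cdot, s)$. For the geodesic direction, parameterize the ray toward $s$ as $\eta$, so that $\Phi_s(\eta(t)) = \Phi_s(\eta(0)) - t$. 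A standard stable-Jacobi-field calculation then gives
\begin{equation*}
\lambda(\eta(t), s) \;=\; e^{\delta(M_0)\,t}\,\lambda(\eta(0), s),
\end{equation*}
because the visual cone $\alpha_{\eta(t)}^{-1}(U)$ of a small neighborhood $U$ of $s$ expands, relative to $\alpha_{\eta(0)}^{-1}(U)$, by the exponential of $\int_0^t \mathrm{tr}\,\mathrm{Hess}\,\Phi_s$, and in the homogeneous space $M_0$ this integrand equals $\Delta\Phi_s \equiv \delta(M_0)$. Combining the two observations gives $\lambda(x,s)\,e^{\delta(M_0)\Phi_s(x)} \equiv 1$.

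Finally I would verify that $\delta(M_0) = dn + d - 2$ in each of the three cases. With sectional curvatures normalized to $[-4,-1]$, the Iwasawa horosphere in $\KK\mathbf{H}^n$ is a Heisenberg-type nilpotent group whose Lie algebra splits under the geodesic generator into two root spaces of eigenvalues $1$ and $2$, of real dimensions $d(n-1)$ and $d-1$ respectively. Since $\delta(M_0)$ equals the trace of the shape operator of any horosphere, summing with these multiplicities yields $\delta(M_0) = d(n-1) + 2(d-1) = dn + d - 2$, which specializes to $2n$, $4n + 2$, and $22$ in the cases $M_0 = \CC\mathbf{H}^n, \HH\mathbf{H}^n, \OO\mathbf{H}^2$. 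The main obstacle is the stable-Jacobi-field step above, which is classical for negatively curved symmetric spaces but requires careful bookkeeping of the horospherical Jacobian; the rest reduces to reading off the root-space data of $\Isom(M_0)$.
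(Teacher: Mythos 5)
The paper's own proof of this lemma is just a citation to Besson--Courtois--Gallot \cite{BCG1}, so your attempt, which tries to derive the conformal-density identity $\lambda(x,s)=e^{-\delta\Phi_s(x)}$ from scratch via homogeneity, is a genuinely different route. The forward/backward-endpoint symmetry giving $\mu_{x_0}=ds$, the normalization $\lambda(x_0,\cdot)\equiv 1$, and the root-space computation $\delta(M_0)=d(n-1)+2(d-1)=dn+d-2$ (specializing to $2n$, $4n+2$, $22$) are all correct.

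However, the horosphere step has a gap. You claim $n\in N$ (the horospherical subgroup fixing $s$) preserves $\lambda(\cdot,s)$ because it preserves $\Phi_s$ and maps unit tangent spheres isometrically. This does not follow: $\lambda$ is a Radon--Nikodym derivative with respect to the \emph{fixed} reference measure $ds=\mu_{x_0}$, and $n$ moves $x_0$. The chain rule gives
\begin{equation*}
\lambda(nx,s)=\frac{d\mu_{nx}}{d\mu_{nx_0}}(s)\cdot\frac{d\mu_{nx_0}}{d\mu_{x_0}}(s)=\lambda(x,s)\,\lambda(nx_0,s),
\end{equation*}
so your claim is equivalent to $\lambda(nx_0,s)=1$ for all $n\in N$, which is precisely the statement of the lemma on the horosphere $\{\Phi_s=0\}$; as written the step is circular. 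The gap is genuine but closable: identify $T_s\del_\infty M_0$ with $\ffg/\ffp$ (where $\ffp$ is the parabolic subalgebra corresponding to $s$); then the derivative of $n\in N$ at $s$ is the operator induced by $\mathrm{Ad}(n)$, which is unipotent and hence has determinant $1$, giving $\lambda(nx_0,s)=|\det D_s(n^{-1})|=1$. With that inserted, the remaining geodesic step is the standard Jacobi-field computation for the visual density in a negatively curved symmetric space and the proposal is sound.
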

\begin{proof}
 The proof can be found in \cite{BCG1}.
\end{proof}
\begin{rmk}
 For more general $g$, we can assume that $g$ and $g_0$ are sufficiently close such that
\begin{align}\label{density_est}
\frac{1}{2}e^{-(dn+d-2)\Phi_s(x)}\leq\lambda(x,s)\leq 2e^{-(dn+d-2)\Phi_s(x)},\quad\forall x\in B_{x_0}(R_0),s\in S,
\end{align}
for some choice of positive real number $R_0>0$ to be determined. This will be useful in later computations when we choose a specific $R_0$ depending only on $R$ and $n$ to help verifying some properties in our construction.
\end{rmk}

Let $\cB(R)$ be the ball of radius $R$ centered at $0$ in $\cL$ (with respect to $L^\infty$-norm). We define a projection as the following.

\begin{dfn}\label{proj}
Let $\cU$ be a neighborhood of $\Phi(M)\union\cB(R)$ in $\cL$. For any $\phi\in\cU$, we define a map $\Omega_{\phi,g}:M\to T^*M$ as
\begin{align}\label{W_phi}
\Omega_{\phi,g}(x)=\int_Se^{(dn+d)[\Phi_s(x)-\phi(s)]}d\Phi_s(x)d\mu_{x,g}(s).
\end{align}
Let $P:\cU\to M$ be such that $\Omega_{\phi,g}(P(\phi))=0.$
\end{dfn}

We first prove that it is well-defined, which is the $\KK\mathbf{H}^n$ version of \cite[Lemma 4.4]{Burago2}. For reader's convenience, we provide a slightly different proof.

\begin{lem}\label{well_defined_P}
If $g$ is sufficiently close to $g_0$, then there exists a smooth map $P$ satisfying Definition \ref{proj} such that
\begin{enumerate}
\item[(1).]$P(\Phi(x))=x$ for all $x\in M$;
\item[(2).]There exists some constant $R_1=R_1(n,R)$ depending only on $n$ and $R$ such that for any $\phi\in\cB(R)$, $P(\phi)\in B_{x_0}(R_1)$ and $d_g(P(\phi),x_0)\leq R_1$. 
\end{enumerate}
Hence as a direct corollary of the second assertion, $\Phi(P(\phi))\in\cB(R_1)$ for any $\phi\in \cB(R)$.
\end{lem}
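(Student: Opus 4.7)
The plan is to realize $P(\phi)$ as the unique minimizer of a strictly convex, coercive energy on $M$ in the model case $g=g_0$, and transfer the picture to nearby metrics by the implicit function theorem. By Lemma \ref{delta_X_density}, $\lambda(x,s)=e^{-(dn+d-2)\Phi_s(x)}$ when $g=g_0$, so
\begin{align*}
\Omega_{\phi,g_0}(x)=\int_S e^{2\Phi_s(x)-(dn+d)\phi(s)}\, d\Phi_s(x)\, ds=\tfrac{1}{2}\, d_x\cE_\phi(x),
\end{align*}
where $\cE_\phi(x):=\int_S e^{2\Phi_s(x)-(dn+d)\phi(s)}\, ds$. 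Zeros of $\Omega_{\phi,g_0}$ are therefore critical points of $\cE_\phi$.

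I would first establish strict convexity of $\cE_\phi$. A direct computation gives
\begin{align*}
\hess\bigl(e^{2\Phi_s}\bigr)=2e^{2\Phi_s}\bigl(\hess\Phi_s+2\, d\Phi_s\otimes d\Phi_s\bigr).
\end{align*}
Because $\Phi_s$ is a Busemann function in the negatively curved $M_0$, $\hess\Phi_s$ is positive semidefinite with its single null direction $\grad\Phi_s$, and the rank-one term $d\Phi_s\otimes d\Phi_s$ is positive in exactly that direction, so the sum is positive definite. Integrating in $s$ against a positive density preserves strict convexity, so $\cE_\phi$ is strictly convex on $M_0$. Coercivity follows from a basic Busemann estimate in rank-one symmetric spaces: there exist constants $c_0,c_1>0$ depending only on $n$ such that $\{s\in S:\Phi_s(x)\geq c_0\, d_{g_0}(x,x_0)\}$ has visual measure $\geq c_1$. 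For $\phi\in\cB(R)$ this gives $\cE_\phi(x)\geq c_1 e^{2c_0 d_{g_0}(x,x_0)-(dn+d)R}$, while $\cE_\phi(x_0)\leq e^{(dn+d)R}$, so the unique minimizer $x^\ast$ must satisfy $d_{g_0}(x^\ast,x_0)\leq R_1$ for some explicit $R_1=R_1(n,R)$. This defines $P(\phi)$ when $g=g_0$.

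For $g$ close to $g_0$, $\Omega_{\phi,g}$ is no longer an exact gradient, but \eqref{density_est} together with smoothness of the visual measure in $g$ imply that $\Omega_{\phi,g}$ and $D_x\Omega_{\phi,g}$ are $C^r$-close to their $g_0$ counterparts on $B_{x_0}(R_1+1)$, uniformly in $\phi\in\cB(R)$. Hence $D_x\Omega_{\phi,g}$ remains positive definite, and the implicit function theorem produces a unique smooth zero in that ball; the coercivity argument carries over through \eqref{density_est} to prevent the zero from escaping, giving $P(\phi)\in B_{x_0}(R_1)$ and $d_g(P(\phi),x_0)\leq R_1$ after possibly enlarging $R_1$. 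The identity $P(\Phi(x))=x$ is then immediate: for $\phi=\Phi(x)$ the exponent vanishes identically, and
\begin{align*}
\Omega_{\Phi(x),g}(x)=\int_S d\Phi_s(x)\, d\mu_{x,g}(s)=\int_{T^1_xM} v^\flat\, ds_{x,g}(v)=0
\end{align*}
by central symmetry of the round measure on $T^1_xM$, so uniqueness of the zero forces $P(\Phi(x))=x$.

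The main technical obstacle is making the perturbation estimate genuinely uniform in $\phi\in\cB(R)$: one needs $D_x\Omega_{\phi,g}$ to stay uniformly positive definite on the whole ball $B_{x_0}(R_1+1)$, not merely at the critical point, so that both the implicit function theorem and the "no escape" argument apply simultaneously for every $\phi\in\cB(R)$. This is what dictates how small the $C^r$-distance from $g$ to $g_0$ must be in terms of $n$ and $R$, exactly the sort of constraint the author promises to collect at the end of the paper.
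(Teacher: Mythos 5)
Your proof and the paper's share the same two ingredients --- the BCG-type strict positivity of the ``Hessian'' of the Busemann potential (for uniqueness) and a shadow/coercivity estimate (for confinement to a bounded ball) --- but you organize them differently. In the model case $g=g_0$ you identify $\Omega_{\phi,g_0}$ explicitly as $\frac{1}{2}d_x\cE_\phi$ for the strictly convex, coercive potential $\cE_\phi(x)=\int_S e^{2\Phi_s(x)-(dn+d)\phi(s)}\,ds$, take $P(\phi)$ to be its unique minimizer, and then transfer to nearby $g$ by the implicit function theorem in the metric parameter. The paper instead works directly with a general $g$ close to $g_0$ and establishes existence without a scalar potential by a Lyapunov-type trick: once $\nabla\Omega_{\phi,g}\geq r_0\,g$ on a large ball, the function $\cF=|\cX_{\phi,g}|_g^2$ attains an interior minimum at some $x_{\min}\in B_{x_0}(R_1)$, and $0=\cX_{\phi,g}\cF(x_{\min})\geq 2r_0|\cX_{\phi,g}|^2_g(x_{\min})$ forces a zero there. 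Your variational route is cleaner in the $g_0$ model; the paper's avoids deforming from $g_0$ at the existence step. The verification of $P(\Phi(x))=x$ via spherical symmetry is identical.

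There is one real gap you should address. Definition \ref{proj} asks for $P$ on a neighborhood $\cU$ of $\Phi(M)\cup\cB(R)$ in $\cL$, and $\Phi(M)$ is unbounded, so the lemma requires a construction near all of $\Phi(M)$, not just on $\cB(R)$. Your coercivity bound $\cE_\phi(x_0)\leq e^{(dn+d)R}$ is tied to $\phi\in\cB(R)$ and does not control the situation when $\phi$ is near $\Phi(x)$ with $d(x,x_0)$ large. The paper handles this separately by the substitution $\psi=E(\phi)=e^{-(dn+d)\phi}$, rewriting $\Omega_{\phi,g}(x)=0$ as $\cE(x,\psi)=0$ with $\psi\in L^2(S)$, and applying the implicit function theorem near $E(\Phi(M))$; you would need an analogous step before the $\cB(R)$ extension. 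A second, smaller omission: for $g\neq g_0$ the one-form $\Omega_{\phi,g}$ is no longer a gradient, so positive definiteness of $\nabla\Omega_{\phi,g}$ gives \emph{global} uniqueness only after observing that $\Omega_{\phi,g}(\dot\gamma(t))$ is strictly increasing along geodesics in the convex ball $B_{x_0}(\widetilde R_0)$ (the paper spells this out); the implicit function theorem by itself is purely local and cannot rule out a second zero elsewhere in the ball.
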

\begin{proof}
If $\phi=\Phi(x)$, then we define $P(\phi)=x$ and it satisfies the requirements in Definition \ref{proj}. In the rest of the proof we extend $P$ to a neighborhood of $\Phi(M)$ containing $\cB(R)$.

Consider a map $E:\cL\to L^2(S)$ given by
$$E(\phi)=e^{-(dn+d)\phi(s)}.$$
Let $\phi\in\cL$ and $\psi=E(\phi)$. Then the equation $\Omega_{\phi,g}(x)=0$ takes the form
$$\int_S\psi(s)e^{(dn+d)\Phi_s(x)}d\Phi_s(x)d\mu_x(s)=0.$$
Notice that when $\phi=\Phi(x)$, spherical symmetry implies that
$$\int_SE\circ\Phi_s(x)e^{(dn+d)\Phi_s(x)}d\Phi_s(x)d\mu_x(s)=0.$$
Hence the equation $\Omega_{\phi,g}(x)=0$ is equivalent to the following
$$\int_S\left(\psi(s)-E\circ\Phi_s(x)\right)e^{(dn+d)\Phi_s(x)}d\Phi_s(x)d\mu_x(s)=0,$$
which is equivalent to 
$$\int_S\left(\psi(s)-E\circ\Phi_s(x)\right)e^{2(dn+d)\Phi_s(x)}d_x(E\circ\Phi)(s)d\mu_x(s)=0.$$
Define $\cE:M\times L^2(S)\to T^*M$ as 
$$\cE(x,\psi)=\int_S\left(\psi(s)-E\circ\Phi_s(x)\right)e^{2(dn+d)\Phi_s(x)}d_x(E\circ\Phi)(s)d\mu_x(s).$$
By the Implicit Function Theorem applied to $\cE(x,\psi)=0$, there exists a smooth map $\widetilde P:\widetilde\cU\to M$ defined on a neighborhood $\widetilde\cU$ of $E(\Phi(M))$ such that $\cE(\widetilde P(\psi),\psi)=0$. Therefore we can extend $P$ to $E^{-1}(\widetilde\cU)$ by setting $P=\widetilde P\circ E$.

It remains for us to extend $P$ to $\cB(R)$. Let 
$$\omega_g(x)=e^{(dn+d)\Phi_s(x)}\lambda_g(x,s)d\Phi_s(x)\in T^*_xM$$
and hence 
$$\Omega_{\phi,g}(x)=\int_SE(\phi(s))\omega_g(x)ds.$$

When $g=g_0$, Lemma \ref{delta_X_density} implies that $\omega_g=d\left(e^{2\Phi_s}/2\right)$. A classic result from \cite{BCG1} (to be more precise, the first assertion of Lemma \ref{hess}) implies that
$$\nabla\omega_{g_0}>e^{2\Phi_s}g_0.$$
Hence
$$\nabla \Omega_{\phi,g_0}\geq\int_SE(\phi(s))e^{2\Phi_s}g_0ds>0.$$
For general $g$ close to $g_0$, we denote the induced quadratic form of $\nabla\omega_g$ and $\nabla\Omega_{\phi,g}$ by the same notations. Similar to the remark for Lemma \ref{delta_X_density}, we can assume that $g$ is sufficiently close to $g_0$ in the sense that 
\begin{align}\label{cond_on_g_2}
\nabla \omega_g\geq\frac{1}{2}e^{2\Phi_s} g, \quad \forall x\in B_{x_0}(\widetilde R_0)
\end{align}
for some choice of $\widetilde R_0=\widetilde R_0(n,R)>2R>0$ which will be determined in later part of this proof. Notice that $B_{x_0}(r)$ is convex in $M$ when $r>2R$. Then
$$
\nabla \Omega_{\phi,g}|_x\geq\frac{1}{2}\int_SE(\phi(s))e^{2\Phi_s(x)}g|_xds>0,\quad \forall x\in B_{x_0}(\widetilde R_0).
$$
Therefore for any unit speed geodesic segment $\gamma$ in $ B_{x_0}(\widetilde R_0)$, the function $\Omega_{\phi,g}(\dot\gamma(t))$ has derivative equal to $\nabla\Omega_{\phi,g}(\dot\gamma,\dot\gamma)$ and hence is strictly increasing, which implies that $\Omega_{\phi,g}=0$ has at most one solution in $B_{x_0}(\widetilde R_0)\subset M$. Moreover, since $\nabla \Omega_{\phi,g}$ is non-degenerate, we can apply the Implicit Function Theorem to $\Omega_{\phi,g}=0$, which proves that we can extend $x=P(\phi)$ smoothly if the equation $\nabla \Omega_{\phi,g}=0$ has a solution for any $\phi\in \cB(R)$.

To prove the existence of such a solution, we first claim that 
$\inf_{x\not\in B_{x_0}(r)}\int_Se^{2\Phi_s(x)}ds\to\infty$ as $r$ tends to infinity (independent of the choice of $g$). Define
$$\shadow_g(N,p)=\{s\in S|~\mathrm{Im}\gamma_{s,p}\ints N\neq\emptyset\},\quad N\subset M, p\in M\sqcup S,$$
where $\gamma_{s,p}$ denotes the geodesic ray (with respect to $g$) starting at $p$ towards $s$.
Since in Lemma \ref{verification_embedding} we assumed that $g$ is negatively curved, for any $r>R/2$ and any $x\in \KK\mathbf{H}^n\setminus B_{x_0}(R/2)$, we have $\shadow_g(B_{x_0}(r),x)=\shadow_{g_0}(B_{x_0}(r),x)$. Define
$$D_x=\shadow_{g_0}(B_{x_0}(2R),x)\setminus\shadow_{g_0}(B_{x_0}(R),x)\subset S,\quad\forall x\in M\sqcup S.$$
Notice that for any $r>R>0$ and any $s\in S$, the set 
$$\shadow_{g_0}(B_{x_0}(r),s):=\lim_{x\to s}\shadow_{g_0}(B_{x_0}(r),x)$$
has a fixed positive area only depending on $r$ and $n$ with respect to the Haar measure $ds$ on $S$. Moreover, the area of $\shadow_{g_0}(B_{x_0}(r),x)$ only depends on $d_{g_0}(x,x_0)$ for any $x\in M\sqcup S$. Hence in particular $\mathrm{Area}(D_s,ds)$ equal to some positive constant $I(n,R)>0$ for any $s\in S$ and there exists a continuous non-negative function $\cA:\RR_{\geq 0}\to \RR_{\geq 0}$ such that $\mathrm{Area}(D_x,ds)=\cA(d_{g_0}(x,x_0))\to I(n,R)$ as $d_{g_0}(x,x_0)\to\infty$. Notice that $\Phi_s(x)\geq d_g(x,x_0)-2\mathrm{diam}_g(B_{x_0}(2R))$ for any $s\in \shadow_{g_0}(B_{x_0}(2R),x)$ and any $x\in M$. Without loss of generality we can assume that $g$ is sufficiently close to $g_0$ such that $\mathrm{diam}_g(B_{x_0}(2R))<5R$. Then we have
$$\int_Se^{2\Phi_s(x)}ds\geq \cA(d_{g_0}(x,x_0))e^{2(d_g(x,x_0)-10R)}\geq \cA(d_{g_0}(x,x_0))e^{2(d_{g_0}(x,x_0)-12R)},\quad\forall x\in M,$$
which proves that $\inf_{x\not\in B_{x_0}(r)}\int_Se^{2\Phi_s(x)}ds\to\infty$ as $r$ tends to infinity. Therefore 
$$\inf_{x\not\in B_{x_0}(r)}\int_SE(\phi(s))e^{2\Phi_s(x)}ds\geq e^{-(dn+d)R}\inf_{x\not\in B_{x_0}(r)}\int_Se^{2\Phi_s(x)}ds\geq \widehat{\cA}(n,R,r)\to\infty$$
as $r$ tends to infinity, where $\widehat{\cA}(n,R,\cdot)$ is a positive continuous function defined on $\RR_{\geq 0}$ depending on $n$ and $R$. Hence there exists some constant $r_0=r_0(n,R)>0$ such that 
$$\nabla \Omega_{\phi,g}\geq r_0g,\quad\forall x\in B_{x_0}(\widetilde R_0)$$
for $\widetilde R_0>0$ as in \eqref{cond_on_g_2}. Let $\cX_{\phi,g}$ be the dual of $\Omega_{\phi,g}$ with respect to the metric $g$ and define a function $\cF=\left|\cX_{\phi,g}\right|_g^2=g(\cX_{\phi,g},\cX_{\phi,g})$ on $M$. Notice that $\phi\in\cB(R)$ implies $\cF(x_0)\leq e^{2(dn+d)R}$ and that 
$$\sqrt{\cF(\gamma(t))}\geq\left|\frac{g(\cX_{\phi,g},\dot\gamma(t))}{\|\dot\gamma(t)\|_g}\right|=|\Omega_{\phi,g}(\dot\gamma(t))|\geq\int_{0}^t\nabla\Omega_{\phi,g}(\dot\gamma(\tau),\dot\gamma(\tau))d\tau-|\Omega_{\phi,g}(\dot\gamma(0))|,$$ 
where $\gamma$ is any unit speed geodesic segment in $M$. Choose $R_1(n,R)=2e^{2(dn+d)R}/r_0>0$ and $g$ sufficiently close to $g_0$ such that $\widetilde R_0=R_1+1$, we have

$$\cF(x)>\cF(x_0),\quad\forall x\in B_{x_0}(\widetilde R_0)\setminus B_{x_0}(R_1).$$
Hence $\cF|_{B_{x_0}(\widetilde R_0)}$ achieves minimum at some point $x_{\min}\in B_{x_0}(R_1)$. In particular
\begin{align*}
0=&\cX_{\phi,g}\cF(x_{\min})=2g\left(\nabla_{\cX_{\phi,g}}\cX_{\phi,g},\cX_{\phi,g}\right)
=2\left(\nabla \Omega_{\phi,g}\right)\left(\cX_{\phi,g},\cX_{\phi,g}\right) 
\geq2r_0g(\cX_{\phi,g},\cX_{\phi,g}),
\end{align*}
which implies that $\cX_{\phi,g}(x_{\min})=0$. Hence $P$ is well-defined and smooth. 
\end{proof}

Before we state Lemma \ref{AE_decomp}, we first introduce some notations.

\begin{notation}
 For any $x\in B_{x_0}(\widetilde R_0)$ and $s\in S$, we define a linear operator $A_{x,s}:T_xM\to T_xM$ by 
\begin{align}\label{Axs}
A_{x,s}(\xi)=e^{-(dn+d)\Phi_s(x)}\lambda(x,s)^{-1}\nabla_\xi[e^{(dn+d)\Phi_s(x)}\lambda(x,s)\grad\Phi_s(x)],
\end{align}
where $\nabla_\xi$ denotes the Levi-Civita derivative along $\xi$. 

Recall that in the previous proof, we set
$$\omega_g(x)=e^{(dn+d)\Phi_s(x)}\lambda(x,s)d\Phi_s(x)\in T^*_xM.$$
Hence for any $g$ sufficiently close to $g_0$ in the sense of \eqref{cond_on_g_2} and $0\neq\xi\in T_xM$, we have
\begin{align}\label{invertibility_Axs}
\langle A_{x,s}(\xi),\xi\rangle=e^{-(dn+d)\Phi_s(x)}\lambda(x,s)^{-1}(\nabla\omega_g)(\xi,\xi)>0.
\end{align}

Let $\phi\in\cL$ and $x=P(\phi)$. Denoted by $\rho_\phi$ a function  on $S$ such that
\begin{align}\label{rho_phi}
\rho_\phi(s)=e^{(dn+d)[\Phi_s(x)-\phi(s)]}
\end{align}
Let $\overline\rho_\phi$ be the same function normalized with respect to the measure $\mu_x$. In other words,
\begin{align}\label{rho_normalized}
\overline\rho_\phi=\frac{\rho_\phi}{\int_S\rho_\phi d\mu_x}.
\end{align}

Assuming $x=P(\phi)\in B_{x_0}(\widetilde R_0)$, we define a linear operator $A_\phi:T_xM\to T_xM$ by
\begin{align}\label{A}
A_\phi=\int_S\overline\rho_\phi(s)A_{x,s}d\mu_x(s),
\end{align}
which is invertible due to \eqref{invertibility_Axs}.
\end{notation}
\begin{lem}\label{AE_decomp}
Let $\phi\in\cU$ and $x=P(\phi)$. Then differentiating $P$ yields
$$d_\phi P=A^{-1}_\phi\circ E_\phi,$$
for any $\phi$ such that $x\in B_{x_0}(\widetilde R_0)$. The linear map $E_\phi:\cL\to T_xM$ is given by
\begin{align}\label{E}
E_\phi(X(s))=(dn+d)\int_SX(s)\overline\rho_\phi(s)\grad \Phi_s(x)d\mu_x(s)
\end{align}
for any $X(s)\in T_\phi\cL=\cL$.
\end{lem}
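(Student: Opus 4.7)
The plan is to use implicit differentiation on the defining equation $\Omega_{\phi,g}(P(\phi)) = 0$. Fix $\phi \in \cU$ with $x = P(\phi) \in B_{x_0}(\widetilde R_0)$, pick $X \in \cL$, and set $\phi_t := \phi + tX$, $x_t := P(\phi_t)$. I differentiate the identity $\Omega_{\phi_t,g}(x_t) = 0$ at $t = 0$; because $\Omega_{\phi,g}(x) = 0$, this $t$-derivative of a section of $T^*M$ lies canonically in the fiber $T^*_x M$ (independent of the connection used to split vertical/horizontal) and splits as
\[
0 \;=\; \nabla_\xi \Omega_{\phi,g}|_x \;+\; \partial_t|_{t=0}\Omega_{\phi_t,g}(x), \qquad \xi := d_\phi P(X).
\]

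The $\phi$-derivative piece is immediate: only the factor $e^{-(dn+d)\phi(s)}$ depends on $t$, producing $-(dn+d)\int_S X(s)\,\rho_\phi(s)\,d\Phi_s(x)\,d\mu_x(s)$. For the covariant derivative in $x$, I write $d\mu_x(s) = \lambda(x,s)\,ds$, pull the $\phi$-independent factor $e^{-(dn+d)\phi(s)}\,ds$ out of $\nabla_\xi$, and read off from the definition \eqref{Axs} of $A_{x,s}$ that
\[
\nabla_\xi\bigl[e^{(dn+d)\Phi_s(x)}\lambda(x,s)\,d\Phi_s(x)\bigr] \;=\; e^{(dn+d)\Phi_s(x)}\lambda(x,s)\,(A_{x,s}\xi)^{\flat},
\]
where $\flat$ denotes the musical isomorphism with respect to $g$ (and commutes with $\nabla$). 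Hence $\nabla_\xi \Omega_{\phi,g}|_x = \int_S \rho_\phi(s)\,(A_{x,s}\xi)^{\flat}\, d\mu_x(s)$. Combining the two pieces, dividing by the positive normalizing constant $\int_S \rho_\phi\, d\mu_x$ to convert $\rho_\phi$ into $\overline\rho_\phi$, and then dropping the $\flat$ by evaluating both sides against arbitrary tangent vectors at $x$, yields precisely $A_\phi(\xi) = E_\phi(X)$. The estimate \eqref{invertibility_Axs}, together with positivity of $\overline\rho_\phi$ and $\mu_x$, makes $A_\phi$ a positive (in particular invertible) endomorphism of $T_xM$, so $\xi = A_\phi^{-1} E_\phi(X)$, which is the claimed identity.

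The only genuinely subtle point is the first one: interpreting the simultaneous derivative of $\Omega_{\phi,g}$ in $\phi$ and in the base point $x$ as a well-defined element of $T^*_x M$, which works precisely because we are at a zero of the section. Everything else is bookkeeping --- smoothness of $\Phi$, $\lambda$, and the integrands is already established and justifies differentiation under the integral sign without further comment, and the appearance of $A_{x,s}$ in the formula is built into its definition.
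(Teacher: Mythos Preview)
Your proof is correct and follows essentially the same approach as the paper: implicit differentiation of $\Omega_{\phi,g}(P(\phi))=0$, splitting into the $\phi$-variation (giving $E_\phi$) and the base-point variation (giving $A_\phi$), then invoking invertibility of $A_\phi$. The only cosmetic difference is that the paper pairs $\Omega_{\phi,g}$ against an arbitrary test vector field $\widetilde\eta$ to sidestep the connection issue, whereas you invoke directly that the derivative of a section at a zero is canonically fiber-valued; these are equivalent ways of handling the same point.
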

\begin{proof}
By Definition \ref{proj}, We have
$$\Omega_{\phi,g}(P(\phi))=0,\quad\forall \phi\in\cU.$$
Let $x=P(\phi)$. For any $X\in T_\phi\cL=\cL$, $\xi=d_\phi P(X)\in T_xM$ and any vector field $\widetilde\eta$ on $M$, we differentiate the above equation evaluated at $\widetilde\eta$ and obtain

$$\int_S D_\phi(\rho_\phi(s))(X)\langle\grad\Phi_s(x),\widetilde\eta \rangle d\mu_x(s)+\xi\left[\Omega_{\phi,g}(P(\phi))(\widetilde\eta)\right]=0.$$
Notice that 
$$\xi\left[\Omega_{\phi,g}(P(\phi))(\widetilde\eta)\right]=\xi\langle\cX_{\phi,g},\widetilde\eta \rangle=\langle\nabla_\xi\cX_{\phi,g},\widetilde\eta\rangle+\langle\cX_{\phi,g},\nabla_\xi\widetilde\eta\rangle,$$
where $\cX_{\phi,g}$ is the dual of $\Omega_{\phi,g}$ with respect to $g$ as in the proof of Lemma \ref{well_defined_P} and hence vanish at $P(\phi)$.
By arbitariness of $\widetilde\eta$, we have
\begin{align}\label{DW}
\int_S D_\phi(\rho_\phi(s))(X)\grad\Phi_s(x)d\mu_x(s)+\nabla_{\xi} \cX_{\phi,g}=0,
\end{align}
 By \eqref{rho_phi}, \eqref{rho_normalized} and \eqref{E}, the first term in \eqref{DW} takes the form
\begin{align*} 
&\int_S D_\phi(\rho_\phi(s))(X)\grad\Phi_s(x)d\mu_x(s) \\
=&-(dn+d)\int_SX(s)\rho_\phi(s)\grad\Phi_s(x)d\mu_x(s) 
=-\left(\int_S\rho_\phi(s)d\mu_x(s)\right)E_\phi(X).
\end{align*}
A direct computation yields
\begin{align*}
\nabla_\xi \cX_{\phi,g} 
=&\int_S e^{-(dn+d)\phi(s)}\nabla_\xi\left[e^{(dn+d)\Phi_s(x)}\lambda(x,s)\grad\Phi_s(x)\right]ds \\
=&\int_S\rho_\phi(s)e^{-(dn+d)\Phi_s(x)}\nabla_\xi\left[e^{(dn+d)\Phi_s(x)}\lambda(x,s)\grad\Phi_s(x)\right]ds \\
=&\int_S\rho_\phi(s)e^{-(dn+d)\Phi_s(x)}\lambda(x,s)^{-1}\nabla_\xi\left[e^{(dn+d)\Phi_s(x)}\lambda(x,s)\grad\Phi_s(x)\right]d\mu_x(s) \\
=&\int_S\rho_\phi(s)A_{x,s}(\xi)d\mu_x(s) 
=\left(\int_S\rho_\phi(s)d\mu_x(s)\right)A_\phi(\xi),
\end{align*}
where the last two equalities follows from our notations in \eqref{Axs} and \eqref{A}.

Summarizing the above, \eqref{DW} implies that
$$A_\phi(\xi)=E_\phi(X).$$
Therefore,
$$d_\phi P(X)=\xi= A_\phi^{-1}\circ E_\phi(X).$$
By the arbitariness of $X$, we have
\begin{align*}d_\phi P=A_\phi^{-1}\circ E_\phi.\tag*{\qedhere}\end{align*}
\end{proof}

\begin{dfn}\label{RML}
Let $G$ be a Riemannian metric on $P^{-1}(B_{x_0}(\widetilde R_0))$ such that for any $\phi \in  P^{-1}(B_{x_0}(\widetilde R_0))$, the scalar product $G_\phi$ on $T_\phi\cL=\cL$ is defined by
\begin{align}\label{RM_formula}
G_\phi(X,Y)=nd\int_SX(s)Y(s)\overline\rho_\phi(s)d\mu_x(s), \quad\forall X,Y\in\cL,
\end{align}
where $x=P(\phi)$. 
\end{dfn}
\begin{rmk}
There are a number of different inner products mentioned in the latter half of this paper. In order to avoid cumbersome notations we denote by $\langle\cdot,\cdot\rangle$ the inner product of a Euclidean space or a tangent space for a particular Riemannian manifold. The distinction between different settings will be indicated via different notations of Riemannian manifolds/metrics or verbal descriptions. We also denote by $\langle\cdot,\cdot\rangle_{G_\phi}$ the above mentioned scalar product $G_\phi$. The norm induced by $G_\phi$ is denoted by $\|\cdot\|_{G_\phi}$.
\end{rmk}
\begin{lem}\label{RM_verification}
Let $P$ be the map defined in Definition \ref{proj}. Then we have the following.
\begin{enumerate}
\item[(1).] $G$ is a special metric with respect to $\Phi$.
\item[(2).] $P$ is a projection with respect to $G$ and $\Phi$ in the sense of Definition \ref{Projection}.
\end{enumerate}
\end{lem}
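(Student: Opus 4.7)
The plan is to verify the two properties in order, using the explicit formulas from Definitions \ref{proj} and \ref{RML} together with the decomposition of $d_\phi P$ given in Lemma \ref{AE_decomp}.

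For (1), I compare \eqref{RM_formula} with Definition \ref{fcnal_RM} (with the ambient dimension $dn$ playing the role of $n$) and set $d\nu_\phi := \overline\rho_\phi(s)\,d\mu_x(s)$, where $x=P(\phi)$. The normalization \eqref{rho_normalized} makes $\nu_\phi$ a probability measure, and its density with respect to the Haar measure $ds$ on $S$ is $\overline\rho_\phi(s)\lambda(x,s)$, which depends smoothly on $\phi$ and is positive and bounded away from zero on the relevant compact range provided by Lemma \ref{well_defined_P}(2), using \eqref{density_est} and the smooth boundedness of $\overline\rho_\phi$. The third condition in Definition \ref{fcnal_RM} amounts to noting that when $\phi=\Phi(x)$ one has $\rho_{\Phi(x)}\equiv 1$ and thus $\overline\rho_{\Phi(x)}\equiv 1$, so $\nu_{\Phi(x)}=\mu_x$. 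These bounds simultaneously supply the $L^2$-compatibility making each $G_\phi$ a scalar product.

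For (2), Definition \ref{Projection} requires (i) $\Phi$ to be an isometric immersion with respect to $G$, (ii) $P\circ\Phi=\id_M$, (iii) $L^2$-smoothness of $P$, and (iv) that the kernel of $d_{\Phi(x)}P$ contains the $G$-orthogonal complement of $T_x\Phi$. For (i), the pushforward $\Phi_*\xi$ is the function $s\mapsto\langle\grad\Phi_s(x),\xi\rangle$, so the change of variables $s=\alpha_x(v)$ with $\mu_x=(\alpha_x)_*ds_x$ together with the standard identity $\int_{T^1_xM}\langle v,\xi\rangle^2\,ds_x(v)=|\xi|^2/(dn)$ gives
$$G_{\Phi(x)}(\Phi_*\xi,\Phi_*\xi)=dn\int_S\langle\grad\Phi_s(x),\xi\rangle^2d\mu_x(s)=|\xi|^2.$$
Condition (ii) is Lemma \ref{well_defined_P}(1), and (iii) is immediate from the integral expressions \eqref{A}, \eqref{E} for $d_\phi P=A_\phi^{-1}\circ E_\phi$, which make sense and depend smoothly on $\phi$ as bounded operators from $L^2(S)$ to $T_{P(\phi)}M$.

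The core of the argument is (iv). At $\phi=\Phi(x)$ the normalization forces $\overline\rho_\phi\equiv 1$, so \eqref{E} reduces to
$$E_{\Phi(x)}(V)=(dn+d)\int_S V(s)\grad\Phi_s(x)\,d\mu_x(s),$$
and for any $\xi\in T_xM$,
$$G_{\Phi(x)}(V,\Phi_*\xi)=dn\int_S V(s)\langle\grad\Phi_s(x),\xi\rangle\,d\mu_x(s)=\frac{dn}{dn+d}\,\langle E_{\Phi(x)}(V),\xi\rangle.$$
Hence $V\perp_G T_x\Phi$ is equivalent to $E_{\Phi(x)}(V)\perp T_xM$, i.e.\ $E_{\Phi(x)}(V)=0$; applying the invertible operator $A_{\Phi(x)}^{-1}$ from Lemma \ref{AE_decomp} then yields $d_{\Phi(x)}P(V)=0$. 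I do not anticipate substantive obstacles: both assertions reduce to bookkeeping once the formulas for $G$, $E_\phi$, and $\overline\rho_\phi$ are in hand, and the only delicate points, namely the invertibility of $A_{x,s}$ via \eqref{invertibility_Axs} and the positivity of the density via \eqref{density_est}, have already been arranged by requiring $g$ to be sufficiently close to $g_0$.
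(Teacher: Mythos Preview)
Your proposal is correct and follows essentially the same approach as the paper. The paper's proof is slightly more terse: it leaves the isometric immersion property of $\Phi$ implicit (it follows from $G$ being special), re-derives $P\circ\Phi=\id_M$ directly within part~(1) rather than citing Lemma~\ref{well_defined_P}(1), and writes the key identity for part~(2) as $\langle E_{\Phi(x)}(X),v\rangle=\frac{n+1}{n}G_{\Phi(x)}(X,d_x\Phi(v))$ rather than your reciprocal form $G_{\Phi(x)}(V,\Phi_*\xi)=\frac{dn}{dn+d}\langle E_{\Phi(x)}(V),\xi\rangle$; these are purely cosmetic differences.
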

\begin{proof}
\begin{enumerate}
\item[(1).] The first two requirements of Definition \ref{fcnal_RM} follow immediately. For any $\phi=\Phi(x)$, we have 
$$\rho_\phi(s)=e^{(dn+d)(\Phi_s(x)-\Phi_s(x))}\equiv 1.$$
Therefore 
$$\cX_{\phi,g}(x)=\int_S\grad\Phi_s(x)d\mu_x(s)=\int_{T^1_xM}vds_x(v)=0,$$
which is equivalent to $\Omega_{\phi,g}(x)=0$ and therefore $P(\phi)=x$. Hence $P\circ\Phi=\id_M$.
Meanwhile, the scalar product $G_\phi$ on $T_\phi\cL=\cL$ is defined by 
$$G_\phi(X,Y)=nd\int_SX(s)Y(s)d\mu_x(s).$$
Direct computations imply that it satisfies the third requirement of Definition \ref{fcnal_RM}.
\item[(2).] The fact that $P$ is $L^2$-smooth in the sense of Definition \ref{L2_smooth} follows from Lemma \ref{well_defined_P} and Lemma \ref{AE_decomp}. Since we proved the first requirement of Definition \ref{Projection} in our previous assertion, it remains to verify the second requirement, that is, for every $x\in M$, $d_{\Phi(x)}P(V)=0$ for every vector $V\in\cL$ orthogonal (with respect to $G$) to $T_x\Phi$. Let $\phi=\Phi(x)$ for simplicity. By Lemma \ref{AE_decomp}, it suffices to show that $E_{\phi}(X)=0$ for any $X\in T_\phi\cL$ perpendicular to $T_x\Phi$ with respect to $G_\phi$.

Let $v\in T_xM$ be an arbitary vector in $T_xM$. We have
\begin{align*}
\langle E_\phi(X),v\rangle 
=&\left\langle (dn+d)\int_SX(s)\grad\Phi_s(x)d\mu_x(s),v\right\rangle \\
=&(dn+d)\int_SX(s)\langle\grad\Phi_s(x),v\rangle d\mu_x(s) \\
=&(dn+d)\int_SX(s)d_x\Phi(v)(s)d\mu_x(s)\\
=&\frac{(dn+d)}{dn}\cdot dn\int_SX(s)d_x\Phi(v)(s)d\mu_x(s)
=\frac{(n+1)}{n}G_\phi(X,d_x\Phi(v))=0.
\end{align*}
By arbitariness of $v\in T_xM$, we have $E_\phi(X)=0$. Hence $d_\phi P(X)=0$.\qedhere
\end{enumerate}
\end{proof}

\section{Approximating $d_\phi P$}\label{s5}
\subsection{The construction of $\widehat{A}_\phi$ when $\KK\neq\OO$}\label{ss51}
We will first recall the model for $M_0=(\KK\mathbf{H}^n,g_0)$ in \cite[Ch II.10]{BH99} when $\KK\neq\OO$. Let $\KK\mathbf{P}^n:=(\KK^{n+1}\setminus\{0\})/\{v\sim v\lambda,\lambda\in\KK\setminus\{0\}\}$ and $[v]\in\KK\mathbf{P}^n$ be the equivalence class of $v$. Denoted by $q$ a quadratic form on $\KK^{n+1}$ defined as
$$q(v,w)=\overline{v}_0w_0-\sum_{j=1}^n\overline{v}_jw_j$$
for any $v=(v_0,...,v_n)$ and $w=(w_0,...,w_n)$ in $\KK^{n+1}$. Define
$$\KK\mathbf{H}^n=\{[v]\in\KK\mathbf{P}^n|v\in\KK^{n+1}\setminus\{0\}, q(v,v)>0\}.$$
For any $u\in\KK^{n+1}$ with $q(u,u)=1$, the tangent space of $\KK\mathbf{H}^n$ at $[u]$ can be identified as $\{v\in\KK^{n+1}|q(u,v)=0\}$. Angles with respect to $g_0$ can be defined by
$$\cos(\angle_{g_0}(v,w))=\frac{\mathrm{Re}(q(v,w))}{\sqrt{q(v,v)q(w,w)}},\quad\forall v,w\in T_{[u]}M_0\setminus\{0\}.$$
For any $[u]\in M_0$ with $q(u,u)=1$ and any linearly independent $v,w\in T_{[u]}M_0$, we have the following classical result on curvatures in $M_0$.
\begin{prop}\cite[10.12 Proposition]{BH99}\label{KHn_info}
\begin{enumerate}
\item[(1).] If $v=w\lambda$ for some $\lambda\in\KK\setminus \RR$, then the sectional curvature $K_{M_0}(v,w)=-4$;
\item[(2).] If $q(v,w)\in\RR$, then the sectional curvature $K_{M_0}(v,w)=-1$.
\end{enumerate}
\end{prop}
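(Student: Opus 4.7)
The plan is to compute $K_{M_0}$ by exploiting the Riemannian symmetric space structure of $M_0 = \KK\mathbf{H}^n$. Realize $M_0 = G/K$ where $G$ is the identity component of the isometry group of the Hermitian form $q$ (so $G = \SU(n,1)$ for $\KK = \CC$ and $G = \Sp(n,1)$ for $\KK = \HH$) and $K$ is the stabilizer of the basepoint $[u_0] := [(1, 0, \ldots, 0)]$. The Cartan decomposition $\mathfrak{g} = \mathfrak{k} \oplus \mathfrak{p}$ identifies $T_{[u_0]} M_0 \cong \mathfrak{p}$, and in the standard matrix realization one identifies $\mathfrak{p} \cong \KK^n$ via the assignment $x \mapsto X_x := \begin{pmatrix} 0 & x^* \\ x & 0 \end{pmatrix}$. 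By $G$-homogeneity, it suffices to verify the claim at $[u_0]$.

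For a Riemannian symmetric space of non-compact type, the sectional curvature is
$$K_{M_0}(X, Y) = -\frac{\langle [X, Y], [X, Y]\rangle}{\|X\|^2 \|Y\|^2 - \langle X, Y\rangle^2},$$
where the inner product on $\mathfrak{p}$ is normalized so that its identification with $T_{[u_0]} M_0$ is an isometry for $g_0$ (equivalently, proportional to $\mathrm{Re}(q)$, with proportionality constant dictated by the angle formula in the excerpt). Since $[\mathfrak{p}, \mathfrak{p}] \subset \mathfrak{k}$, the bracket in the numerator lies in $\mathfrak{k}$ and its squared norm is a routine matrix computation.

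Now I specialize. In case (1), after applying an element of $K$, I may assume $w$ corresponds to $e_1 \in \KK^n$ and $v$ corresponds to $e_1 \lambda$, so $X_w, X_v$ are the off-diagonal blocks built from these vectors. A direct matrix computation shows that $[X_v, X_w]$ has squared norm equal to $4\,|\mathrm{Im}(\lambda)|^2$ after the normalization above, while $\|X_v\|^2 \|X_w\|^2 - \langle X_v, X_w\rangle^2 = |\mathrm{Im}(\lambda)|^2$; the ratio is $4$, giving $K_{M_0}(v,w) = -4$. In case (2), the condition $q(v, w) \in \RR$ means $X_v, X_w$ come from $\KK$-vectors $x, y$ whose Hermitian pairing is real, so the real span is a totally real $2$-plane. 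The analogous bracket computation for $[X_v, X_w]$ yields a numerator equal to the denominator, hence $K_{M_0}(v,w) = -1$.

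The main obstacle is pinning down the normalization: the Killing form on $\mathfrak{g}$, the induced inner product on $\mathfrak{p}$, and the metric $g_0$ defined via $\mathrm{Re}(q)$ must all be brought into agreement so that the extremal curvatures come out as exactly $-4$ and $-1$, rather than some pair of numbers differing by the overall scaling. This is settled once by comparing the Killing form against $\mathrm{Re}(q)$ on a single convenient tangent vector; after that the bracket computations above reduce to routine matrix algebra in $\mathfrak{su}(n,1)$ or $\mathfrak{sp}(n,1)$.
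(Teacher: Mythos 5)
Your Lie-theoretic route via the symmetric space structure $M_0 = G/K$ is correct and genuinely different from the paper's, which simply cites \cite[II.10.12]{BH99}; that reference (consistent with how the paper handles the Cayley case in Appendix~\ref{appB}) works metrically, deriving the distance formula from the quadratic form $q$ and reading off curvatures by comparing geodesic hinges against hinges in constant-curvature space forms. Your approach trades that for commutator algebra in $\mathfrak{su}(n,1)$ or $\mathfrak{sp}(n,1)$ together with the formula $K(X,Y) = -\|[X,Y]\|^2/(\|X\|^2\|Y\|^2 - \langle X,Y\rangle^2)$, which is uniform and conceptually transparent. Two remarks on the normalization step you flag as the ``main obstacle.'' First, the displayed curvature formula requires the inner products on $\mathfrak{p}$ and $\mathfrak{k}$ to be the \emph{same} scalar multiple $c$ of the Killing form: $\mathrm{ad}$-invariance is what turns $\langle R(X,Y)Y,X\rangle$ into $-\|[X,Y]\|^2$, and an inconsistent scaling between $\mathfrak p$ and $\mathfrak k$ leaves an extra factor of $c$. (This is harmless here since $\mathfrak p$ is $K$-irreducible, so $\mathrm{Re}(q)$ is forced to be a multiple of $B|_{\mathfrak p}$, but it is worth stating.) Second, rather than pinning down $c$ by comparing $B$ to $\mathrm{Re}(q)$, you can note that your two bracket computations return $K=-4\kappa$ and $K=-\kappa$ for a single unknown constant $\kappa>0$ depending only on $c$; since the paper defines $g_0$ precisely by the requirement that sectional curvature lie in $[-4,-1]$, this forces $\kappa=1$ with no further computation.
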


Recall that in Section \ref{s4} we wrote the derivative map $d_\phi P$ as a composition of two operators $A_\phi^{-1}$ and $E_\phi$ for any $\phi\in\cB(R)$. A direct application of the above facts gives the following lemma on the operator $A_\phi$ when $g=g_0$.
\begin{lem}\label{hess}
Let $\Phi$ be as in the previous section. If $g=g_0$, then we have the following:
\begin{enumerate}
\item[(1).] We can compute the Hessian of $e^{2\Phi_s(x)}/2$:
$$\hess_{g_0}\frac{1}{2}e^{2\Phi_s(x)}=e^{2\Phi_s(x)}\left[g_0+\sum_{t=0}^{d-1}(d\Phi_s\circ \cJ_{t,g_0})^2\right].$$
The maps $\cJ_{t,g_0}:TM\to TM$ come from the complex structure and quaternionic structure on $M$. In other words, $\cJ_{0,g_0}=\id$, $\cJ_{1,g_0}(v)=v i$ when $\KK=\CC$ or $\HH$, $\cJ_{2,g_0}(v)=vj$ and $\cJ_{3,g_0}(v)=v k$ when $\KK=\HH$. 
\item[(2).] The operator $A_{x,s}$ defined in \eqref{Axs} has the following explicit formula:
\begin{align}\label{Axs_formula}
A_{x,s}(\xi)=\xi+\sum_{t=0}^{d-1}\langle\xi,\cJ_{t,g_0}\grad\Phi_s(x)\rangle \cJ_{t,g_0}\grad\Phi_s(x),\quad\forall \xi\in T_xM.
\end{align}
\end{enumerate}
\end{lem}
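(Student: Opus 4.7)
The two statements are essentially equivalent: once we have the formula for $\hess_{g_0}\Phi_s$, part (1) follows by the Leibniz rule and part (2) follows by expanding \eqref{Axs} together with Lemma \ref{delta_X_density}. So the plan is to first pin down $\hess_{g_0}\Phi_s$ on each of the three natural orthogonal eigenspaces determined by the $\KK$-structure, and then perform the two bookkeeping computations.

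For part (1), I write
\[
\hess_{g_0}\tfrac{1}{2}e^{2\Phi_s} = e^{2\Phi_s}\,\hess_{g_0}\Phi_s + 2e^{2\Phi_s}\,d\Phi_s\otimes d\Phi_s,
\]
which reduces the target identity to
\[
\hess_{g_0}\Phi_s \;=\; g_0 \;-\; d\Phi_s\otimes d\Phi_s \;+\; \sum_{t=1}^{d-1}(d\Phi_s\circ\cJ_{t,g_0})^{2}.
\]
I verify this pointwise using the orthogonal splitting
\[
T_xM \;=\; \RR\grad\Phi_s \;\oplus\; \mathrm{span}_{\RR}\{\cJ_{t,g_0}\grad\Phi_s\}_{t=1}^{d-1} \;\oplus\; W,
\]
where $W$ is the $\KK$-orthogonal complement of the $\KK$-line through $\grad\Phi_s$. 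On the first summand $\hess_{g_0}\Phi_s$ vanishes because gradient curves of $\Phi_s$ are unit-speed geodesics. On $\cJ_{t,g_0}\grad\Phi_s$, Proposition \ref{KHn_info}(1) gives sectional curvature $-4$ on the plane $\mathrm{span}(\grad\Phi_s,\cJ_{t,g_0}\grad\Phi_s)$; the stable Jacobi field along the forward ray to $s$ then decays like $e^{-2t}$, so the horosphere principal curvature (equivalently, $\hess_{g_0}\Phi_s$ restricted here) equals $2$. On $W$, Proposition \ref{KHn_info}(2) gives sectional curvature $-1$ and principal curvature $1$. Using that each $\cJ_{t,g_0}$ is skew-adjoint (so $d\Phi_s\circ\cJ_{t,g_0}=-\langle\cJ_{t,g_0}\grad\Phi_s,\cdot\rangle$) and that the products $\cJ_t\cJ_{t'}\grad\Phi_s$ again lie in $\mathrm{span}_{\RR}\{\cJ_{t'',g_0}\grad\Phi_s\}_{t''\geq 1}$, a direct evaluation of the right-hand side on each of the three summands yields $0$, $2$, and $1$ respectively, matching.

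For part (2), I apply the Leibniz rule to the bracket in \eqref{Axs}, which splits the derivative into the exponential factor, the density $\lambda$, and the Hessian:
\[
A_{x,s}(\xi) = \nabla_\xi\grad\Phi_s + \bigl[(dn+d)\,d\Phi_s(\xi) + \xi\log\lambda(x,s)\bigr]\grad\Phi_s.
\]
Since $g=g_0$, Lemma \ref{delta_X_density} gives $\log\lambda(x,s) = -(dn+d-2)\Phi_s(x)$, so $\xi\log\lambda = -(dn+d-2)\,d\Phi_s(\xi)$ and the bracket collapses neatly to $2\,d\Phi_s(\xi)$. Raising an index in the Hessian formula from part (1) yields
\[
\nabla_\xi\grad\Phi_s = \xi - d\Phi_s(\xi)\grad\Phi_s + \sum_{t=1}^{d-1}\langle\xi,\cJ_{t,g_0}\grad\Phi_s\rangle\,\cJ_{t,g_0}\grad\Phi_s,
\]
and adding $2\,d\Phi_s(\xi)\grad\Phi_s$ then absorbing $d\Phi_s(\xi)\grad\Phi_s$ as the $t=0$ term (with $\cJ_{0,g_0}=\id$) gives exactly \eqref{Axs_formula}.

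\textbf{Main obstacle.} The entire lemma hinges on the explicit diagonalization of $\hess_{g_0}\Phi_s$, which in turn requires correctly reading off the horosphere principal curvatures from the Jacobi equation in the $-4$ and $-1$ curvature planes furnished by Proposition \ref{KHn_info}. The remaining algebra is straightforward, and the delightful cancellation in part (2) is driven by the exact arithmetic match between the exponent $dn+d$ used in the definition of $A_{x,s}$ and the volume-entropy exponent $dn+d-2$ in Lemma \ref{delta_X_density}, which is precisely what leaves the coefficient $2$ that converts the $-d\Phi_s(\xi)\grad\Phi_s$ of the Hessian into the $+d\Phi_s(\xi)\grad\Phi_s$ required to fold into the $t=0$ summand.
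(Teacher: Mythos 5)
Your proof is correct, and for part (1) it takes a genuinely different route from the paper. The paper establishes the Hessian formula simply by citing \cite[pp.\ 751]{BCG1}; you instead derive it from first principles by diagonalizing $\hess_{g_0}\Phi_s$ on the three $\KK$-natural summands of $T_xM$ and reading off the eigenvalues $0,2,1$ from the Riccati/Jacobi-field decay rates attached to the curvature values $0$, $-4$, $-1$ supplied by Proposition \ref{KHn_info}. The trade-off is clear: your version is self-contained (valuable if one wants to avoid the BCG citation) but relies on two standard facts you state implicitly --- that the eigenspace decomposition of $R(\cdot,\dot\gamma)\dot\gamma$ is parallel along $\gamma$ in a symmetric space (so the Jacobi equation decouples into scalar equations), and the Riccati identity $\hess\Phi_s(J,J)=-\langle J',J\rangle$ relating a stable Jacobi field to the second fundamental form of the horosphere. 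Both are standard and correct, but if you want a complete argument they should be spelled out rather than compressed into ``the stable Jacobi field decays like $e^{-2t}$, so the horosphere principal curvature equals $2$.'' For part (2), your argument and the paper's are essentially the same: both reduce to Lemma \ref{delta_X_density} and the Hessian formula; you expand the Leibniz rule and isolate $\xi\log\lambda$, while the paper first notices $e^{(dn+d)\Phi_s}\lambda=e^{2\Phi_s}$ and directly rewrites $A_{x,s}(\xi)=e^{-2\Phi_s}\nabla_\xi\grad(\tfrac12e^{2\Phi_s})$ --- a minor organizational difference, not a different idea.
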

\begin{proof}
\begin{enumerate}
\item[(1).] This can be found in \cite[pp. 751]{BCG1}.
\item[(2).] By Lemma \ref{delta_X_density}, we have
$$\lambda(x,s)=e^{-(dn+d-2)\Phi_s(x)},\quad\forall x\in M,s\in S,$$
when $g=g_0$. Therefore, we can further simplify \eqref{Axs} to the following.
\begin{align*}
A_{x,s}(\xi)=&e^{-(dn+d)\Phi_s(x)}\lambda(x,s)^{-1}\nabla_\xi[e^{(dn+d)\Phi_s(x)}\lambda(x,s)\grad\Phi_s(x)] \\
=&e^{-2\Phi_s(x)}\nabla_\xi[e^{2\Phi_s(x)}\grad\Phi_s(x)] 
=e^{-2\Phi_s(x)}\nabla_\xi\grad\left( \frac{1}{2}e^{2\Phi_s(x)}\right).
\end{align*}
Hence for any vector $\eta\in T_xM$, we have
\begin{align*}
\langle A_{x,s}(\xi),\eta\rangle&=\frac{1}{2}\left\langle e^{-2\Phi_s(x)}\nabla_\xi\grad( e^{2\Phi_s(x)}),\eta\right\rangle \\
&=\frac{1}{2}e^{-2\Phi_s(x)}\left\langle \nabla_\xi\grad( e^{2\Phi_s(x)}),\eta\right\rangle \\
&=\frac{1}{2}e^{-2\Phi_s(x)}\hess_{g_0}e^{2\Phi_s(x)}(\xi,\eta) \\
&=\langle\xi,\eta\rangle+\sum_{t=0}^{d-1}\langle\xi,\cJ_{t,g_0}\grad\Phi_s(x)\rangle\langle\eta,\cJ_{t,g_0}\grad\Phi_s(x)\rangle \\
&=\left\langle\xi+\sum_{t=0}^{d-1}\langle\xi,\cJ_{t,g_0}\grad\Phi_s(x)\rangle \cJ_{t,g_0}\grad\Phi_s(x),\eta\right\rangle.
\end{align*}
Therefore,
\begin{align*}A_{x,s}(\xi)=\xi+\sum_{t=0}^{d-1}\langle\xi,\cJ_{t,g_0}\grad\Phi_s(x)\rangle \cJ_{t,g_0}\grad\Phi_s(x),\quad\forall \xi\in T_xM. \tag*{\qedhere}\end{align*}
\end{enumerate}
\end{proof}

The above result relies on the almost complex structure (almost quaternionic structure resp.) on $TM$ when $g=g_0$ and the explicit formulae for density functions $\lambda_g(x,s)$. In order to understand the more general case when $g$ is sufficiently close to $g_0$, similarly we construct an almost complex structure (almost quaternionic structure resp.) which preserves the Riemannian metric $g$. 
\begin{notation}
Let $V,W$ be any finite dimensional real vector spaces. Assume that $v=\{v_1,v_2,...,v_m\}$ and $w=\{w_1,w_2,...,w_l\}$ are bases for $V$ and $W$ respectively. Let $T:V\to W$ be a linear map. We denote by $_w[T]_v$ the matrix of $T$ under bases $v$ and $w$. In other words, if 
$$T(v_i)=\sum_{j=1}^la_{ji}w_j,\quad 1\leq i\leq m,$$
then
$$_w[T]_v=(a_{ij})_{1\leq i\leq m,~	1\leq j\leq l}.$$
\end{notation}
We start with a collection of vector fields $\{\widetilde\xi_{l,t,g_0}|1\leq l\leq n, 0\leq t\leq d-1\}$ such that for any $x\in \KK\mathbf{H}^n$, $\{\widetilde\xi_{l,t,g_0}(x)|1\leq l\leq n, 0\leq l\leq d-1\}$ is an orthonormal basis for $T_x\KK\mathbf{H}^n$ under the standard complex hyperbolic metric $g_0$. In other words, we have the following
$$\langle\widetilde\xi_{l_1,t_1,g_0}(x),\widetilde\xi_{l_2,t_2,g_0}(x)\rangle_{g_0}=\delta_{l_1l_2}\delta_{t_1t_2},$$
where $1\leq l_1,l_2\leq n$ and $0\leq t_1,t_2\leq d-1$.
In addition, we can further assume that these vector fields satisfy the following equations.
$$\cJ_{t,g_0}\widetilde\xi_{l,0,g_0}=\widetilde\xi_{l,t,g_0},\quad 1\leq s\leq n\mathrm{~and~}0\leq t\leq d-1.$$

We can therefore apply a Gram-Schmidt process to obtain a new collection of vector fields $\{\widetilde\xi_{l,t,g}|1\leq l\leq n, 0\leq t\leq d-1\}$ such that for any $x\in M=(\KK\mathbf{H}^n, g)$, $\{\widetilde\xi_{l,t,g}(x)|1\leq l\leq n, 0\leq t\leq d-1\}$ is an orthonormal basis for $T_xM$ under $g$. In other words, we have
\begin{align*}&\left(\widetilde\xi_{1,0,g},...,\widetilde\xi_{1,d-1,g},...,\widetilde\xi_{n,0,g},...,\widetilde\xi_{n,d-1,g}\right)=\left(\widetilde\xi_{1,0,g_0},...,\widetilde\xi_{1,d-1,g_0},...,\widetilde\xi_{n,0,g_0},...,\widetilde\xi_{n,d-1,g_0}\right)\cdot \cN_g,\end{align*}
where $\cN_g$ is a $dn\times dn$ upper triangular matrix recording the Gram-Schmidt process, and 
$$\langle\widetilde\xi_{l_1,t_1,g}(x),\widetilde\xi_{l_2,t_2,g}(x)\rangle=\delta_{l_1l_2}\delta_{t_1t_2},$$
where $1\leq l_1,l_2\leq n$ and $0\leq t_1,t_2\leq d-1$. 

We denote by $\widetilde\Xi_g$ the ordered basis $\{\widetilde\xi_{1,0,g},...,\widetilde\xi_{1,d-1,g},...,\widetilde\xi_{n,0,g},...,\widetilde\xi_{n,d-1,g}\}$ and for simplicity $\widetilde\Xi$ when there is no ambiguity of the metric. We construct an almost complex structure or almost quaternionic structure $\cJ_{t,g}$ on $TM$ such that 
$$_{\widetilde\Xi_g}[\cJ_{t,g}]_{\widetilde\Xi_g}=~_{\widetilde\Xi_{g_0}}[\cJ_{t,g_0}]_{\widetilde\Xi_{g_0}},\quad\forall 0\leq t\leq d-1.$$
It is clear that $\cJ_{t,g}$ preserves $g$. Moreover, we have the following lemma.
\begin{lem}\label{err_ACS}
If $\|g-g_0\|_{C^r}\leq \epsilon\ll 1$, then $\|\cJ_g-\cJ_{g_0}\|_{C^r}\leq \cK_0(n,r)\|g-g_0\|_{C^r}$ for some constant $\cK_0(n,r)$ depending only on $n$ and $r$.
\end{lem}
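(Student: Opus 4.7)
The plan is to pass to a single fixed reference frame and reduce the statement to a quantitative smoothness property of the Gram--Schmidt algorithm. Under the fixed orthonormal-for-$g_0$ frame $\widetilde\Xi_{g_0}$, both $\cJ_{t,g_0}$ and $\cJ_{t,g}$ have matrix representations, and the latter is obtained from the former by conjugation with the Gram--Schmidt matrix $\cN_g$. Thus I first write
$$_{\widetilde\Xi_{g_0}}[\cJ_{t,g}]_{\widetilde\Xi_{g_0}} \;=\; \cN_g \cdot {_{\widetilde\Xi_{g_0}}[\cJ_{t,g_0}]_{\widetilde\Xi_{g_0}}} \cdot \cN_g^{-1},$$
which follows directly from the definition $_{\widetilde\Xi_g}[\cJ_{t,g}]_{\widetilde\Xi_g} = {_{\widetilde\Xi_{g_0}}[\cJ_{t,g_0}]_{\widetilde\Xi_{g_0}}}$ and $\widetilde\Xi_g = \widetilde\Xi_{g_0}\cdot \cN_g$, so the problem reduces to estimating $\cN_g - I$.

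Next I would observe that $\cN_g$ is produced from the Gram matrix $\mathrm{Gr}(g):=\big(g(\widetilde\xi_{l_1,t_1,g_0},\widetilde\xi_{l_2,t_2,g_0})\big)_{l_1,t_1,l_2,t_2}$ by a finite sequence of arithmetic operations and square roots of diagonal (strictly positive) quantities; hence $\cN_g$ is a real-analytic function of $\mathrm{Gr}(g)$ on a neighborhood of the identity matrix. Moreover $\mathrm{Gr}(g_0) = I$, so $\cN_{g_0} = I$. Since the assignment $g \mapsto \mathrm{Gr}(g)$ is linear in $g$ and depends only on the fixed smooth frame $\widetilde\Xi_{g_0}$, we obtain $\|\mathrm{Gr}(g) - I\|_{C^r} \le C_1(n,r)\|g-g_0\|_{C^r}$ (uniformly on the relevant compact set where $g\neq g_0$, with constants depending only on the fixed frame). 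Composition with the real-analytic Gram--Schmidt map, evaluated at a point close to $I$, is Lipschitz in $C^r$ by the standard Banach-algebra property of $C^r$, giving
$$\|\cN_g - I\|_{C^r} \;\le\; C_2(n,r)\,\|g-g_0\|_{C^r}.$$

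Finally, conjugation $X \mapsto (I+X)J_t(I+X)^{-1} - J_t$ is a smooth map of Banach algebras that vanishes at $X=0$, so on the $C^r$-neighborhood where $\cN_g - I$ is small we have
$$\bigl\|\cN_g\,J_t\,\cN_g^{-1} - J_t\bigr\|_{C^r} \;\le\; C_3(n,r)\,\|\cN_g - I\|_{C^r},$$
where $J_t := {_{\widetilde\Xi_{g_0}}[\cJ_{t,g_0}]_{\widetilde\Xi_{g_0}}}$ is a constant matrix whose norm is controlled in terms of $n$. Because $\widetilde\Xi_{g_0}$ is a fixed $g_0$-orthonormal smooth frame, converting matrix entries back to the tensor $\cJ_{t,g} - \cJ_{t,g_0}$ is a $C^r$-bounded operation with constants depending only on $n$ and $r$. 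Combining the three estimates yields $\|\cJ_g - \cJ_{g_0}\|_{C^r} \le \cK_0(n,r)\|g-g_0\|_{C^r}$.

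The only technical nuisance I expect is ensuring that the implicit constants really depend on $n$ and $r$ alone. This is handled by noting that the entire argument takes place in a fixed compact neighborhood (outside of which $g\equiv g_0$) equipped with the fixed smooth frame $\widetilde\Xi_{g_0}$, and that the two nonlinear maps involved (Gram--Schmidt and matrix inversion-and-conjugation) are real-analytic in a neighborhood of their base points, which gives uniform Lipschitz constants once $\epsilon$ is chosen small enough.
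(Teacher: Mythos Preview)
The paper states this lemma without proof, treating it as a routine consequence of the construction; your argument supplies exactly the standard details one would fill in. Your reduction to the conjugation formula $_{\widetilde\Xi_{g_0}}[\cJ_{t,g}]_{\widetilde\Xi_{g_0}} = \cN_g\,J_t\,\cN_g^{-1}$ is correct given the paper's definitions, and the chain of estimates (Gram matrix linear in $g$, Gram--Schmidt real-analytic near the identity, conjugation smooth and vanishing at $X=0$) is sound.
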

We define a new operator $\widehat A_{x,s}:T_xM\to T_xM$ as follows
\begin{align}\label{Axs_approx}
\widehat A_{x,s}(\xi)=\xi+\sum_{t=0}^{d-1}\langle\xi,\cJ_{t,g}\grad\Phi_s(x)\rangle \cJ_{t,g}\grad\Phi_s(x),\quad\forall \xi\in T_xM.
\end{align}
One should expect that $\widehat A_{x,s}$ is close to $A_{x,s}$ if $g$ is close to $g_0$ since $\widehat A_{x,s}$ and $A_{x,s}$ on $M$ (when the metric is $g$) are close to their counterparts on $M_0$ (when the metric is $g_0$) and $\widehat A_{x,s}=A_{x,s}$ when $g=g_0$. If $x=P(\phi)$, we have a corresponding $\widehat A_\phi:T_xM\to T_xM$ defined as
\begin{align}\label{A_approx}
\widehat A_\phi=\int_S\overline\rho_\phi(s)\widehat A_{x,s}d\mu_x(s).
\end{align}
$\widehat A_\phi$ is close to $A_\phi$ when $g$ is close to $g_0$. In particular, $\widehat A_\phi=A_\phi$ when $g=g_0$.

\subsection{The construction of $\widehat{A}_\phi$ when $\KK=\OO$ and $n=2$}\label{ss52}
Since $\OO$ is not associative, the models for complex hyperbolic and quaternionic hyperbolic spaces do not work for the Cayley hyperbolic space. In particular, the remark after Corollary \ref{curv_data_OH^2} suggests that we cannot find any fiberwise linear map $\cJ:TM_0\to TM_0$ such that for any $v\in TM_0$, the sectional curvature $K_{M_0}(v,\cJ(v))=-4$. Such $\cJ$ maps exist for complex and quaternionic hyperbolic spaces and was used in the constructions when $\KK\neq\OO$ (see Subsection \ref{ss51}). Hence we need a different way to construct $\widehat{A}_\phi$ in the Cayley hyperbolic setting.

Recall that in Proposition \ref{curv_OH^2} we define the \emph{Cayley line} of a vector $0\neq v=(a,b)\in\OO^2$ as
$$\Cay(v)=\begin{cases}\displaystyle \OO\cdot (1,a^{-1}b),\quad &a\neq 0;\\ \OO\cdot(0,1) ,&a=0.\displaystyle\end{cases}.$$
Let $F_4^{-20}=\cK\cA\cN$ be the Iwasawa decomposition of $F_4^{-20}$. Denoted by $v_{l,t}=(\delta_{1t}e_t,\delta_{2t}e_t)$ with $l=1,2$, $\delta_{lm}$ the Kronecker delta, $0\leq t\leq 7$ and $\{e_t\}_{0\leq t\leq7}$ the standard orthonormal basis for $\OO$. Then we can construct $\cA\cN$-invariant vector fields $\widetilde\xi_{l,t,g_0}$ such that  
$$\widetilde\xi_{l,t,g_0}(x_0)=\Psi(v_{l,t})\quad l=1,2\mathrm{~and~}0\leq t\leq 7,$$
where $\Psi$ is the same as $d\chi$ in Proposition \ref{curv_OH^2}. Define $\Psi_{x,g_0}:\OO^2\to T_xM_0$ such that
$$\Psi_{x,g_0}(v_{l,t})=\widetilde\xi_{l,t,g_0}(x),\quad l=1,2\mathrm{~and~}0\leq t\leq7.$$
Denoted by 
$$\Cay_{g_0}(\xi)=\Psi_{x,g_0}\left(\Cay(\Psi_{x,g_0}^{-1}(\xi))\right),\quad0\neq\xi\in T_xM_0$$
the \emph{Cayley line} containing $\xi$. Then following the fact that the Cayley hyperbolic space is Riemannian symmetric, the same arguments in Corollary \ref{curv_data_OH^2} can be applied to all points in $M_0$. Namely,
\begin{cor}
For any $x\in M_0$ and any non-zero $\xi,\xi'\in T_{x}M_0$.
\begin{enumerate}
\item[(1).] If $\xi,\xi'$ belong to the same Cayley line and $\xi\not\in\RR \xi'$, then the sectional curvature $K_{M_0}(\xi,\xi')=-4$;
\item[(2).] If $\Cay_{g_0}(\xi)\perp\Cay_{g_0}(\xi')$, then the sectional curvature $K_{M_0}(\xi,\xi')=-1$.
\end{enumerate}
\end{cor}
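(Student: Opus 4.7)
The plan is to reduce the statement to the already-established case $x=x_0$ from Corollary \ref{curv_data_OH^2} by using the fact that $M_0$ is Riemannian symmetric, so its isometry group acts transitively. The Iwasawa decomposition $F_4^{-20}=\cK\cA\cN$ yields a simply transitive action of $\cA\cN$ on $M_0$, hence for every $x\in M_0$ there exists a unique $h\in\cA\cN$ with $h\cdot x_0=x$, and the differential $dh_{x_0}\colon T_{x_0}M_0\to T_xM_0$ is a linear isometry.

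The key compatibility I need to verify is that $dh_{x_0}$ intertwines the trivializations $\Psi_{x_0,g_0}$ and $\Psi_{x,g_0}$. This is immediate from the construction of the frame $\{\widetilde\xi_{l,t,g_0}\}$: by assumption these vector fields are $\cA\cN$-invariant and at $x_0$ equal $\Psi(v_{l,t})$, so $dh_{x_0}(\widetilde\xi_{l,t,g_0}(x_0))=\widetilde\xi_{l,t,g_0}(x)$ for every admissible $(l,t)$. Reading off coefficients in the basis $\{v_{l,t}\}\subset\OO^2$ gives $\Psi_{x,g_0}=dh_{x_0}\circ\Psi_{x_0,g_0}$, and therefore
$$\Cay_{g_0}(dh_{x_0}\eta)=\Psi_{x,g_0}\bigl(\Cay(\Psi_{x_0,g_0}^{-1}\eta)\bigr)=dh_{x_0}\bigl(\Cay_{g_0}(\eta)\bigr),\qquad\eta\in T_{x_0}M_0\setminus\{0\}.$$
In other words, $dh_{x_0}$ carries Cayley lines to Cayley lines and, since it is a linear isometry, preserves the orthogonality relation between them.

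With this set up, the proof reduces to a one-line transport argument: given $\xi,\xi'\in T_xM_0\setminus\{0\}$, put $\eta=dh_{x_0}^{-1}\xi$ and $\eta'=dh_{x_0}^{-1}\xi'$. Under hypothesis (1) the vectors $\eta,\eta'$ share the same Cayley line in $T_{x_0}M_0$ and are still $\RR$-linearly independent; under hypothesis (2) one has $\Cay_{g_0}(\eta)\perp\Cay_{g_0}(\eta')$. In either situation Corollary \ref{curv_data_OH^2} applies at $x_0$ to give $K_{M_0}(\eta,\eta')\in\{-4,-1\}$ as claimed, and $K_{M_0}(\xi,\xi')=K_{M_0}(\eta,\eta')$ since $h$ is a Riemannian isometry of $M_0$. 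The only delicate point is the frame-transport identity $\Psi_{x,g_0}=dh_{x_0}\circ\Psi_{x_0,g_0}$; everything else is a formal consequence of $M_0$ being a symmetric space and of sectional curvature being an isometry invariant, so I expect no substantive obstacle beyond unpacking the definitions.
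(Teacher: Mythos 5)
Your proposal is correct and follows the same route the paper intends: the paper's ``proof'' is the single remark that $M_0$ is Riemannian symmetric so the computation at $x_0$ transports to all points, and you have simply made this precise. The crucial compatibility you flag — that $dh_{x_0}\circ\Psi_{x_0,g_0}=\Psi_{x,g_0}$, which follows from the $\cA\cN$-invariance of the frame $\{\widetilde\xi_{l,t,g_0}\}$ and the simple transitivity of $\cA\cN$ on $M_0$ — is exactly the point implicit in the paper's appeal to the symmetric space structure.
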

Therefore we have following the Cayley hyperbolic version of Lemma \ref{hess}.
\begin{lem}\label{hess_Cay}
Let $\Phi$ be as in the previous section and $\KK\mathbf{H}^n=\OO\mathbf{H}^2$. If $g=g_0$, then we have the following:
\begin{enumerate}
\item[(1).] We can compute the Hessian of $e^{2\Phi_s(x)}/2$:
$$\hess_{g_0}\frac{1}{2}e^{2\Phi_s(x)}=e^{2\Phi_s(x)}\left[g_0+g_0\left(\pi_{\Cay_{g_0}(\xi)}(\cdot),\pi_{\Cay_{g_0}(\xi)}(\cdot)\right)\right].$$
\item[(2).] The operator $A_{x,s}$ defined in \eqref{Axs} has the following explicit formula:
\begin{align}\label{Axs_formula_Cay}
A_{x,s}(\xi)=\xi+\pi_{\Cay_{g_0}(\grad\Phi_s(x))}(\xi),\quad\forall \xi\in T_xM,
\end{align}
where $\pi_{\Cay_{g_0}(\xi)}$ denotes the orthogonal projection onto $\Cay_{g_0}(\xi)$.
\end{enumerate}
\end{lem}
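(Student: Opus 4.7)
The plan is to establish (1) first, since (2) then follows by the same computation as in the proof of Lemma \ref{hess}(2): substituting the density $\lambda(x,s) = e^{-(dn+d-2)\Phi_s(x)}$ from Lemma \ref{delta_X_density} into the definition \eqref{Axs} collapses the expression for $A_{x,s}$ to $e^{-2\Phi_s}\nabla_\xi \grad\bigl(\tfrac{1}{2}e^{2\Phi_s}\bigr)$, so $\langle A_{x,s}(\xi), \eta\rangle = e^{-2\Phi_s(x)}\hess_{g_0}\bigl(\tfrac{1}{2}e^{2\Phi_s}\bigr)(\xi, \eta)$. Using (1) together with the identity $\langle \pi X, \eta\rangle = \langle \pi X, \pi\eta\rangle$ for the orthogonal projection $\pi = \pi_{\Cay_{g_0}(\grad\Phi_s)}$ then immediately yields \eqref{Axs_formula_Cay}.

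For (1), I would first apply the chain rule for Hessians to $f(t) = e^{2t}/2$, giving
$$\hess_{g_0}(f \circ \Phi_s) = e^{2\Phi_s}\bigl(2\, d\Phi_s \otimes d\Phi_s + \hess_{g_0}\Phi_s\bigr),$$
reducing everything to a computation of $\hess_{g_0}\Phi_s$ on $\OO\mathbf{H}^2$. Since $\Phi_s$ is a Busemann function, $|\grad\Phi_s|_{g_0}\equiv 1$ forces $\hess_{g_0}\Phi_s(\grad\Phi_s,\cdot)\equiv 0$, and the restriction of $\hess_{g_0}\Phi_s$ to $(\grad\Phi_s)^\perp$ equals the shape operator of the horosphere $\Phi_s^{-1}(\Phi_s(x))$.

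Next, I would diagonalize this shape operator using the orthogonal splitting
$$(\grad\Phi_s(x))^\perp = \bigl(\Cay_{g_0}(\grad\Phi_s)\cap(\grad\Phi_s)^\perp\bigr)\oplus \Cay_{g_0}(\grad\Phi_s)^\perp.$$
Each summand is a common eigenspace of the Jacobi operator $R(\cdot,\grad\Phi_s)\grad\Phi_s$, with eigenvalues $-4$ and $-1$ respectively by the corollary preceding the lemma. Since $R$ is parallel in a symmetric space, these eigenspace distributions are parallel along the $\grad\Phi_s$-geodesic flow, so the stable solution of the Riccati equation $U' + U^2 + R = 0$ preserves the splitting and acts as $\sqrt{-K}\cdot I$ on each factor, i.e.~as $2$ on the first summand and as $1$ on the second. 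Decomposing a general $\xi = a\grad\Phi_s + \xi_1 + \xi_2$ according to this splitting and inserting into the chain-rule identity above gives
$$\hess_{g_0}\bigl(\tfrac{1}{2}e^{2\Phi_s}\bigr)(\xi,\xi) = e^{2\Phi_s}\bigl(2a^2 + 2|\xi_1|^2 + |\xi_2|^2\bigr) = e^{2\Phi_s}\bigl(|\xi|^2 + |\pi_{\Cay_{g_0}(\grad\Phi_s)}(\xi)|^2\bigr),$$
which is precisely (1).

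The main obstacle is justifying that the shape operator really preserves the Cayley-line decomposition. In the complex and quaternionic cases there is a parallel almost complex (resp.~quaternionic) structure on $M_0$ making the analogous splitting manifestly preserved, but for $\OO\mathbf{H}^2$ the non-associativity of $\OO$ precludes any fiberwise linear "octonionic structure", which is exactly the issue motivating the introduction of the Cayley line. The route above bypasses this by working directly with the curvature tensor and invoking parallelism of $R$ rather than parallelism of any auxiliary bundle structure; the $\cK$-invariance of the Cayley-line construction under the Iwasawa decomposition $F_4^{-20} = \cK\cA\cN$ provides an alternative verification that the isotropy stabilizing the geodesic ray towards $s$ preserves the splitting. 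All remaining steps are formally identical to the $\KK \neq \OO$ case already carried out in Lemma \ref{hess}.
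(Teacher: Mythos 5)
The paper's own proof of item (1) is just a citation to \cite[pp.~36]{CF} and \cite[pp.~47]{Eberlein96}, and item (2) is dispatched as ``same as Lemma \ref{hess},'' which is also exactly what you do. So for (1) you are supplying a genuinely self-contained argument the paper does not spell out, and the route you take (Hessian chain rule, shape operator of horospheres, Riccati equation on the parallel Jacobi operator) is the right one and the final arithmetic checks out: with $\xi=a\,\grad\Phi_s+\xi_1+\xi_2$, eigenvalues $2,2,1$ on the three pieces give $e^{2\Phi_s}(2a^2+2|\xi_1|^2+|\xi_2|^2)=e^{2\Phi_s}(|\xi|^2+|\pi_{\Cay_{g_0}(\grad\Phi_s)}\xi|^2)$.

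There is, however, one real gap. You assert that $\Cay_{g_0}(\grad\Phi_s)\cap(\grad\Phi_s)^\perp$ and $\Cay_{g_0}(\grad\Phi_s)^\perp$ are \emph{eigenspaces} of the Jacobi operator $R(\cdot,\grad\Phi_s)\grad\Phi_s$ ``by the corollary preceding the lemma,'' but Corollary \ref{curv_data_OH^2} only gives you the diagonal values $\langle R_{\grad\Phi_s}v,v\rangle=-4|v|^2$ on the first summand and $-|v|^2$ on the second. For a self-adjoint operator, knowing the quadratic form separately on two orthogonal subspaces does not imply the operator is block-diagonal: you still need to rule out cross terms $\langle R_{\grad\Phi_s}\xi_1,\xi_2\rangle$. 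The clean way to close this is exactly via the isotropy you allude to at the end but don't push through: the stabilizer of $\grad\Phi_s$ in $\Spin(9)$ is $\Spin(7)$, under which $(\grad\Phi_s)^\perp\cong\RR^{15}$ splits into the vector representation $\RR^7$ (the Cayley-line tangent directions) and the spin representation $\RR^8$ (the orthogonal Cayley line); these are inequivalent irreducibles, so any $\Spin(7)$-equivariant self-adjoint endomorphism — in particular the Jacobi operator — must be a scalar on each by Schur's lemma, and the scalars are then forced to be $-4$ and $-1$ by the sectional-curvature data you quoted. With that inserted, the Riccati argument (constant eigenvalue $\sqrt{-K}$ on each parallel eigenbundle) and everything downstream is correct, and your derivation of (2) from (1) mirrors the proof of Lemma \ref{hess}(2) exactly as intended.
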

\begin{proof}
\begin{enumerate}
\item[(1).] See \cite[pp. 36]{CF} and \cite[pp. 47]{Eberlein96}. 
\item[(2).] Same as Lemma \ref{hess}.
\end{enumerate}
\end{proof}
As in the statement of the above lemma, we heavily used the concept of Cayley lines similar to the way we used almost complex and almost quatenionic structure in complex and quaternionic hyperbolic spaces. Therefore, similar to the previous subsection, the construction of the operator $\widehat{A}_\phi$ approximating $A_\phi$ relies on a notion of Cayley lines for $M$. 

We first apply a Gram-Schmidt process to $\{\tilde\xi_{l,t,g_0}\}$ with respect to the perturbed metric $g$ and obtain $\{\tilde\xi_{l,t,g}\}$ as a collection of orthonormal vector fields on $M$ which gives an orthonormal basis at every point in $M$. Define $\Psi_{x,g}:\OO^2\to T_xM$ such that
$$\Psi_{x,g}(v_{l,t})=\tilde\xi_{l,t,g}(x)\quad l=1,2\mathrm{~and~}0\leq t\leq 7.$$
For any $x\in M$ and $\xi\in T_xM$, we define the \emph{Cayley line} containing $\xi$ by
$$\Cay_g(\xi)=\Psi_{x,g}\left(\Cay(\Psi_{x,g}^{-1}(\xi))\right)\quad0\neq\xi\in T_xM.$$
Then we can mimic the case when $g=g_0$ and define a new operator $\widehat{A}_{x,s}:T_xM\to T_xM$ such that
\begin{align}\label{Axs_approx_Cay}
\widehat{A}_{x,s}(\xi)=\xi+\pi_{\Cay_g(\grad\Phi_s(x))}(\xi),\quad\forall \xi\in T_xM,
\end{align}
where $\pi_{\Cay_g(\grad\Phi_s(x))}$ refers to the orthogonal projection onto $\Cay_g(\grad\Phi_s(x))$. For any $\phi\in P^{-1}(\cB(R))$ and $x=P(\phi)$, we have a corresponding $\widehat{A}_\phi:T_xM\to T_xM$ defined as
\begin{align}\label{A_approx_Cay}
\widehat{A}_\phi=\int_S\overline{\rho}_\phi(s)\widehat{A}_{x,s}d\mu_x(s).
\end{align}
In the case when $g=g_0$, $\widehat{A}_\phi=A_\phi$ and they are close when $g$ is close to $g_0$ similar to the complex and quaternionic hyperbolic cases.

\subsection{$E_\phi$ and Jacobian inequalities}

\begin{notation}
For any $\phi\in\cB(R)$ fixed, we let $x=P(\phi)$. For simplicity we write $\xi_{l,t,g}=\widetilde\xi_{l,t,g}(x)$ for all $1\leq l\leq n$ and $0\leq t\leq d-1$. Denoted by $\Xi$ short for $\widetilde\Xi(x)$. Since $\cJ_{t,g}$ preserves $g$, $\cJ_{t,g}\Xi$ is also an (ordered) orthonormal basis of $T_xM$.

We define the following vectors in $T_\phi\cL=\cL$:
 $$X_{l,t,g}(s)=\langle\grad\Phi_s(x),\xi_{l,t,g}\rangle=d\Phi_s(\xi_{l,t,g}),\quad 1\leq l \leq n\mathrm{~and~}0\leq t\leq d-1.$$
They are linearly independent due to $\Xi$ being a basis and the second requirement of $G$ being a special Riemannian metric (see Definition \ref{fcnal_RM} and Definition \ref{RML}). Let $\cV_\phi:=\mathrm{span}\{X_{l,t,g}|1\leq l\leq n,0\leq t\leq d-1\}$. We write 
$$Z_\phi:=\{X_{1,0,g},...,X_{1,d-1,g},...,X_{n,0,g},...,X_{n,d-1,g}\}=d\Phi(\Xi),$$ which is a(n) (ordered) basis for $\cV_\phi$. 
Define $K_{x,g}\subset\SO(T_xM)$ such that
$$K_{x,g}=\{\Psi_{x,g}\circ T\circ \Psi_{x,g}^{-1}| T\in \Spin(9)\subset \SO(\OO^2)=\SO(16)\},$$
where the inclusion of $\Spin(9)\subset\SO(16)$ is given by the third assertion in Corollary \ref{curv_data_OH^2}.
\end{notation}

We are now ready to take a closer look at $\widehat A_\phi$ and $E_\phi$. Write 
$$J_t=~_{\Xi}[\id]_{\cJ_{t,g}\Xi},\quad\forall 1\leq t\leq d-1$$
and 
$$\widehat{Q}_\phi=~_\Xi[n(\widehat{A}_\phi-\id)]_\Xi.$$
Then we have the following

\begin{lem}\label{A_formula}
\begin{enumerate}
\item[(1).] When $\KK\neq\OO$, we have the following formula for $\widehat A_\phi$ defined in \eqref{A_approx}:
\begin{align*}
~_{\Xi} [\widehat{A}_\phi]_{\Xi}=\id+\frac{1}{n}\widehat{Q}_\phi=\id+\frac{1}{dn}\sum_{t=0}^{d-1}J_tQ_{\phi}J_t^{-1},
\end{align*}
where $Q_{\phi}=(Q_{ml,\phi})_{1\leq m,l\leq n}$ is a $dn\times dn$ real symmetric matrix such that 
\begin{align}\label{big_fat_matrix}
Q_{ml,\phi}
=\left(
\begin{array}{ccc}
\langle  X_{m,0,g}, X_{l,0,g}\rangle_{G_\phi}&... &\langle  X_{m,0,g}, X_{l,d-1,g}\rangle_{G_\phi}\\
\vdots&\ddots &\vdots\\
\langle  X_{m,d-1,g}, X_{l,0,g}\rangle_{G_\phi}&... & \langle  X_{m,d-1,g}, X_{l,d-1,g}\rangle_{G_\phi}
\end{array}
\right)
\end{align}
for any $1\leq m,l\leq n$;
\item[(2).] When $\KK=\OO$ and $n=2$, for any $0\neq\xi\in T_xM$, the normalized inner product $\langle\widehat{A}_\phi(\xi),\xi\rangle/\|\xi\|^2$ is constant along $\Cay_g(\xi)$. Equivalently, for any $O\in K_{x,g}$, the matrix $O^T\widehat{Q}_\phi O$ has the form
$$O^T\widehat{Q}_\phi O=\matii{\lambda\id&L}{L^T&\eta\id}\in\mathrm{Mat}_{16\times16}(\RR),$$
where $\lambda,\eta\in\RR_{\geq 0}$ and every block in the above matrix has size $8\times 8$.
\item[(3).] When $\KK=\OO$ and $n=2$, for any $0\neq\xi\in T_xM$, there exist orthogonal linear maps $I_{t,\xi}\in\SO(\Cay_g(\xi))$, $t=0,1,...,7$, such that $I_{0,\xi}=\id$ and
$$\widehat{Q}_{\phi}|_{\Cay_g(\xi)}=\frac{1}{8}\sum_{t=0}^7I_{t,\xi}Q_{\phi}|_{\Cay_g(\xi)}I_{t,\xi}^{-1}.$$
Equivalently, for any $O\in K_{x,g}$, write
$$O^T\widehat{Q}_\phi O=\matii{\widehat Q_{11,O,\phi}&\widehat Q_{12,O,\phi}}{\widehat Q_{21,O,\phi}&\widehat Q_{22,O,\phi}}\mathrm{~~and~~}O^TQ_\phi O=\matii{Q_{11,O,\phi}&Q_{12,O,\phi}}{Q_{21,O,\phi}&Q_{22,O,\phi}},$$
where every block in the above matrix has size $8\times 8$. Then there exist $I_{t,O,\phi}\in\SO(8)$, $0\leq t\leq 7$, such that $I_{0,O,\phi}=\id$ and
$$\widehat{Q}_{11,O,\phi}=\frac{1}{8}\sum_{t=0}^7I_{t,O,\phi}Q_{11,O,\phi}I_{t,O,\phi}^{-1}.$$
\end{enumerate}
\end{lem}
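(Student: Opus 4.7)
For part (1), the plan is a direct matrix calculation. Rewrite \eqref{Axs_approx} as
\[
\widehat{A}_{x,s}-\id \;=\; \sum_{t=0}^{d-1}\cJ_{t,g}\circ P_s\circ\cJ_{t,g}^{-1},\qquad P_s(\xi):=\langle\xi,\grad\Phi_s(x)\rangle\,\grad\Phi_s(x).
\]
The matrix of the rank-one operator $P_s$ in the basis $\Xi$ has $((l,t),(l',t'))$-entry $X_{l,t,g}(s)X_{l',t',g}(s)$. Integrating against $\overline\rho_\phi\,d\mu_x$ and comparing with \eqref{RM_formula} gives
\[
\int_S\overline\rho_\phi(s)X_{l,t,g}(s)X_{l',t',g}(s)\,d\mu_x(s)=\tfrac{1}{dn}\langle X_{l,t,g},X_{l',t',g}\rangle_{G_\phi}=\tfrac{1}{dn}(Q_\phi)_{(l,t),(l',t')}.
\]
Since conjugating an operator by $\cJ_{t,g}$ conjugates its matrix in $\Xi$ by $J_t$, one obtains ${}_\Xi[\widehat{A}_\phi]_\Xi=\id+\tfrac{1}{dn}\sum_tJ_tQ_\phi J_t^{-1}$, which is the stated formula.

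For parts (2) and (3), the strategy is to exploit that $K_{x,g}=\Psi_{x,g}\,\Spin(9)\,\Psi_{x,g}^{-1}$ acts on $T_xM$ preserving the inner product and permuting the family of Cayley lines, so by a preliminary $K_{x,g}$-transformation one reduces to studying the standard Cayley lines $m_l=\mathrm{span}\{\xi_{l,t,g}\}_{t=0}^{7}$, corresponding to $\OO\cdot e_l\subset\OO^2$ under $\Psi_{x,g}^{-1}$. The core observation for part (2) is the \emph{isoclinic property}: for any two Cayley lines $\ell,m\subset T_xM$, the ratio $\|\pi_m(\xi)\|^2/\|\xi\|^2$ is constant on $\ell\setminus\{0\}$. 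Concretely, taking $\ell=\OO\cdot(1,0)$ and $m=\OO\cdot(1,c)$ with orthonormal basis $\{(e_u,e_uc)/\sqrt{1+|c|^2}\}_{u=0}^{7}$, a short computation using $\mathrm{Re}(\overline{e_u}\,e_{u'})=\delta_{uu'}$ yields $\|\pi_m(\xi)\|^2=|x|^2/(1+|c|^2)$ for $\xi=(x,0)\in\ell$ (the case $m=\OO\cdot(0,1)$ is analogous). Integrating against $\overline\rho_\phi\,d\mu_x$, the quadratic form $\xi\mapsto\langle\widehat{A}_\phi\xi,\xi\rangle$ is a constant multiple of $\|\xi\|^2$ on each Cayley line; polarization combined with symmetry of $\widehat{A}_\phi$ then forces the diagonal $8\times 8$ blocks of $O^T\widehat{Q}_\phi O$ to be scalar matrices.

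For part (3), fix a Cayley line $\ell$, identify $\ell\cong\OO$ via $\Psi_{x,g}^{-1}$, and let $I_{t,\xi}\in\SO(\ell)$ be left multiplication by $e_t$ in this identification, with $I_{0,\xi}=\id$. Writing $a(s)\in\ell\cong\OO$ for the orthogonal projection of $\grad\Phi_s$ onto $\ell$, the block of $Q_\phi$ corresponding to $\ell$ has entries $dn\int_S\overline\rho_\phi a_t(s)a_{t'}(s)\,d\mu_x(s)$. Since $\{e_ua\}_{u=0}^{7}$ is orthogonal in $\OO$ with common norm $|a|$, the elementary identity $\sum_{t=0}^{7}I_{t,\xi}(aa^{T})I_{t,\xi}^{-1}=|a|^{2}\id_\ell$ holds pointwise in $s$; integrating against $\overline\rho_\phi\,d\mu_x$ gives $\tfrac{1}{8}\sum_t I_{t,\xi}Q_\phi|_\ell I_{t,\xi}^{-1}=\tfrac{dn}{8}\int_S\overline\rho_\phi|a(s)|^{2}\,d\mu_x\cdot\id_\ell$. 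The isoclinic computation of part (2) produces the same value for $\widehat{Q}_\phi|_\ell$, proving the identity; the equivalent matrix-form assertion follows by conjugating by $O\in K_{x,g}$.

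The main technical obstacle is the non-associativity of octonion multiplication: inner-product manipulations in parts (2) and (3) (such as $\mathrm{Re}(\overline{e_u c}\cdot e_{u'}c)=|c|^{2}\delta_{uu'}$) must be handled via the cyclic identity $\mathrm{Re}(xy)=\mathrm{Re}(yx)$ and the alternative property of $\OO$ rather than genuine associativity. Once these octonionic identities are secured, the remaining arguments reduce to linear-algebraic averaging parallel to the complex and quaternionic treatment in part (1).
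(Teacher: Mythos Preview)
Your proposal is correct and follows essentially the same line as the paper: part (1) is the same direct matrix computation (you phrase $\widehat A_{x,s}-\id$ as a sum of conjugates of the rank-one projector $P_s$, the paper computes entries $\langle\widehat A_\phi\xi_{l_1,t_1},\xi_{l_2,t_2}\rangle$ directly), and part (2) is the same isoclinic calculation after a $K_{x,g}$-reduction, with your parametrization $m=\OO\cdot(1,c)$ equivalent to the paper's $\eta=(b\cos\theta,ba\sin\theta)$.

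The only noteworthy variation is in part (3). The paper proves the \emph{pointwise} identity
\[
\langle\xi',\pi_{\Cay_g(\eta)}(\xi')\rangle=\sum_{t=0}^{7}\langle I_{t,\xi}\xi',\pi_\eta(I_{t,\xi}\xi')\rangle,\qquad\xi'\in\Cay_g(\xi),
\]
by choosing the explicit orthonormal basis $\{\Psi_{x,g}(\overline{e}_tb\cos\theta,(\overline{e}_tb)a\sin\theta)\}_t$ of $\Cay_g(\eta)$ and matching terms, then integrates in $s$. You instead observe that both $\widehat Q_\phi|_\ell$ and $\tfrac{1}{8}\sum_t I_{t,\xi}Q_\phi|_\ell I_{t,\xi}^{-1}$ are scalar matrices, compute the two scalars separately (the first via part (2), the second via $\sum_t(e_ta)(e_ta)^T=|a|^2\id$), and identify them using the symmetry of the isoclinic angle $\|\pi_m(\xi)\|^2/\|\xi\|^2=\|\pi_\ell(\eta)\|^2/\|\eta\|^2$. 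This is a mild repackaging rather than a different argument; it has the small advantage of avoiding the explicit basis of $\Cay_g(\eta)$, but relies on the extra (easy) symmetry statement, whereas the paper's pointwise identity is self-contained.
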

\begin{proof}
\begin{enumerate}
\item[(1).] Since $\Xi$ is an orthonormal basis for $T_xM$, it suffices to calculate elements of the form
$$\langle \widehat{A}_{\phi}(\xi),\eta\rangle, \quad \xi,\eta\in\Xi.$$
Notice that
\begin{align*}
\langle \widehat{A}_\phi(\xi_{l_1,t_1,g}),\xi_{l_2,t_2,g}\rangle
=&\left\langle \int_S\overline\rho_\phi(s)\widehat{A}_{x,s}(\xi_{l_1,t_1,g})d\mu_x(s),\xi_{l_2,t_2,g}\right\rangle\\
=&\int_S\overline\rho_\phi(s)\langle\xi_{l_1,t_1,g} ,\xi_{l_2,t_2,g}\rangle d\mu_x(s) \\
&+\sum_{t=0}^{d-1}\int_S\overline\rho_\phi(s)\langle\xi_{l_1,t_1,g},\cJ_{t,g}\grad\Phi_s(x)\rangle\langle\xi_{l_2,t_2,g},\cJ_{t,g}\grad\Phi_s(x)\rangle d\mu_x(s) \\
=&\int_S\overline\rho_\phi(s)\langle\xi_{l_1,t_1,g} ,\xi_{l_2,t_2,g}\rangle d\mu_x(s) \\
&+\sum_{t=0}^{d-1}\int_S\overline\rho_\phi(s)\langle\cJ_{t,g}\xi_{l_1,t_1,g},\grad\Phi_s(x)\rangle\langle\cJ_{t,g}\xi_{l_2,t_2,g},\grad\Phi_s(x)\rangle d\mu_x(s) \\
=&\delta_{l_1l_2}\delta_{t_1t_2}+\frac{1}{dn}\sum_{t=0}^{d-1}\langle d\Phi(\cJ_{t,g}\xi_{l_1,t_1,g}),d\Phi(\cJ_{t,g}\xi_{l_2,t_2,g})\rangle_{G_\phi}.
\end{align*}
Therefore the first assertion follows.
\item[(2).] Recall the definition of $\widehat{A}_\phi$ in \eqref{Axs_approx_Cay} and \eqref{A_approx_Cay}, it suffices to show that
\begin{align}\label{Q_diag}
\langle\xi,\pi_{\Cay_g(\eta)}(\xi)\rangle/\|\xi\|^2
\end{align} 
is constant along $\Cay_g(\xi)$ for any choice of $\xi,\eta\in T_xM$. By applying $K_{x,g}$ actions we can assume without loss of generality that $\xi=\Psi_{x,g}(1,0)$ and $\eta=\Psi_{x,g}(b\cos\theta, ba\sin\theta)$, where $\theta\in\RR$ and $a,b$ are unit octonions. Then for any unit octonion $c$ and any $\xi_c=\Psi_{x,g}(c,0)$, we have
$$\pi_{\Cay_g(\eta)}(\xi_c)=\Psi_{x,g}(c\cos^2\theta,ca\sin\theta\cos\theta).$$
Hence
\begin{align*}
\langle \xi_c,\pi_{\Cay_{g}(\eta))}(\xi_c)\rangle=\langle (c,0),(c\cos^2\theta,ca\sin\theta\cos\theta)\rangle=\cos^2\theta,
\end{align*}
which proves that \eqref{Q_diag} is constant along $\Cay_g(\xi)$.
\item[(3).] The computations in the first assertion imply that the operator on $T_xM$ defined by
$$\xi\to \int_S\overline\rho_\phi(s)\pi_{\grad\Phi_s(x)}(\xi)d\mu_x(s)$$
has matrix expression exactly equal to $Q_\phi/16$ under the basis $\Xi$, where $\pi_{\grad\Phi_s(x)}$ is the orthogonal projection onto $\RR\grad\Phi_s(x)$. It suffices to show that for any $0\neq\xi,\eta\in T_xM$, there exists some $I_{t,\xi,\phi}\in\SO(\Cay_g(\xi))$ with $0\leq t\leq 7$ such that
\begin{align}\label{local_avg_Cay}
\langle\xi',\pi_{\Cay_g(\eta)}(\xi')\rangle=\sum_{t=0}^7\langle I_{t,\xi,\phi}(\xi'),\pi_{\eta}(I_{t,\xi,\phi}(\xi'))\rangle,\quad\forall \xi'\in\Cay_g(\xi).
\end{align}
By applying $K_{x,g}$ actions we can assume without loss of generality that $\xi=\Psi_{x,g}(1,0)$ and $\eta=\Psi_{x,g}(b\cos\theta,ba\sin\theta)$, where $a,b$ are unit octonions. Let $e_0=1,e_1,...,e_7$ be the standard orthonormal basis for $\OO$. Then 
$$\{\Psi_{x,g}(\overline{e}_tb\cos\theta,(\overline{e}_tb)a\sin\theta):0\leq t\leq 7\}$$ 
is an orthonormal basis for $\Cay_g(\eta)$. Hence
$$\langle\xi',\pi_{\Cay_g(\eta)}(\xi')\rangle=\sum_{t=0}^7\langle\xi',\pi_{\Psi_{x,g}(\overline{e}_tb\cos\theta,(\overline{e}_tb)a\sin\theta)}\xi'\rangle, \forall \xi'\in\Cay_g(\xi).$$
Notice that any $\xi'$ has the form $\Psi_{x,g}(c,0)$ for some unit octonion $c$ and
\begin{align*}
&\langle\xi',\pi_{\Psi_{x,g}(\overline{e}_tb\cos\theta,(\overline{e}_tb)a\sin\theta)}\xi'\rangle\\
=&\langle\Psi_{x,g}(c,0),\Psi_{x,g}(\overline{e}_tb\cos\theta,(\overline{e}_tb)a\sin\theta)\rangle^2 \\
=&\langle c,\overline{e}_tb\cos\theta\rangle^2 \\
=&\langle e_tc,b\cos\theta\rangle^2 
=\langle \Psi_{x,g}(e_tc,0),\eta\rangle^2 
=\langle \Psi_{x,g}(e_t\Psi_{x,g}^{-1}(\xi')),\pi_\eta(\Psi_{x,g}(e_t\Psi_{x,g}^{-1}(\xi')))\rangle,\quad 0\leq t\leq 7.
\end{align*}
Choose $I_{t,\xi}(\cdot)=\Psi_{x,g}(e_t\Psi_{x,g}^{-1}(\cdot))$ and \eqref{local_avg_Cay} follows.\qedhere
\end{enumerate}
\end{proof}
\begin{rmk}
A similar version of the second and the third assertions also hold for complex and quaternionic hyperbolic spaces. In particular, we can choose the orthogonal maps $I_{t,\xi}$ in the third assertion to be restrictions of $\cJ_{t,g}$ onto $\KK$-lines when $\KK\neq \OO$. It is not hard to verify that the first assertion implies the counterparts for the second and third assertion in the complex and quaternionic case. However, due to the remark of Corollary \ref{curv_data_OH^2}, there is no obvious way to find $\cJ_{t,g}$ due to non-associativity of octonionic multiplication, making the Cayley hyperbolic case more complicated.
\end{rmk}

In order to understand $E_\phi:T_\phi\cL\to T_xM$, recall that $\cV_\phi=\mathrm{span}Z_\phi$. We first claim that
$$E_\phi|_{\cV_\phi^{\perp}}=0,$$
where $\cV^{\perp}_\phi$ denotes the orthogonal complement in $T_\phi\cL=\cL$ with respect to $G_\phi$. This is because for any $X\in \cV^\perp_\phi$ and any $\xi_{l,t,g}\in\Xi$,
\begin{align*}
\langle E_\phi(X),\xi_{l,t,g}\rangle=&(dn+d)\int_S\overline\rho_\phi(s)X(s)\langle\grad\Phi_s(x),\xi_{l,t,g}\rangle d\mu_x(s) \\
=&(dn+d)\int_S\overline\rho_\phi(s)X(s)X_{l,t,g}(s) d\mu_x(s) 
=\frac{n+1}{n}\langle X,X_{l,t,g}\rangle_{G_\phi}=0.
\end{align*}
Therefore
$$E_\phi|_{\cV^{\perp}_\phi}=0$$
since $\Xi$ is a basis for $T_xM$.

Now it suffices to understand $E_\phi|_{\cV_\phi}$. Let $W_\phi$ be an ordered orthonormal basis for $\cV_\phi$. We have the following result.
\begin{lem}\label{E_formula}
$$~_{\Xi}[E_\phi|_{\cV_\phi}]_{Z_\phi}=\frac{n+1}{n}Q_{\phi}$$
and
$$~_{\Xi}[E_\phi|_{\cV_\phi}]_{W_\phi}=\frac{n+1}{n}~_{W_\phi} [\id]_{Z_\phi}^T,$$
where $_{W_\phi} [\id]_{Z_\phi}^T$ denotes the transpose of $_{W_\phi} [\id]_{Z_\phi}$.
\end{lem}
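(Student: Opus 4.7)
The plan is to prove both identities by direct computation from the definition $E_\phi(X)=(dn+d)\int_S X(s)\overline\rho_\phi(s)\grad\Phi_s(x)\,d\mu_x(s)$ together with the formula $G_\phi(X,Y)=dn\int_S X(s)Y(s)\overline\rho_\phi(s)\,d\mu_x(s)$ from Definition \ref{RML}. Once the matrix entries are expressed in terms of $G_\phi$-inner products, both identities should reduce to bookkeeping.

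For the first identity, I would compute the $((m,t'),(l,t))$-entry of $_{\Xi}[E_\phi|_{\cV_\phi}]_{Z_\phi}$, which (since $\Xi$ is $g$-orthonormal) equals $\langle E_\phi(X_{l,t,g}),\xi_{m,t',g}\rangle$. Plugging in the definition of $E_\phi$, using $\langle\grad\Phi_s(x),\xi_{m,t',g}\rangle=X_{m,t',g}(s)$, and then recognizing the resulting integral $\int_S X_{l,t,g}(s)X_{m,t',g}(s)\overline\rho_\phi(s)\,d\mu_x(s)$ as $\frac{1}{dn}\langle X_{l,t,g},X_{m,t',g}\rangle_{G_\phi}$, one obtains the factor $(dn+d)/(dn)=(n+1)/n$ in front of this $G_\phi$-pairing. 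By symmetry of $G_\phi$, this pairing equals the $(t',t)$-entry of the block $Q_{ml,\phi}$ as defined in \eqref{big_fat_matrix}, proving $_{\Xi}[E_\phi|_{\cV_\phi}]_{Z_\phi}=\frac{n+1}{n}Q_\phi$.

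For the second identity, I would perform the same calculation with $W_k$, the $k$-th element of $W_\phi$, in place of $X_{l,t,g}$. Since the argument only used that the input is a function on $S$, exactly the same step gives
$$\langle E_\phi(W_k),\xi_{m,t',g}\rangle=\tfrac{n+1}{n}\langle W_k,X_{m,t',g}\rangle_{G_\phi}=\tfrac{n+1}{n}\langle X_{m,t',g},W_k\rangle_{G_\phi}.$$
On the other hand, since $W_\phi$ is $G_\phi$-orthonormal, expanding $X_{l,t,g}=\sum_k\langle X_{l,t,g},W_k\rangle_{G_\phi}W_k$ shows that $(_{W_\phi}[\id]_{Z_\phi})_{k,(l,t)}=\langle X_{l,t,g},W_k\rangle_{G_\phi}$, so the transpose has $((m,t'),k)$-entry equal to $\langle X_{m,t',g},W_k\rangle_{G_\phi}$. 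Comparing the two expressions gives the second formula.

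The only potentially delicate point is keeping the matrix index conventions consistent (which side the basis expansion coefficients land on, and who is transposed). I do not expect any real analytical difficulty here; the lemma is essentially a restatement of the already-computed identity $\langle E_\phi(X),v\rangle=\frac{n+1}{n}G_\phi(X,d_x\Phi(v))$ (shown in the proof of Lemma \ref{RM_verification}), organized into two convenient matrix forms that will be used for Jacobian estimates in the next section.
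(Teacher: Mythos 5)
Your proof is correct. For the first identity you compute the matrix entries exactly as the paper does, reducing them to $\frac{n+1}{n}\langle X_{l,t,g},X_{m,t',g}\rangle_{G_\phi}$ and matching against the definition of $Q_\phi$ in \eqref{big_fat_matrix}. For the second identity your route differs slightly in bookkeeping: you recompute the entries $\langle E_\phi(W_k),\xi_{m,t',g}\rangle$ directly and match them against the entries of $~_{W_\phi}[\id]_{Z_\phi}^T$, whereas the paper instead uses the change-of-basis relation $~_{\Xi}[E_\phi|_{\cV_\phi}]_{W_\phi}=~_{\Xi}[E_\phi|_{\cV_\phi}]_{Z_\phi}\cdot~_{Z_\phi}[\id]_{W_\phi}$ together with the factorization $Q_\phi=U_\phi^T U_\phi$ (with $U_\phi=~_{W_\phi}[\id]_{Z_\phi}$) and cancels $U_\phi U_\phi^{-1}$. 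Both routes rest on the same core identity $\langle E_\phi(X),\xi\rangle=\frac{n+1}{n}\langle X,d\Phi(\xi)\rangle_{G_\phi}$; yours avoids the $Q_\phi=U_\phi^TU_\phi$ factorization while the paper's makes that factorization explicit (which it then reuses elsewhere), so the difference is stylistic rather than substantive.
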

\begin{proof}
Applying a change of basis, we have
$$_{\Xi}[E_\phi|_{\cV_\phi}]_{W_\phi}=~_{\Xi}[E_\phi|_{\cV_\phi}]_{ Z_\phi}\cdot~_{Z_\phi}[\id]_{W_\phi}.$$
Since $\Xi$ is orthonormal in $T_xM$, $_{\Xi}[E_\phi|_\cV]_{Z_\phi}$ has its entries in the form of $\langle E_\phi(X),\xi\rangle$, where $X\in Z_\phi$ and $\xi\in\Xi$.
Notice that 
\begin{align*}
\langle E_\phi(X_{l_1,t_1,g}),\xi_{l_2,t_2,g}\rangle=&(dn+d)\int_S\overline\rho_\phi(s)X_{l_1,t_1,g}(s)\langle\grad\Phi_s(x),\xi_{l_2,t_2,g}\rangle d\mu_x(s) \\
=&(dn+d)\int_S\overline\rho_\phi(s)X_{l_1,t_1,g}(s)X_{l_2,t_2,g}(s) d\mu_x(s) 
=\frac{n+1}{n}\langle X_{l_1,t_1,g},X_{l_2,t_2,g}\rangle_{G_\phi}.
\end{align*}
Therefore by \eqref{big_fat_matrix}
$$_{\Xi}[E_\phi|_{\cV_\phi}]_{ Z_\phi}=\frac{n+1}{n}Q_{\phi}.$$
Since $W_\phi$ is orthonormal in $\cV_\phi$, we have
\begin{align*}
Q_{\phi} 
=&\left(
\begin{array}{ccc}
\langle  X_{m,0,g}, X_{l,0,g}\rangle_{G_\phi}&... &\langle  X_{m,0,g}, X_{l,d-1,g}\rangle_{G_\phi}\\
...&... &...\\
\langle  X_{m,d-1,g}, X_{l,0,g}\rangle_{G_\phi}&... & \langle  X_{m,d-1,g}, X_{l,d-1,g}\rangle_{G_\phi}
\end{array}
\right)_{1\leq m,l\leq n}
=~_{W_\phi}[\id]_{Z_\phi}^T\cdot~_{W_\phi}[\id]_{Z_\phi}.
\end{align*}
Therefore
\begin{align*}
_{\Xi}[E_\phi|_{\cV_\phi}]_{W_\phi}=&~_{\Xi}[E_\phi|_{\cV_\phi}]_{Z_\phi}\cdot~_{Z_\phi}[\id]_{W_\phi} \\
=&\frac{n+1}{n}~_{W_\phi}[\id]_{Z_\phi}^T\cdot~_{W_\phi}[\id]_{ Z_\phi}\cdot~_{Z_\phi}[\id]_{W_\phi} 
=\frac{n+1}{n}~_{W_\phi}[\id]_{Z_\phi}^T\cdot~_{W_\phi}[\id]_{W_\phi}
=\frac{n+1}{n}~_{W_\phi}[\id]_{Z_\phi}^T.\qedhere
\end{align*}
\end{proof}
\begin{rmk}
For simplicity we write $U_\phi=~_{W_\phi}[\id]_{ Z_\phi}$ and hence $Q_{\phi}=U_\phi^TU_\phi$.  
\end{rmk}
\begin{notation}
For any $m\in\ZZ_+$ and any matrix $H\in\mathrm{Mat}_{m\times m}(\RR)$, we define its norm by
$$\|H\|=\left(\sup_{v\neq 0,v\in\mathrm{Mat}_{m\times 1}(\RR)=\RR^m}\frac{v^TH^THv}{v^Tv}\right)^{\frac{1}{2}}=\sup_{\|v\|=1}\|Hv\|.$$
where $\|v\|=\sqrt{v^Tv}$ denotes the Euclidean norm of a real vector $v\in\mathrm{Mat}_{m\times 1}(\RR)$. In particular, $H$ is $\|H\|$-Lipschitz, regarded as an endomorphism on a finite dimensional Euclidean space (with a prescibed orthonormal basis).
\end{notation}
Before estimating determinants, we first recall a linear algebra fact from \cite[B.2 Lemme]{BCG1}.

\begin{lem}[Besson-Courtois-Gallot, \cite{BCG1}]\label{det_sum_ineq}
The determinant function is log-concave on positive semi-definite matrices with real entries. As a corollary, let $A_1,A_2,...,A_n$ be real positive definite $m\times m$ matrices such that $\det(A_1)=\det(A_2)=...=\det(A_n)=c>0$. Then
$$\det\left(\frac{1}{n}\sum_{j=1}^nA_j\right)\geq c$$
with equality holds only when $A_1=A_2=...=A_n$.
\end{lem}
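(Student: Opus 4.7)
The plan is to prove the two assertions in turn: first establish that $\log\det$ is concave on the cone of positive definite matrices (strictly so away from the diagonal), and then deduce the corollary by applying Jensen's inequality to the uniform distribution on the $A_j$. Because the result is classical, I expect no conceptual surprises; the only care is in the equality case.

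For the concavity claim, fix positive definite $A,B$ and $t\in[0,1]$. The key reduction is simultaneous diagonalization. Setting $C=A^{-1/2}BA^{-1/2}$, which is symmetric positive definite with eigenvalues $\lambda_1,\dots,\lambda_m>0$, one factors out $A^{1/2}$ from $(1-t)A+tB=A^{1/2}\bigl((1-t)I+tC\bigr)A^{1/2}$ to obtain
$$\log\det\bigl((1-t)A+tB\bigr)=\log\det A+\sum_{i=1}^{m}\log\bigl((1-t)+t\lambda_i\bigr),$$
while the linearly interpolated right-hand side reads $\log\det A+t\sum_i\log\lambda_i$. Each scalar map $t\mapsto\log((1-t)+t\lambda)$ is concave (its second derivative equals $-(\lambda-1)^2/((1-t)+t\lambda)^2$) and strictly concave unless $\lambda=1$. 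Summing over $i$ yields concavity along the segment, with strict inequality for $t\in(0,1)$ unless every $\lambda_i=1$, i.e.\ $C=I$, equivalently $B=A$. One extends to positive semi-definite matrices by approximation from the interior; the inequality persists in the extended real line, with $-\infty$ allowed on the boundary where a determinant vanishes.

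For the corollary, Jensen's inequality for the concave functional $\log\det$ gives
$$\log\det\!\left(\frac{1}{n}\sum_{j=1}^{n}A_j\right)\;\geq\;\frac{1}{n}\sum_{j=1}^{n}\log\det A_j\;=\;\log c,$$
which is the stated bound. The equality case I would handle by induction on $n$. The base case $n=2$ is exactly the strict part above. For $n\geq 3$, writing
$$\frac{1}{n}\sum_{j=1}^{n}A_j=\frac{n-1}{n}\left(\frac{1}{n-1}\sum_{j=1}^{n-1}A_j\right)+\frac{1}{n}A_n$$
and applying the $n=2$ strict concavity shows that equality forces $A_n$ to equal the average of $A_1,\dots,A_{n-1}$; combined with Jensen applied to that smaller average, the inductive hypothesis then forces $A_1=\cdots=A_{n-1}$, and hence $A_n$ equals this common value. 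The only obstacle is the careful bookkeeping of the strict-inequality clause through the induction, and this is routine once the $n=2$ strict case is in hand.
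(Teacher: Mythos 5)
Your argument is correct. It differs in strategy from the (commented-out) proof in the paper's source; the published text ultimately elides the proof in favor of a citation to Besson--Courtois--Gallot. The source's approach bypasses the general log-concavity statement altogether: it inducts on $n$, normalizes $A_1=\id$, sets $B=\tfrac1{n-1}\sum_{j\ge 2}A_j$, expands $\det\bigl(\tfrac1n\id+\tfrac{n-1}{n}B\bigr)$ in elementary symmetric functions of the eigenvalues of $B$, and applies Maclaurin's inequality (the AM--GM bound $e_j(\lambda)\ge\binom{m}{j}(\det B)^{j/m}$) term by term, closing the induction with the inductive bound $\det B\ge 1$. Your route establishes log-concavity directly via the simultaneous reduction $(1-t)A+tB=A^{1/2}\bigl((1-t)I+tC\bigr)A^{1/2}$ and the strict concavity of $t\mapsto\log((1-t)+t\lambda)$, then deduces the bound by Jensen and chases equality by a two-term convexity decomposition. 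Your version has the advantage of actually proving both sentences of the lemma as stated --- the source's computation never exhibits log-concavity per se --- and the two-point strict concavity packages the equality analysis more cleanly than tracking equality through nested symmetric-function inequalities; the source's approach, in exchange, is self-contained at the level of scalar inequalities and avoids invoking Jensen for a matrix-valued functional. Both are standard and correct.
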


\begin{lem}\label{easy_ineq}
\begin{enumerate}
\item[(1).]$$\det\left(\id+\frac{1}{n}\widehat{Q}_\phi\right)\geq\det\left(\id+\frac{1}{n}U_\phi^TU_\phi\right);$$
\item[(2).]The trace of $\widehat{Q}_\phi$ and $U_\phi^TU_\phi$ are both equal to $dn$. Moreover, let $0\leq\eta_1\leq...\leq\eta_{dn}$ be eigenvalues of $\widehat{Q}_\phi$ and $0\leq\lambda_1\leq...\leq\lambda_{dn}$ be eigenvalues of $U_\phi^TU_\phi$, then 
\begin{align}\label{eigenval_ineq}
\lambda_1\leq\eta_1\leq1\leq\eta_{dn}\leq\lambda_{dn}\leq dn.
\end{align}
\item[(3).]$$\det\left(\id+\frac{1}{n}U_\phi^TU_\phi\right)\geq\ \left(\frac{n+1}{n}\right)^{dn}\det(U_\phi^TU_\phi)^\frac{1}{n+1}$$
and $$\det(U_\phi^TU_\phi)\leq \id.$$
Equality in both inequalities holds if and only if $U_\phi^TU_\phi=\id$.
\item[(4).]  $\|U_\phi\|,\|U^T_\phi\|\leq\sqrt{dn}$.
\end{enumerate}
\end{lem}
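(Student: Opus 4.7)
The plan is to handle the four assertions in the order (4)+(2), (1), (3), since the trace identity in (2) and the norm bound in (4) share a computation, and the hardest step by far is (1) in the Cayley case.

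\textbf{Trace identity in (2) and assertion (4).} First I would compute $\tr(Q_\phi)$ directly from the definition of $G_\phi$ and the expression $(Q_\phi)_{(l_1,t_1),(l_2,t_2)} = \langle X_{l_1,t_1,g},X_{l_2,t_2,g}\rangle_{G_\phi}$:
\begin{align*}
\tr(Q_\phi)
 = nd\int_S\sum_{l,t}d\Phi_s(\xi_{l,t,g})^2\,\overline\rho_\phi(s)\,d\mu_x(s)
 = nd\int_S\|\grad\Phi_s(x)\|^2\,\overline\rho_\phi\,d\mu_x
 = nd,
\end{align*}
using that $\{\xi_{l,t,g}\}$ is orthonormal, $|\grad\Phi_s|\equiv 1$, and $\overline\rho_\phi$ is a probability density with respect to $\mu_x$. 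For $\tr(\widehat Q_\phi)=n\int_S\overline\rho_\phi\tr(\widehat A_{x,s}-\id)\,d\mu_x$: in the complex/quaternionic case $\widehat A_{x,s}-\id=\sum_t\langle\cdot,\cJ_{t,g}\grad\Phi_s\rangle\cJ_{t,g}\grad\Phi_s$ has trace $\sum_t\|\cJ_{t,g}\grad\Phi_s\|^2=d$ since each $\cJ_{t,g}$ is an isometry; in the Cayley case $\widehat A_{x,s}-\id$ is orthogonal projection onto an $8$-dimensional Cayley line, again with trace $d$. So $\tr(\widehat Q_\phi)=dn$. Assertion (4) is immediate: $\|U_\phi\|^2=\lambda_{\max}(U_\phi^TU_\phi)\leq\tr(Q_\phi)=dn$, and $\|U_\phi^T\|^2=\lambda_{\max}(U_\phi U_\phi^T)=\lambda_{\max}(U_\phi^TU_\phi)\leq dn$.

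\textbf{Eigenvalue interlacing in (2).} Positivity combined with the trace bound forces $\eta_1\leq 1\leq\eta_{dn}$ and $\lambda_{dn}\leq dn$. For $\lambda_1\leq\eta_1$ and $\eta_{dn}\leq\lambda_{dn}$, in the complex/quaternionic case Lemma \ref{A_formula}(1) writes $\widehat Q_\phi=\frac{1}{d}\sum_t J_tQ_\phi J_t^{-1}$ with $J_t$ orthogonal (transition matrices between orthonormal bases), so for any unit vector $v$,
$$v^T\widehat Q_\phi v=\frac{1}{d}\sum_t(J_t^{-1}v)^TQ_\phi(J_t^{-1}v)\in[\lambda_1,\lambda_{dn}],$$
and the Rayleigh quotient characterization of extreme eigenvalues gives the interlacing. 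In the Cayley case every unit $v$ lies in its Cayley line $\Cay_g(v)$, and Lemma \ref{A_formula}(3) provides the analogous averaging identity after restriction to this $8$-dimensional subspace; the same Rayleigh argument applies.

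\textbf{Assertion (1).} In the complex/quaternionic case Lemma \ref{A_formula}(1) rewrites as
$$\id+\tfrac{1}{n}\widehat Q_\phi=\tfrac{1}{d}\sum_{t=0}^{d-1}J_t\bigl(\id+\tfrac{1}{n}Q_\phi\bigr)J_t^{-1},$$
and orthogonal conjugation preserves determinants, so all $d$ summands have determinant $\det(\id+\tfrac{1}{n}U_\phi^TU_\phi)$. The corollary in Lemma \ref{det_sum_ineq} then yields the inequality. The Cayley case is the main obstacle: Lemma \ref{A_formula}(3) only supplies the averaging identity on each Cayley line separately rather than as a single global relation. I would exploit Lemma \ref{A_formula}(2), which forces the diagonal $8\times 8$ blocks of $O^T\widehat Q_\phi O$ to be scalar matrices for every $O\in K_{x,g}$, to reduce $\det(\id+\tfrac{1}{n}\widehat Q_\phi)$ to an expression involving only these diagonal block scalars and the singular values of the off-diagonal block, then apply the block-wise averaging identity of Lemma \ref{A_formula}(3) together with log-concavity of $\det$ to bound each contribution from below by the corresponding quantity for $Q_\phi$.

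\textbf{Assertion (3).} Diagonalize $U_\phi^TU_\phi$ with non-negative eigenvalues $\lambda_1,\dots,\lambda_{dn}$. Part (2) gives $\sum\lambda_i=dn$, so AM--GM yields $\det(U_\phi^TU_\phi)=\prod\lambda_i\leq 1$ with equality iff every $\lambda_i=1$, i.e., $U_\phi^TU_\phi=\id$. The first inequality reduces to the term-by-term estimate
$$1+\tfrac{\lambda}{n}\geq\tfrac{n+1}{n}\lambda^{1/(n+1)}\qquad(\lambda\geq 0),$$
which I would verify by minimizing $f(\lambda)=\log(1+\lambda/n)-\tfrac{1}{n+1}\log\lambda$: its unique critical point is $\lambda=1$, where $f(1)=\log\tfrac{n+1}{n}$ and $f''(1)=\tfrac{n}{(n+1)^2}>0$, so $f\geq\log\tfrac{n+1}{n}$. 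Taking the product over $i=1,\dots,dn$ and noting that equality forces every $\lambda_i=1$ completes the proof.
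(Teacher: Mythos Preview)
Your arguments for (2), (3), (4) are correct and essentially match the paper, with two minor stylistic differences: for $\tr(\widehat Q_\phi)$ you compute $\tr(\widehat A_{x,s}-\id)=d$ directly rather than inferring trace equality from the conjugation-averaging formula, and for (3) you use a one-variable calculus minimization instead of the paper's weighted AM--GM (writing $1+\lambda/n$ as the average of $n$ copies of $1$ and one copy of $\lambda$). Both variants are fine. For (1) in the complex/quaternionic case you reproduce the paper's proof.

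The gap is in your outline for (1) when $\KK=\OO$. You propose to reduce $\det(\id+\tfrac1n\widehat Q_\phi)$ to an expression in the two diagonal scalars and the singular values of the off-diagonal block $L$, then compare block-by-block with $Q_\phi$ via Lemma~\ref{A_formula}(3) and log-concavity. The problem is that Lemma~\ref{A_formula}(3) relates only the diagonal blocks $\widehat Q_{11,O,\phi}$ and $Q_{11,O,\phi}$; it gives no control over $L$ in terms of $Q_{12,O,\phi}$, so there is no obvious way to ``bound each contribution from below'' once the singular values of $L$ enter. The paper sidesteps this entirely: from Lemma~\ref{A_formula}(2) one first argues that the largest (resp.\ smallest) eigenspace of $\widehat A_\phi$ is a full Cayley line, so there exists a \emph{specific} $O\in K_{x,g}$ for which $L=0$ and $O^T(\id+\tfrac1n\widehat Q_\phi)O=\mathrm{diag}(\eta_{\max}\id_8,\eta_{\min}\id_8)$. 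At that point $\det(\id+\tfrac1n\widehat Q_\phi)=\eta_{\max}^8\eta_{\min}^8$; Lemma~\ref{A_formula}(3) and log-concavity give $\eta_{\max}^8\geq\det(\id+\tfrac1nQ_{11,O,\phi})$ and likewise for the $(2,2)$ block, and one finishes with the block inequality $\det\bigl(\begin{smallmatrix}A&B\\B^T&C\end{smallmatrix}\bigr)\leq\det A\det C$ for positive semi-definite matrices applied to $\id+\tfrac1nQ_\phi$. The missing step in your plan is precisely this choice of $O$ making $L=0$; once you add it, the off-diagonal singular values disappear and your outline becomes the paper's proof.
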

\begin{proof}
\begin{enumerate}
\item[(1).] When $\KK\neq\OO$, this follows from Lemma \ref{det_sum_ineq} and the first assertion in Lemma \ref{A_formula}. It remains to prove for the Cayley hyperbolic case. Let $\xi$ be a unit eigenvector of $\widehat{A}_\phi$ having the largest eigenvalue denoted by $\eta_{\max}$. Since the second assertion in Lemma \ref{A_formula} implies that $\langle\xi,\widehat{A}_\phi(\xi)\rangle/\|\xi\|^2$ is constant on $\Cay_g(\xi)$, all non-zero vectors in $\Cay_g(\xi)$ are eigenvalues of $\widehat{A}_\phi$ with eigenvalue $\eta_{\max}$. Let $\Cay_g(\xi')$ be the orthogonal complement of $\Cay_g(\xi)$. Since $\widehat{A}_\phi$ is self-adjoint, $\Cay_g(\xi')$ is an invariant subspace of $\widehat{A}_\phi$ and non-zero vectors in this subspace are all eigenvectors of $\widehat{A}_\phi$ for some common eigenvalue $\eta_{\min}$ due to the second assertion in Lemma \ref{A_formula}. Therefore there exists some $O\in K_{x,g}\subset\SO(T_xM)$ such that 
$$O^T\left(\id+\frac{1}{n}\widehat{Q}_\phi\right)O=\matii{\eta_{\max}\id&0}{0&\eta_{\min}\id},$$
where every block in the above matrix has size $8\times 8$. The assertion then follows from Lemma \ref{det_sum_ineq}, the third assertion in Lemma \ref{A_formula} and the fact that for any positive semi-definite matrix $\matii{A&B}{B^T&C}$ with $A,B,C\in\mathrm{Mat}_{m\times m}(\RR)$, we have
$$\det\matii{A&B}{B^T&C}\leq\det(A)\det(C).$$
\item[(2).] Notice that
\begin{align*}
\tr (U_\phi^TU_\phi)
=&\sum_{l=1}^n\sum_{t=0}^{d-1}\langle X_{l,t,g},X_{l,t,g}\rangle_{G_\phi} \\
=&\sum_{l=1}^n\sum_{t=0}^{d-1}dn\int_S\overline\rho_\phi(s)\langle\grad\Phi_s,\xi_{l,t,g}\rangle^2d\mu_x(s) \\
=&dn\int_S\overline\rho_\phi(s)\langle\grad\Phi_s,\grad\Phi_s\rangle^2d\mu_x(s) 
=dn\int_S\overline\rho_\phi(s)d\mu_x(s)=dn.
\end{align*}
Therefore the trace of $\widehat{Q}_\phi$ and $U_\phi^TU_\phi$ are both equal to $dn$, which follows from Lemma \ref{A_formula}. The fact that $\lambda_1\leq\eta_1\leq\eta_{dn}\leq\lambda_{dn}$ follows directly from Lemma \ref{A_formula} and two linear algebra facts listed below:
\begin{enumerate}
\item[(a).] Let $A$ be a positive semi-definite matrix and $O_j$ be orthogonal matrices with $1\leq j\leq m$. Denoted by $k_{\max}$, $k_{\min}$ the largest and the smallest eigenvalues of $A$. Let $l_{\max}$, $l_{\min}$ be the largest and the smallest eigenvalues of $\left(\sum_{j=1}^m O_jAO_j^T\right)/m$. Then $k_{\min}\leq l_{\min}\leq l_{\max}\leq k_{\max}$. (This is used when $\KK=\CC$, $\HH$ or $\OO$.)
\item[(b).] Let $\matii{A&B}{B^T&C}$ be a positive semi-definite matrix with $A,B,C\in \mathrm{Mat}_{m\times m}(R)$. Let $k_{\max}$, $k_{\min}$ be the largest and the smallest eigenvalues of $\matii{A&B}{B^T&C}$ respectively. Denoted by $l_{\max}$, $l_{\min}$ the largest and the smallest eigenvalues of $\matii{A&0}{0&C}$ respectively. Then $k_{\min}\leq l_{\min}\leq l_{\max}\leq k_{\max}$. (This is used when $\KK=\OO$.)
\end{enumerate}
\item[(3).] 
Following the notation in the second assertion, we have
\begin{align}\label{trace}
\sum_{t=1}^{dn}\lambda_t=dn.
\end{align}
Therefore by Lemma \ref{det_sum_ineq}
\begin{align*}
\det\left(\id+\frac{1}{n}U^T_{\phi}U_\phi\right)
=\prod_{l=1}^{dn}\left(1+\frac{1}{n}\lambda_l\right) 
=&\prod_{l=1}^{dn}\frac{n+1}{n}\cdot\frac{n\cdot 1+\lambda_l}{n+1} \\
\geq&\left(\frac{n+1}{n}\right)^{dn}\prod_{l=1}^{dn}(1^{n}\cdot\lambda_l)^{\frac{1}{n+1}} \\
=&\left(\frac{n+1}{n}\right)^{dn}\left(\prod_{l=1}^{dn}\lambda_l\right)^{\frac{1}{n+1}} 
=\left(\frac{n+1}{n}\right)^{dn}\det(U_\phi^TU_\phi)^{\frac{1}{n+1}}.
\end{align*}
The inequality $\det(U_\phi^TU_\phi)\leq \id$ follows directly by applying the arithmetic mean-geometric mean inequality to \eqref{trace}. Equal signs in both inequalities are achieved if and only if all $\lambda_l=1$. In other words, $U_\phi^TU_\phi=\id$.  
\item[(4).] By \eqref{trace}, $\|U_\phi^TU_\phi\|\leq dn$. Hence $\|(U_\phi^TU_\phi)^\frac{1}{2}\|\leq\sqrt{dn}$. Since $U_\phi=K(U_\phi^TU_\phi)^\frac{1}{2}$ for some $K\in \mathrm{O}(dn)$. Therefore, by the fact that any element in $\mathrm{O}(dn)$ has norm $1$, $\|U_\phi\|,\|U_\phi^T\|\leq\sqrt{dn}$. \qedhere
\end{enumerate}
\end{proof}
\begin{cor}\label{AE_det_ineq}
The maps $E_\phi $ and $\widehat{A}_\phi^{-1}\circ E_\phi:T_\phi\cL\to T_xM$ is $(n+1)\sqrt{d/n}$-Lipschitz and the latter one is volume non-increasing. In particular, when $g=g_0$, $P$ is $(n+1)\sqrt{d/n}$-Lipschitz and volume non-increasing and the first two assertion in Proposition \ref{key_prop} holds when $g=g_0$.
\end{cor}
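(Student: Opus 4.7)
The plan is to assemble the matrix expressions from Lemma \ref{E_formula} together with the norm and determinant estimates of Lemma \ref{easy_ineq} into the desired bounds, and then transfer them to $P$ itself via Lemma \ref{AE_decomp} in the special case $g=g_0$.

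For the Lipschitz bound, I would first observe that $E_\phi$ vanishes on $\cV_\phi^{\perp}$, so it suffices to bound $E_\phi|_{\cV_\phi}$. In the $G_\phi$-orthonormal basis $W_\phi$ of $\cV_\phi$ and the $g$-orthonormal basis $\Xi$ of $T_xM$, Lemma \ref{E_formula} identifies $E_\phi|_{\cV_\phi}$ with the matrix $\tfrac{n+1}{n}U_\phi^T$. Combined with Lemma \ref{easy_ineq}(4), this gives operator norm at most $\tfrac{n+1}{n}\sqrt{dn}=(n+1)\sqrt{d/n}$. For the composition $\widehat{A}_\phi^{-1}\circ E_\phi$, I would use Lemma \ref{A_formula} to write $\widehat{A}_\phi=\id+\tfrac{1}{n}\widehat{Q}_\phi$ with $\widehat{Q}_\phi$ manifestly positive semi-definite, so $\widehat{A}_\phi\geq\id$ and $\|\widehat{A}_\phi^{-1}\|_{op}\leq 1$; the Lipschitz bound therefore persists under the composition.

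Next, for the Jacobian inequality, the supremum defining $\Jac_G(E_\phi)$ is realized on $\cV_\phi$ because $E_\phi$ annihilates its $G_\phi$-orthogonal complement, so Lemma \ref{E_formula} gives $\Jac_G(E_\phi)=\bigl(\tfrac{n+1}{n}\bigr)^{dn}\sqrt{\det(U_\phi^{T}U_\phi)}$. Chaining Lemma \ref{easy_ineq}(1) and (3) to bound the denominator,
\[
\det(\widehat{A}_\phi)\ \geq\ \det\!\left(\id+\tfrac{1}{n}U_\phi^{T}U_\phi\right)\ \geq\ \left(\tfrac{n+1}{n}\right)^{dn}\det(U_\phi^{T}U_\phi)^{1/(n+1)},
\]
yields
\[
\Jac(\widehat{A}_\phi^{-1}\circ E_\phi)\ \leq\ \det(U_\phi^{T}U_\phi)^{\frac{n-1}{2(n+1)}}\ \leq\ 1,
\]
since the base is at most $1$ by Lemma \ref{easy_ineq}(3) and the exponent is non-negative.

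Finally, when $g=g_0$ the constructions in Subsections \ref{ss51}--\ref{ss52} force $\widehat{A}_\phi=A_\phi$, so Lemma \ref{AE_decomp} gives $d_\phi P=\widehat{A}_\phi^{-1}\circ E_\phi$ pointwise. The Lipschitz bound then passes to $P$ by integration along paths, and Proposition \ref{vol_ineq}(2) combined with $\Jac_G P\leq 1$ gives the volume non-increasing statement; assertion (1) of Proposition \ref{key_prop} is already in Lemma \ref{RM_verification}, and assertion (2) is the same Jacobian bound. I do not see a serious obstacle, since all the heavy work is encoded in Lemmas \ref{A_formula}, \ref{E_formula} and \ref{easy_ineq}; the only mildly delicate point is confirming that the infinite-dimensional $\Jac_G$ reduces to a finite-dimensional determinant on $\cV_\phi$, which is immediate from $E_\phi|_{\cV_\phi^{\perp}}=0$.
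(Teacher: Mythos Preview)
Your proposal is correct and follows essentially the same approach as the paper's proof: both reduce the Lipschitz bound to $\|U_\phi^T\|\leq\sqrt{dn}$ and $\|\widehat{A}_\phi^{-1}\|\leq 1$, and both obtain the Jacobian bound by chaining Lemma \ref{easy_ineq}(1), (3) after reducing $\Jac_G E_\phi$ to the finite-dimensional determinant on $\cV_\phi$. The only difference is that the paper spells out the step you call ``immediate'' by factoring $E_\phi=E_\phi|_{\cV_\phi}\circ P_{\cV_\phi}$ and explicitly verifying $\Jac_G P_{\cV_\phi}=1$ via an AM--GM argument on singular values; your justification (orthogonal projection is $1$-Lipschitz, hence volume non-increasing on equidimensional subspaces) is equivalent and adequate.
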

\begin{proof}

Since $\id+\frac{1}{n}\widehat Q_\phi\geq\id$, by Lemma \ref{A_formula} $\widehat{A}_\phi^{-1}$ is $1$-Lipschitz. It follows from Lemma \ref{E_formula} and the second assertion in Lemma \ref{easy_ineq} that $E_\phi$ is $(n+1)\sqrt{d/n}$-Lipschitz. Hence their composition $\widehat{A}_\phi^{-1}\circ E_\phi:T_\phi\cL\to T_xM$ is $(n+1)\sqrt{d/n}$-Lipschitz.

It remains to show that 
$$\Jac_G(\widehat{A}_\phi^{-1}\circ E_\phi)=\left|\det \widehat{A}_\phi^{-1}\right|J_GE_\phi\leq 1.$$
Since $E_\phi|_{\cV^{\perp}_\phi}=0$, we have $E_\phi=E_\phi|_{\cV_\phi}\circ P_{\cV_\phi}$, where $P_{\cV_\phi}:\cL\to\cV_\phi$ is the orthogonal projection onto $\cV_\phi$. Hence 
$$\Jac_G E_\phi=\Jac_{G,\cV_\phi}E_\phi\cdot \Jac_GP_{\cV_\phi}.$$
We claim that $\Jac_GP_{\cV_\phi}=1$. This is because on the one hand, $\Jac_{G,\cV_\phi}P_{\cV_\phi}=\det\id_{\cV_\phi}=1$. On the other hand, for any arbitary $dn$-dimensional subspace $\cW\subset\cL$ with an orthonormal basis $\{\widetilde X_1,\widetilde X_2,...,\widetilde X_{dn}\}$, we have the following
\begin{align}\label{proj_matrix_jacobian_est}
\Jac_{G,\cW}P_{\cV_\phi} =&\Jac_G(P_{\cV_\phi}|_\cW)
\leq\sqrt{\left(\frac{1}{dn}\sum_{i=1}^{dn}\|P_{\cV_\phi}(\widetilde X_i)\|_{G_\phi}^2\right)^{dn}}
\leq\sqrt{\left(\frac{1}{dn}\sum_{i=1}^{dn}\|\widetilde X_i\|_{G_\phi}^2\right)^{dn}} =\sqrt{1^{dn}}=1.
\end{align}
The second inequality sign follows from the fact that $P_{\cV_\phi}$ is $1$-Lipschitz and the first inequality sign follows from the following fact from linear algebra: For any $m\times m$ real matrix $H=(h_1~~h_2~~...~~h_m)$, assuming $k_1,k_2,...,k_{m}\geq 0$ are eigenvalues of $H^TH$, we have
\begin{align*}
|\det H|=&\sqrt{\det H^TH}
=\sqrt{\prod_{i=1}^{m}k_i}
\leq\sqrt{\left(\frac{1}{m}\sum_{i=1}^mk_i\right)^{m}} 
=\sqrt{\left(\frac{1}{m}\sum_{i=1}^{m}h_i^Th_i\right)^{m}}.
\end{align*}
Therefore 
$$\Jac_GE_\phi=\Jac_{G,\cV_\phi}E_\phi=\left|\det \frac{n+1}{n}U^T_\phi\right|=\left(\frac{n+1}{n}\right)^{dn}(\det{U_\phi^TU_\phi)}^{\frac{1}{2}}.$$
On the other hand 
$$|\det\widehat{A}_\phi^{-1}|=\left|\det ~_{\Xi}[\widehat{A}_\phi]_{\Xi}\right|^{-1}=\det\left(\id+\frac{1}{n}\widehat{Q}_\phi\right)^{-1}.$$
Hence by the first assertion in Lemma \ref{easy_ineq},
\begin{align}\label{AE_det_est}
\Jac_G(\widehat{A}_\phi^{-1}\circ E_\phi)=&\frac{\left(\frac{n+1}{n}\right)^{dn}(\det{U_\phi^TU_\phi)}^{\frac{1}{2}}}{\det\left(\id+\frac{1}{n}\widehat{Q}_\phi\right)} \nonumber\\
\leq&\frac{\left(\frac{n+1}{n}\right)^{dn}(\det{U_\phi^TU_\phi)}^{\frac{1}{2}}}{\left(\frac{n+1}{n}\right)^{dn}(\det{U_\phi^TU_\phi)}^{\frac{1}{n+1}}}=(\det{U_\phi^TU_\phi)}^{\frac{1}{2}-\frac{1}{n+1}}\leq 1^{\frac{1}{2}-\frac{1}{n+1}}=1.
\end{align}
In particular, when $g=g_0$, $A_\phi=\widehat{A}_\phi.$ Hence the above result implies that $\Jac_GP\leq 1$. 
\end{proof}

\section{Estimating the correction factor}\label{s6}
In the previous section, we introduced an operator $\widehat{A}_\phi$ for any $\phi\in P^{-1}(B_{x_0}(R_1))$ which has an explicit formula and coincides with $A_\phi$ when $g=g_0$. We will first estimate the difference of these two operators.

\begin{notation}
Denoted by $\epsilon=\epsilon(g,r)$ the $C^r$-norm of $g-g_0$.

Let $x=P(\phi)$. Define 
$$\phi_t(s)=\Phi_s(x)-\frac{1}{dn+d}\ln\left(1-t+te^{(dn+d)(\Phi_s(x)-\phi(s))}\right).$$
Then
$$e^{(dn+d)(\Phi_s(x)-\phi_t(s))}=1-t+t\rho_\phi(s)=1+t(\rho_\phi(s)-1).$$

\end{notation}
\begin{lem}\label{prelim_est}
For any $\phi\in\cB(R)$, we have the following facts
\begin{enumerate}
\item[(1).] $P(\phi_t)=P(\phi)=x$ for any $\phi_t$ in the domain of $P$;
\item[(2).] $A_{\phi_0}=\widehat{A}_{\phi_0}=\frac{n+1}{n}\id$;
\item[(3).] $\left.\frac{d}{dt}\right|_{t=0}\det A_{\phi_t}=\left.\frac{d}{dt}\right|_{t=0}\det \widehat{A}_{\phi_t}=0$.
\end{enumerate}
\end{lem}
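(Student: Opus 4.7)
The plan is to exploit the affine $t$-dependence $e^{(dn+d)(\Phi_s(x)-\phi_t(s))} = 1 + t(\rho_\phi(s) - 1)$ that is built into the definition of $\phi_t$, together with the identity $\tr A_{x,s} = (n+1)d$ coming from Liouville invariance.

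For \textbf{(1)}, the affine identity above makes $\Omega_{\phi_t,g}(x)$ affine in $t$:
\begin{equation*}
\Omega_{\phi_t,g}(x) = (1-t)\int_S d\Phi_s(x)\,d\mu_x(s) + t\,\Omega_{\phi,g}(x).
\end{equation*}
The second term vanishes because $\Omega_{\phi,g}(P(\phi)) = 0$; the first vanishes because $\int_S \grad\Phi_s(x)\,d\mu_x(s) = \int_{T^1_xM} v\,ds_x(v) = 0$ under the pushforward identification $d\mu_x = (\alpha_x)_*ds_x$. The uniqueness part of Lemma \ref{well_defined_P} then forces $P(\phi_t) = x$.

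For \textbf{(2)}, at $t=0$ we have $\overline\rho_{\phi_0}\equiv 1$. To see $\widehat A_{\phi_0} = \frac{n+1}{n}\id$, use \eqref{Axs_approx} or \eqref{Axs_approx_Cay}. When $\KK\neq\OO$, each $\cJ_{t,g}$ is a $g$-isometry, so the standard spherical averaging $\int_{T^1_xM}\langle\xi,v\rangle v\,ds_x(v) = \frac{1}{dn}\xi$ gives $\int_S\langle\xi,\cJ_{t,g}\grad\Phi_s\rangle\cJ_{t,g}\grad\Phi_s\,d\mu_x = \frac{1}{dn}\xi$, and summing over $t$ yields $\widehat A_{\phi_0}\xi = (1+\frac{d}{dn})\xi$. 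When $\KK=\OO$, $K_{x,g}$-equivariance of $v\mapsto\pi_{\Cay_g(v)}$ and irreducibility of the $16$-dimensional spin representation of $\Spin(9)$ force $\int_{T^1_xM}\pi_{\Cay_g(v)}\,ds_x(v)$ to be scalar, and its trace $8$ pins it down as $\frac{d}{dn}\id$. To see $A_{\phi_0} = \frac{n+1}{n}\id$, differentiate $P\circ\Phi = \id_M$ at $x$ and apply Lemma \ref{AE_decomp} to obtain $A_{\phi_0}^{-1}\circ E_{\phi_0}\circ d_x\Phi = \id_{T_xM}$. The identity $\langle E_\phi(X),v\rangle = \frac{n+1}{n}G_\phi(X,d_x\Phi(v))$ derived inside the proof of Lemma \ref{RM_verification}, evaluated at $\phi_0$ with $X = d_x\Phi(w)$ and using $G_{\phi_0}(d_x\Phi(w),d_x\Phi(v)) = \langle w,v\rangle$ (since $\Phi$ is a special embedding), gives $E_{\phi_0}\circ d_x\Phi = \frac{n+1}{n}\id$, forcing $A_{\phi_0} = \frac{n+1}{n}\id$.

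For \textbf{(3)}, write $h(s) = \rho_\phi(s)-1$ and $H = \int_S h\,d\mu_x$, so $\rho_{\phi_t} = 1+th$ and
\begin{equation*}
A_{\phi_t} = \frac{A_{\phi_0} + t\int_S h(s)A_{x,s}\,d\mu_x(s)}{1 + tH}.
\end{equation*}
Since $A_{\phi_0}$ is scalar, Jacobi's formula gives
\begin{equation*}
\frac{d}{dt}\bigg|_{t=0}\det A_{\phi_t} = \det(A_{\phi_0})\cdot\tfrac{n}{n+1}\Bigl(\int_S h(s)\tr A_{x,s}\,d\mu_x - (n+1)d\,H\Bigr),
\end{equation*}
so the claim reduces to $\tr A_{x,s}$ being the constant $(n+1)d$ in $s$. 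Unpacking \eqref{Axs},
\begin{equation*}
\tr A_{x,s} = e^{-(dn+d)\Phi_s}\lambda^{-1}\mathrm{div}\bigl(e^{(dn+d)\Phi_s}\lambda\grad\Phi_s\bigr) = (dn+d) + \grad\Phi_s\cdot\grad\ln\lambda + \Delta\Phi_s,
\end{equation*}
which reduces the problem to the identity $\grad\Phi_s\cdot\grad\ln\lambda + \Delta\Phi_s = 0$. This is Liouville invariance: parametrize $T^1M$ by $(x,s)\mapsto\grad\Phi_s(x)$; the Liouville measure becomes $\lambda(x,s)\,d\mathrm{vol}_M(x)\otimes ds$, and the geodesic flow acts as the time-$t$ flow of $\grad\Phi_s$ on $M$ for fixed $s$, so invariance reads $\lambda(\gamma(t),s)|\det d\phi^s_t(x)| = \lambda(x,s)$, whose logarithmic derivative at $t=0$ is exactly the identity. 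The same chain handles $\widehat A_{\phi_t}$, with the even simpler observation that $\tr\widehat A_{x,s} = dn+d$ reads off directly from \eqref{Axs_approx} and \eqref{Axs_approx_Cay} (each summand contributes $|\cJ_{t,g}\grad\Phi_s|^2 = 1$ when $\KK\neq\OO$; $\tr\pi_{\Cay_g(\grad\Phi_s)} = 8 = d$ when $\KK = \OO$).

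The main subtle input is the Liouville-invariance identity $\grad\Phi_s\cdot\grad\ln\lambda + \Delta\Phi_s = 0$; it holds for any metric with smooth Busemann functions (and in particular does not use the closeness of $g$ to $g_0$), and encodes the compatibility between the visual parametrization of $T^1M$ and the geodesic flow. Everything else is essentially linear algebra and routine spherical averaging.
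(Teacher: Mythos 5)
Your proof is correct and, for parts (2) and (3), takes a genuinely different route from the paper. Part (1) is the same computation. For part (2), the paper gets $\widehat{A}_{\phi_0}=\frac{n+1}{n}\id$ by observing $Q_{\phi_0}=U_{\phi_0}^TU_{\phi_0}=\id$ and invoking the matrix identities in Lemma \ref{A_formula} together with the eigenvalue constraints of Lemma \ref{easy_ineq}; you instead compute the spherical average directly (using that each $\cJ_{t,g}$ preserves $ds_x$, and Schur's lemma for the irreducible $16$-dimensional $\Spin(9)$-representation when $\KK=\OO$), which bypasses the $Q_\phi,U_\phi$ machinery entirely. For the $A_{\phi_0}$ half you and the paper share the key step $A_{\phi_0}^{-1}\circ E_{\phi_0}\circ d_x\Phi = \id$ followed by identifying $E_{\phi_0}\circ d_x\Phi=\frac{n+1}{n}\id$. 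The real divergence is in part (3). The paper argues variationally: $\Jac_G E_{\phi_t}$ attains its maximum at $t=0$ (Lemma \ref{easy_ineq}(3)), $\Jac_G P(\phi_t)$ has a critical point at $t=0$ (orthogonality of $\left.\frac{d}{dt}\right|_{t=0}\phi_t$ to $T_x\Phi$ plus Proposition \ref{crit_jacobian_cond}), and then $\det A_{\phi_t}=\Jac_G E_{\phi_t}/\Jac_G P(\phi_t)$ inherits a critical point. You instead apply Jacobi's formula to the affine-in-$t$ family and reduce everything to the pointwise trace identity $\tr A_{x,s}\equiv dn+d$, derived from the Liouville-invariance relation $\langle\grad\Phi_s,\grad\ln\lambda\rangle+\Delta\Phi_s=0$. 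That identity is correct (it is insensitive to the choice of reference measure $ds$ on $S$, since changing $ds$ multiplies $\lambda$ by an $x$-independent factor, and it checks out against Lemma \ref{delta_X_density} when $g=g_0$: $\grad\ln\lambda=-(dn+d-2)\grad\Phi_s$ and $\Delta\Phi_s=dn+d-2$). Your version isolates a clean structural mechanism that the paper's variational argument leaves implicit; the paper's version buys reuse of Proposition \ref{crit_jacobian_cond}, which it needs anyway, and avoids having to prove the trace identity.
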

\begin{proof}
\begin{enumerate}
\item[(1).] Let $x=P(\phi)$. Notice that 
\begin{align*}
\Omega_{\phi_t,g}(x)=&\int_Se^{(dn+d)(\Phi_s(x)-\phi_t(s))}d \Phi_s(x)d\mu_x(s) \\
=&\int_S(1-t+t\rho_{\phi}(s))d \Phi_s(x)d\mu_x(s)  
=\int_St\rho_{\phi}(s)d \Phi_s(x)d\mu_x(s)=t\Omega_{\phi,g}(x)=0.
\end{align*}
Hence $P(\phi_t)=P(\phi)=x$ by Definition \ref{proj}. In particular,
$$\rho_{\phi_t}(s)=1-t+t\rho_\phi(s)=1+t(\rho_\phi(s)-1).$$
\item[(2).] Since $\phi_0=\Phi_s(x)$, we have $\rho_{\phi_0}\equiv 1$, which implies that $U_{\phi_0}^TU_{\phi_0}=\id$ given the formula \eqref{big_fat_matrix}. Therefore Lemma \ref{A_formula} and Lemma \ref{easy_ineq} imply that $\widehat A_{\phi_0}=\frac{n+1}{n}\id$. Since Lemma \ref{E_formula} suggests that $E_{\phi_0}|_{\cV_{\phi_0}}$ is surjective and $d_x\Phi(\Xi)= Z_{\phi_0}$ by their definition in the previous section (equal in an order preserving way),  it suffices to show that 
$$\id=\frac{n}{n+1}E_{\phi_0}\circ d_x\Phi,$$
given that $\id=d_{\phi_0} P\circ d_x\Phi=A_{\phi_0}^{-1}\circ E_{\phi_0}\circ d_x\Phi$ from Definition \ref{Projection}, Lemma \ref{RM_verification} and Lemma \ref{AE_decomp}.

By Lemma \ref{E_formula} we have 
$$_{\Xi}[E_{\phi_0}]_{Z_{\phi_0}}=\frac{n+1}{n}U_{\phi_0}^TU_{\phi_0}=\frac{n+1}{n}\id.$$ 
Therefore $\frac{n}{n+1}E_{\phi_0}(Z_{\phi_0})=\Xi$ (equal in an order-preserving way) and 
$$\frac{n}{n+1}E_{\phi_0}\circ d_x\Phi(\Xi)=\frac{n}{n+1}E_{\phi_0}(Z_{\phi_0})=\Xi$$
(equal in an order preserving way), which implies that 
$$\id=\frac{n}{n+1}E_{\phi_0}\circ d_x\Phi.$$
\item[(3).] Let $\psi$ be an arbitary element in $\cB(R)$. We assume that $\widehat{Q}_\psi$ has eigenvalues $0\leq\eta_1\leq\eta_2\leq...\leq\eta_{dn}$. Lemma \ref{easy_ineq} shows that 
$$\sum_{s=1}^{dn}\eta_s=dn.$$
Hence 
\begin{align*}
\det\widehat{A}_\psi =&\det \left(\id+\frac{1}{n}\widehat{Q}_\psi\right)=\prod_{t=1}^{dn}\left(1+\frac{1}{n}\eta_t\right)
\leq\left[\frac{1}{dn}\sum_{t=1}^{dn}\left(1+\frac{1}{n}\eta_t\right)\right]^{dn}=\left(\frac{n+1}{n}\right)^{dn}.
\end{align*}
Lemma \ref{easy_ineq} implies that the equality holds when $U_\psi^T U_\psi=\id$. In particular 
$$\det \widehat{A}_{\phi_0}=\left(\frac{n+1}{n}\right)^{dn}.$$
Therefore 
$$\left.\frac{d}{dt}\right|_{t=0}\det \widehat{A}_{\phi_t}=0.$$
Notice that $d_{\phi_t}P=A_{\phi_t}^{-1}\circ E_{\phi_t}$. We have
$$\det A_{\phi_t}=\frac{\Jac_GE_{\phi_t}}{\Jac_GP(\phi_t)}.$$
Lemma \ref{E_formula} and the third assertion in Lemma \ref{easy_ineq} imply that $\Jac_GE_{\phi_t}$ attains maximum at $t=0$. On the other hand, for any $\xi\in T_x M$, 
\begin{align*}
\left\langle\left.\frac{d}{dt}\right|_{t=0}\phi_t(s),\langle\grad\Phi_s(x),\xi\rangle\right\rangle_{G_{\Phi(x)}}=&\frac{-dn}{dn+d}\int_S(\rho_\phi(s)-1)\langle\grad\Phi_s(x),\xi\rangle d\mu_x(s) \\
=&\frac{-dn}{dn+d}\left[\Omega_{\phi,g}(x)(\xi)-\Omega_{\Phi_s(x),g}(x)(\xi)\right] \\
=&\frac{-dn}{dn+d}\left[\Omega_{\phi,g}(P(\phi))(\xi)-\Omega_{\Phi_s(x),g}(P(\Phi_s(x)))(\xi)\right]=0,
\end{align*}
where the last three equal signs follow from Definition \ref{proj} and the first assertion in Lemma \ref{well_defined_P}. Therefore $\left.\frac{d}{dt}\right|_{t=0}\phi_t(s)\in T_{\Phi_s(x)}\cL$ is perpendicular to $T_x\Phi=\Phi_*(T_xM)$. By Proposition \ref{crit_jacobian_cond} and Lemma \ref{RM_verification}, $t=0$ is a critical point for $\Jac_GP(\phi_t)$. Hence
\begin{align*}\left.\frac{d}{dt}\right|_{t=0}\det A_{\phi_t}=0.\tag*{\qedhere}\end{align*}
\end{enumerate}
\end{proof}

\begin{prop}\label{A_err}
There exist positive constants $r$ and $C_0=C_0(n,R)$ such that for any $g$ sufficiently close to $g_0$ (without loss of generality, assume $\epsilon<1$) and every $\phi\in\cB(R)$ one has
$$\|A_\phi-\widehat{A}_\phi \| \leq C_0\epsilon\|\phi-\Phi(P(\phi))\|_{L^2(S)}$$
and
$$|\det A_\phi-\det\widehat{A}_\phi | \leq C_0\epsilon\|\phi-\Phi(P(\phi))\|_{L^2(S)}^2.$$
\end{prop}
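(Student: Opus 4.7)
The plan is to exploit two consequences of Lemma~\ref{prelim_est} that hold \emph{for every} $g$ close to $g_0$: first, $A_{\phi_0}=\widehat{A}_{\phi_0}=\frac{n+1}{n}\id$ at the base point $\phi_0=\Phi(P(\phi))$; second, along the path $\phi_t$, both $\det A_{\phi_t}$ and $\det\widehat{A}_{\phi_t}$ have a critical point at $t=0$. The underlying pointwise estimate $\|A_{x,s}-\widehat{A}_{x,s}\|\leq C\epsilon$ (uniform in $x\in B_{x_0}(R_1)$ and $s\in S$) will be obtained from $A_{x,s}=\widehat{A}_{x,s}$ when $g=g_0$ (Lemma~\ref{hess}(2), Lemma~\ref{hess_Cay}(2)), combined with smooth dependence on $g$ of the Busemann functions, the density $\lambda$, and the structures $\cJ_{t,g}$ (Lemma~\ref{err_ACS}) or the Cayley lines.

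For the first inequality, specializing the identity $A_{\phi_0}=\widehat{A}_{\phi_0}$ to $\phi=\phi_0$ (so $\overline{\rho}_{\phi_0}\equiv 1$) yields the crucial cancellation $\int_S(A_{x,s}-\widehat{A}_{x,s})\,d\mu_x(s)=0$, whence
\[
A_\phi-\widehat{A}_\phi=\int_S(\overline{\rho}_\phi(s)-1)(A_{x,s}-\widehat{A}_{x,s})\,d\mu_x(s).
\]
Since $\rho_\phi(s)-1=e^{(dn+d)(\Phi_s(x)-\phi(s))}-1$ and $|\Phi_s(x)-\phi(s)|$ is uniformly bounded on $\cB(R)$, one has $|\overline{\rho}_\phi-1|\leq C|\phi-\Phi(x)|$ pointwise after absorbing the normalization denominator. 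Combined with $\|A_{x,s}-\widehat{A}_{x,s}\|\leq C\epsilon$ and Cauchy--Schwarz (using the uniform equivalence of $d\mu_x$ with $ds$), this gives the desired bound.

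For the determinant inequality, set $\psi(t)=\det A_{\phi_t}-\det\widehat{A}_{\phi_t}$. Lemma~\ref{prelim_est}(2)(3) give $\psi(0)=\psi'(0)=0$, so Taylor's formula with integral remainder yields $\det A_\phi-\det\widehat{A}_\phi=\psi(1)=\int_0^1(1-t)\psi''(t)\,dt$. Since $P(\phi_t)=x$ is constant along the path, the only $t$-dependence in $A_{\phi_t}$ and $\widehat{A}_{\phi_t}$ enters through $\overline{\rho}_{\phi_t}(s)=\frac{1+tf(s)}{1+tm}$, where $f=\rho_\phi-1$ and $m=\int_S f\,d\mu_x$. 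Direct computation gives $\partial_t\overline{\rho}_{\phi_t}=(f-m)/(1+tm)^2$ and $\partial_t^2\overline{\rho}_{\phi_t}=-2m(f-m)/(1+tm)^3$, so that $\|\partial_tA_{\phi_t}\|\leq C\|\phi-\Phi(x)\|_{L^2}$ and $\|\partial_t^2A_{\phi_t}\|\leq C|m|\cdot\|f-m\|_{L^1}\leq C'\|\phi-\Phi(x)\|_{L^2}^2$ (and likewise for the hatted quantities). By Jacobi's formula, $\psi''(t)$ is a polynomial expression in $A_{\phi_t}^{\pm 1},\partial_tA_{\phi_t},\partial_t^2A_{\phi_t}$ and their hatted counterparts, each of whose terms carries either one $\partial_t^2A$ factor or two $\partial_tA$ factors; therefore $|\psi''(t)|\leq C(g)\|\phi-\Phi(x)\|_{L^2}^2$ with $C(g)$ depending smoothly on $g$. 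Since $A_{x,s}=\widehat{A}_{x,s}$ when $g=g_0$ forces $\psi\equiv 0$ there, one has $C(g_0)=0$, and a first-order Taylor expansion in $g$ yields $C(g)\leq C_0\epsilon$, completing the proof.

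The main obstacle lies in this last step: extracting from $\psi''(t)$ the \emph{simultaneous} factorization as (a quantity vanishing at $g=g_0$)$\times$(a quantity quadratic in $\|\phi-\Phi(x)\|_{L^2}$). Neither property alone gives the target bound, so the verification requires careful uniform control, in $t\in[0,1]$, in $\phi\in\cB(R)$, and in $g$ near $g_0$, of the various operator norms appearing in $\psi''(t)$. This is facilitated by the explicit rational-in-$t$ form of $\overline{\rho}_{\phi_t}$, which makes all $t$-derivatives manifestly polynomial in $f$ and $m$ and hence bookkeepable.
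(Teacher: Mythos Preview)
Your argument for the norm bound is essentially the paper's: the cancellation $\int_S(A_{x,s}-\widehat A_{x,s})\,d\mu_x=0$ coming from $A_{\phi_0}=\widehat A_{\phi_0}$, then Cauchy--Schwarz against $\rho_\phi-1$. Good.

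For the determinant bound your route genuinely differs from the paper's, and the last step as you wrote it does not quite close. Having established $|\psi''(t)|\le C(g)\|\phi-\Phi(x)\|_{L^2}^2$, you then argue ``$C(g_0)=0$, so Taylor in $g$ gives $C(g)\le C_0\epsilon$''. But $C(g)$ is a supremum over $(\phi,t)$ of a ratio, and there is no reason for this supremum to be differentiable (or even Lipschitz) in $g$; moreover the denominator $\|\phi-\Phi_g(P_g(\phi))\|_{L^2}^2$ itself moves with $g$. The right way to extract $\epsilon$ is not ``Taylor in $g$'' on the constant, but rather to telescope $\psi''(t)$ directly. Writing $B_t=A_{\phi_t}$, $\widehat B_t=\widehat A_{\phi_t}$ and $G(B,B',B'')=\det B\,[(\tr B^{-1}B')^2-\tr(B^{-1}B')^2+\tr B^{-1}B'']$, one has $\psi''(t)=G(B_t,B_t',B_t'')-G(\widehat B_t,\widehat B_t',\widehat B_t'')$. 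Your own computation of $\partial_t^k\overline\rho_{\phi_t}$ gives, for $k=0,1,2$, the difference bounds $\|B_t^{(k)}-\widehat B_t^{(k)}\|\le C\epsilon\,\|\phi-\Phi(x)\|_{L^2}^{\max(1,k)}$ (each $B_t^{(k)}-\widehat B_t^{(k)}$ is an integral of $\partial_t^k\overline\rho_{\phi_t}\cdot(A_{x,s}-\widehat A_{x,s})$). Meanwhile the partial derivatives of $G$ in its three slots are bounded, at points on the segment between $(B_t,\ldots)$ and $(\widehat B_t,\ldots)$, by $C\|\phi-\Phi(x)\|_{L^2}^2$, $C\|\phi-\Phi(x)\|_{L^2}$ and $C$ respectively (each term of $G$ carries either a $B''$ or two $B'$'s). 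The mean value inequality then gives $|\psi''(t)|\le C\epsilon\|\phi-\Phi(x)\|_{L^2}^2$ directly, with a constant uniform in $t$ and $\phi$. This repairs your argument.

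For comparison, the paper does not integrate along $\phi_t$ at all. It works at $t=1$: setting $H_1=\widehat A_\phi-\tfrac{n+1}{n}\id$ and $H_2=A_\phi-\widehat A_\phi$, it first shows $\tr H_2=0$ (this is where Lemma~\ref{prelim_est}(3) enters: since $A_{\phi_0}$ is scalar, $\tfrac{d}{dt}\big|_{0}\det A_{\phi_t}=0$ forces $\tr\tfrac{d}{dt}\big|_{0}A_{\phi_t}=0$, while $\tr\widehat A_{\phi_t}$ is identically $\tfrac{dn(n+1)}{n}$; subtracting gives $\tr H_2=0$). With both $H_1,H_2$ trace-free, an elementary matrix lemma yields $|\det(\id+H_1+H_2)-\det(\id+H_1)|\le C\|H_2\|(\|H_1\|+\|H_2\|)$, and the norm bounds $\|H_1\|\le C\|\phi-\Phi(x)\|_{L^2}$, $\|H_2\|\le C\epsilon\|\phi-\Phi(x)\|_{L^2}$ finish the job. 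This is shorter and avoids second $t$-derivatives entirely; your Taylor-remainder route is more analytic in flavor but, once fixed as above, equally valid.
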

\begin{proof}
Let $$b(t)=\int_S\rho_{\phi_t}(s)d\mu_x(s).$$
Notice that
\begin{align}\label{A_err_formula}
A_{\phi_t}-\widehat{A}_{\phi_t}
=&\frac{1}{b(t)}\int_S\rho_{\phi_t}(s)(A_{x,s}-\widehat{A}_{x,s} )d\mu_x(s) \nonumber\\
=&\frac{1}{b(t)}\int_S(A_{x,s}-\widehat{A}_{x,s} )d\mu_x(s)+\frac{t}{b(t)}\int_S(\rho_\phi(s)-1)(A_{x,s}-\widehat{A}_{x,s} )d\mu_x(s) \nonumber\\
=&\frac{t}{b(t)}\int_S(\rho_\phi(s)-1)(A_{x,s}-\widehat{A}_{x,s} )d\mu_x(s).
\end{align}
The last equality follows from the first two assertions in Lemma \ref{prelim_est} and the fact that
\begin{align*}
\frac{1}{b(t)}\int_S(A_{x,s}-\widehat{A}_{x,s} )d\mu_x(s) 
=&\frac{1}{b(t)}\int_S\overline\rho_{\phi_0}(A_{x,s}-\widehat{A}_{x,s} )d\mu_x(s) 
=\frac{1}{b(t)}(A_{\phi_0}-\widehat{A}_{\phi_0})=0.
\end{align*}
Since $\phi\in\cB(R)$, Lemma \ref{well_defined_P} implies  $\Phi(x)=\Phi(P(\phi))\in\cB(R_1)$. Hence
\begin{align}\label{rho_phi_est} 
c_0^{-1}\leq \rho_\phi(s)=e^{(dn+d)(\Phi_s(x)-\phi(s))}\leq c_0
\end{align}
and
$$\|\rho_\phi-1\|_{L^2(S)}=\left\|e^{(dn+d)(\Phi_s(x)-\phi(s))}-1\right\|_{L^2(S)}\leq c_0\|\Phi_s(x)-\phi(s)\|_{L^2(S)},$$
where 
$$c_0=c_0(n,R)=e^{(dn+d)(R+R_1)}(dn+d).$$
Since $x\in B_{x_0}(R_1)$, $A_{x,s}$ and $\widehat{A}_{x,s}$ depend smoothly on $g$ and they coincide when $g=g_0$, we have
$$\|A_{x,s}-\widehat{A}_{x,s}\|_{L^2(S)}\leq c_1\epsilon$$
for some $c_1=c_1(n,R)$. Therefore by the Cauchy-Schwarz inequality, \eqref{A_err_formula} implies
\begin{align*}
\|A_{\phi_t}-\widehat{A}_{\phi_t}\|=&\frac{t}{b(t)}\left\|\int_S(\rho_\phi(s)-1)(A_{x,s}-\widehat{A}_{x,s} )d\mu_x(s)\right\| \\
\leq&\frac{t}{b(t)}\|\rho_\phi-1\|_{L^2(S,d\mu_x)}\cdot\|A_{x,s}-\widehat{A}_{x,s}\|_{L^2(S,d\mu_x)}.
\end{align*}
Choose $R_0=1+R_1$ in \eqref{density_est} and Lemma \ref{well_defined_P} implies that
\begin{align}\label{density_abs_est}
\frac{1}{2}e^{-(dn+d-2) R_1}\leq|\lambda(x,s)|\leq 2e^{(dn+d-2) R_1}
\end{align}
for any $\phi\in\cB(R)$ and $x=P(\phi)\in B_{x_0}(R_1)$.
Hence by \eqref{rho_phi_est} and \eqref{density_abs_est} we have
\begin{align*}
\frac{t}{b(t)}\|\rho_\phi-1\|_{L^2(S,d\mu_x)}\|A_{x,s}-\widehat{A}_{x,s}\|_{L^2(S,d\mu_x)}
\leq~&2e^{(dn+d-2) R_1}\frac{t}{b(t)}\|(\rho_\phi-1)\|_{L^2(S)}\|A_{x,s}-\widehat{A}_{x,s}\|_{L^2(S)} \\
\leq~&2te^{(dn+d-2) R_1}c_0\|(\rho_\phi-1)\|_{L^2(S)}\|A_{x,s}-\widehat{A}_{x,s}\|_{L^2(S)}  \\
\leq~&2te^{(dn+d-2) R_1}c_0^2c_1\epsilon\|\Phi_s(x)-\phi(s)\|_{L^2(S)}.
\end{align*}
Summarizing up, we have 
$$\|A_{\phi_t}-\widehat{A}_{\phi_t}\|\leq 2te^{(dn+d-2) R_1}c_0^2c_1\epsilon\|\Phi_s(x)-\phi(s)\|_{L^2(S)}.$$
In particular, when $t=1$, we have
\begin{align}\label{A_norm_err}
\|A_{\phi}-\widehat{A}_{\phi}\|\leq 2e^{(dn+d-2) R_1}c_0^2c_1\epsilon\|\Phi_s(x)-\phi(s)\|_{L^2(S)}
\end{align}
In order to estimate the difference in determinants, we first introduce a linear algebra lemma.

\begin{lem}\label{matrix_lem}
Let $H_1,H_2$ be $n\times n$ trace-free matrices such that $\|H_1\|,\|H_2\|\leq R$. Then we have
$$\left|\det(\id+H_1+H_2)-\det(\id+H_1)\right|\leq \cK(n,R)\|H_2\|(\|H_1\|+\|H_2\|),$$
for some $\cK(n,R)>0$.
\end{lem}
\begin{proof}[Proof of Lemma \ref{matrix_lem}]
Let $H_1=(a_{ij})_{1\leq i,j\leq n}$ and $H_2=(b_{ij})_{1\leq i,j\leq n}$. Then we have 
$$\left|a_{ij}\right|\leq\|H_1\|\leq R ~~\mathrm{and}~~\left|b_{ij}\right|\leq\|H_2\|\leq R,\quad 1\leq i,j\leq n.$$
Direct computation shows that $\left|\det(\id+H_1+H_2)-\det(\id+H_1)\right|$ can be written as the sum of at most $3^n$ monomials in $a_{ij}$ and $b_{kl}$ with coefficients $\pm 1$. These monomials have degree at least 2 due to trace-free assumptions and always have positive degrees with respect to some $b_{kl}$. More precisely, 
\begin{align*}
&\left|\det(\id+H_1+H_2)-\det(\id+H_1)\right| \\
=&\left|\sum_{\sigma\in S_{n}}(-1)^{\mathrm{sgn}(\sigma)}\left[\prod_{i=1}^{n}\left(\delta_{i\sigma(i)}+a_{i\sigma(i)}+b_{i\sigma(i)}\right)-\prod_{i=1}^{n}\left(\delta_{i\sigma(i)}+a_{i\sigma(i)}\right)\right]\right| \\
=&\left|\sum_{\sigma\in S_{n}, \mathrm{Fix}(\sigma)^\mathrm{c}\subset\Omega\subset S_{n},|\Omega|\geq2 \mathrm{~if~} \sigma=\id}(-1)^{\mathrm{sgn}(\sigma)}\left(\prod_{i\in\Omega}\left(a_{i\sigma(i)}+b_{i\sigma(i)}\right)-\prod_{i\in\Omega}a_{i\sigma(i)}\right)\right| \\
\leq&\sum_{\sigma\in S_{n}, \mathrm{Fix}(\sigma)^\mathrm{c}\subset\Omega\subset S_{n},|\Omega|\geq2 \mathrm{~if~} \sigma=\id}\left|\prod_{i\in\Omega}\left(a_{i\sigma(i)}+b_{i\sigma(i)}\right)-\prod_{i\in\Omega}a_{i\sigma(i)}\right| \\
\leq&\sum_{\sigma\in S_{n}, \mathrm{Fix}(\sigma)^\mathrm{c}\subset\Omega\subset S_{n},|\Omega|\geq2 \mathrm{~if~} \sigma=\id}(2^{|\Omega|}-1)\|H_2\|(\|H_1\|+\|H_2\|)^{|\Omega|-1} \\
\leq&\sum_{\sigma\in S_{n}, \mathrm{Fix}(\sigma)^\mathrm{c}\subset\Omega\subset S_{n}}(2^{n}-1)(2R+1)^{n-2}\|H_2\|(\|H_1\|+\|H_2\|) \\
\leq&|S_{n}|2^{n}(2^{n}-1)(2R+1)^{n-2}\|H_2\|(\|H_1\|+\|H_2\|),
\end{align*}
where $S_{n}$ is the group of symmetry over $\{1,2,3,...,n\}$. Choose $\cK(n,R)=|S_{n}|2^{n}(2^{n}-1)(2R+1)^{n-2}$ and the lemma follows.
\end{proof}

Back to the proof of Proposition \ref{A_err}, notice that Lemma \ref{easy_ineq} implies that 
$$\tr \left(\widehat{A}_\psi-\frac{n+1}{n}\id\right)=0,$$
for any $\psi\in\cB(R)$.

Lemma \ref{prelim_est} and the above fact suggest that
\begin{align*}
0=&\tr \left.\frac{d}{dt}\right|_{t=0}\left(A_{\phi_t}-\frac{n+1}{n}\id\right)  \\
=&\tr \left.\frac{d}{dt}\right|_{t=0}\left(\widehat{A}_{\phi_t}-\frac{n+1}{n}\id\right) +\tr \frac{1}{b(0)}\int_S(\rho_\phi(s)-1)(A_{x,s}-\widehat{A}_{x,s})d\mu_x(s) \\
=&\tr \frac{1}{b(0)}\int_S(\rho_\phi(s)-1)(A_{x,s}-\widehat{A}_{x,s})d\mu_x(s).
\end{align*}
Let
$$H_1=\widehat{A}_\phi-\frac{n+1}{n}\id$$
and
$$H_2=A_\phi-\widehat{A}_\phi=A_\phi-\widehat{A}_\phi-\frac{1}{b(1)}(A_{\phi_0}-\widehat{A}_{\phi_0})=\frac{1}{b(1)}\int_S(\rho_\phi(s)-1)(A_{x,s}-\widehat{A}_{x,s})d\mu_x(s).$$
In order to estimate $\|H_1\|$, we first recall its matrix under the orthonormal basis $\Xi\subset T_xM$.
$$_{\Xi}[H_1]_{\Xi}=\frac{1}{n}\left(\widehat{Q}_\phi-\id\right).$$
The second assertion of Lemma \ref{easy_ineq} implies that 
$$\|H_1\|\leq\left\|\frac{1}{n}(U_\phi^TU_\phi-\id)\right\|$$
Hence it suffices to consider components in $\frac{1}{dn}(U_\phi^TU_\phi-\id)$. Notice that
\begin{align*}
\frac{1}{dn}\left(\langle X_{l_1,t_1,g}, X_{l_2,t_2,g}\rangle_{G_\phi}-\delta_{l_1l_2}\delta_{t_1t_2}\right)
=&~\int_S(\overline\rho_\phi(s)-1) X_{l_1,t_1,g}(s) X_{l_2,t_2,g}(s)d\mu_x(s) \\
=&~\frac{1}{\int_S\rho_\phi(s)d\mu_x(s)}\int_S(\rho_\phi(s)-1) X_{l_1,t_1,g}(s) X_{l_2,t_2,g}(s)d\mu_x(s)  \\
&-\frac{\int_S(\rho_\phi(s)-1) d\mu_x(s)}{\int_S\rho_\phi(s)d\mu_x(s)}\int_S X_{l_1,t_1,g}(s) X_{l_2,t_2,g}(s)d\mu_x(s). 
\end{align*}
Therefore \eqref{rho_phi_est} and \eqref{density_abs_est} imply that
\begin{align*}
&~\left|\frac{1}{dn}\left(\langle X_{l_1,t_1,g}, X_{l_2,t_2,g}\rangle_{G_\phi}-\delta_{l_1l_2}\delta_{t_1t_2}\right)\right| \\
\leq&~\left|\frac{1}{\int_S\rho_\phi(s)d\mu_x(s)}\int_S(\rho_\phi(s)-1) X_{l_1,t_1,g}(s) X_{l_2,t_2,g}(s)d\mu_x(s)\right|  \\
&+\left|\frac{\int_S(\rho_\phi(s)-1) d\mu_x(s)}{\int_S\rho_\phi(s)d\mu_x(s)}\int_S X_{l_1,t_1,g}(s) X_{l_2,t_2,g}(s)d\mu_x(s)\right| \\
\leq&~c_0\left|\int_S(\rho_\phi(s)-1) X_{l_1,t_1,g}(s) X_{l_2,t_2,g}(s)d\mu_x(s)\right|  \\
&+c_0\left|\left(\int_S(\rho_\phi(s)-1) d\mu_x(s)\right)\int_S X_{l_1,t_1,g}(s) X_{l_2,t_2,g}(s)d\mu_x(s)\right| \\
\leq&~2c_0\|(\rho_\phi-1)\|_{L^1(S,d\mu_x)} 
\leq~4c_0e^{(dn+d-2) R_1}\|(\rho_\phi-1)\|_{L^1(S)} \leq~4c_0e^{(dn+d-2) R_1}\|(\rho_\phi-1)\|_{L^2(S)}
\end{align*}
for any $1\leq l_1,l_2\leq n$ and $0\leq t_1,t_2\leq d-1$. Hence by \eqref{rho_phi_est} we have
\begin{align*}
\|H_1\|\leq&~\left\|\frac{1}{n}(U_\phi^TU_\phi-\id)\right\| \\
\leq&~ dn\sup_{1\leq l_1,l_2\leq n, 0\leq t_1,t_2\leq d-1}\left|\frac{1}{n}\left(\langle X_{l_1,t_1,g}, X_{l_2,t_2,g}\rangle_{G_\phi}-\delta_{l_1l_2}\delta_{t_1t_2}\right)\right| \\
\leq&~4d^2nc_0e^{(dn+d-2) R_1}\|(\rho_\phi-1)\|_{L^2(S)} 
\leq~4d^2nc_0^2e^{(dn+d-2) R_1}\|\Phi_s(x)-\phi(s)\|_{L^2(S)}
\end{align*}
Recall that \eqref{A_norm_err} implies 
$$\|H_2\|\leq 2e^{(dn+d-2) R_1}c_0^2c_1\epsilon\|\Phi_s(x)-\phi(s)\|_{L^2(S)}.$$
Choose $R_3=R_3(n,R)=(R+R_1)R_2$ and
$$R_2=R_2(n,R)=\left(2e^{(dn+d-2) R_1}c_0^2c_1+4d^2nc_0^2e^{(dn+d-2) R_1}\right).$$
Hence 
$\|H_1\|+\|H_2\|\leq \min\{R_3,R_2\|\Phi_s(x)-\phi(s)\|_{L^2(S)}\}$ 
by the assumption of $\phi\in\cB(R)$, $\epsilon<1$ and Lemma \ref{well_defined_P}. It follows from Lemma \ref{matrix_lem} that
$$\|\det A_\phi-\det\widehat{A}_\phi\|\leq \cK_1(n,R_3)\|H_2\|(\|H_1\|+\|H_2\|).$$
Hence
\begin{align*}
&~\frac{\|\det A_\phi-\det\widehat{A}_\phi\|}{\epsilon\|\Phi_s(x)-\phi(s)\|^2_{L^2(S)}}\leq~ \cK_1(n,R_3)(R_2+1)^2.
\end{align*}
Based on the above inequality and \eqref{A_norm_err}, we can choose $C_0(n,R)$ satisfying
$$C_0(n,R)>\max\{\cK_1(n,R_3)(R_2+1)^2,2e^{(dn+d-2) R_1}c_0^2c_1\}.$$
Then we have 
$$\|A_\phi-\widehat{A}_\phi \| \leq C_0\epsilon\|\phi-\Phi(P(\phi))\|_{L^2(S)}$$
and
\begin{align*}
|\det A_\phi-\det\widehat{A}_\phi | \leq C_0\epsilon\|\phi-\Phi(P(\phi))\|_{L^2(S)}^2. \tag*{\qedhere}
\end{align*}
\end{proof}
\begin{cor}\label{A_invert}
Let $\phi\in\cB(R)$. If $g$ is $C^r$ sufficiently close to $g_0$, then $A_\phi$ is invertible. Moreover, we have 
$$\left\|A_\phi^{-1}-\widehat{A}_\phi^{-1} \right\| \leq \widetilde C_0\epsilon\|\phi-\Phi(P(\phi))\|_{L^2(S)}$$
and
$$\left|\det A^{-1}_\phi-\det\widehat{A}^{-1}_\phi \right| \leq \widetilde C_0\epsilon\|\phi-\Phi(P(\phi))\|_{L^2(S)}^2,$$
for some constant $\widetilde C_0=\widetilde C_0(n,R)>0$.
\end{cor}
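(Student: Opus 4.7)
The plan is to bootstrap Corollary~\ref{A_invert} directly from Proposition~\ref{A_err} together with the elementary bound $\widehat{A}_\phi \geq \id$ (which follows from $\widehat{Q}_\phi$ being positive semi-definite by Lemma~\ref{A_formula} and Lemma~\ref{easy_ineq}). This gives both $\|\widehat{A}_\phi^{-1}\| \leq 1$ and $\det \widehat{A}_\phi \geq 1$ uniformly in $\phi \in \cB(R)$, so no separate work on $\widehat{A}_\phi^{-1}$ is needed.

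First I would establish invertibility of $A_\phi$ and a uniform bound on $\|A_\phi^{-1}\|$. For $\phi\in\cB(R)$, Lemma~\ref{well_defined_P} gives $\Phi(P(\phi))\in\cB(R_1)$, so $\|\phi-\Phi(P(\phi))\|_{L^\infty} \leq R+R_1$, and hence $\|\phi-\Phi(P(\phi))\|_{L^2(S)} \leq R+R_1$ since $ds$ is a probability measure. Choosing $g$ sufficiently close to $g_0$ so that $C_0\epsilon(R+R_1) \leq 1/2$, Proposition~\ref{A_err} yields $\|A_\phi - \widehat{A}_\phi\| \leq 1/2$. Writing
$$A_\phi = \widehat{A}_\phi\bigl(\id + \widehat{A}_\phi^{-1}(A_\phi - \widehat{A}_\phi)\bigr),$$
the perturbation term has norm at most $\|\widehat{A}_\phi^{-1}\|\cdot 1/2 \leq 1/2$, so a Neumann series argument shows $A_\phi$ is invertible with $\|A_\phi^{-1}\| \leq 2$.

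For the first inequality, I use the standard identity
$$A_\phi^{-1} - \widehat{A}_\phi^{-1} = A_\phi^{-1}(\widehat{A}_\phi - A_\phi)\widehat{A}_\phi^{-1},$$
which gives
$$\|A_\phi^{-1} - \widehat{A}_\phi^{-1}\| \leq \|A_\phi^{-1}\|\cdot\|A_\phi - \widehat{A}_\phi\|\cdot\|\widehat{A}_\phi^{-1}\| \leq 2\cdot C_0\epsilon\|\phi-\Phi(P(\phi))\|_{L^2(S)}\cdot 1.$$

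For the determinant inequality, write
$$\det A_\phi^{-1} - \det \widehat{A}_\phi^{-1} = \frac{\det\widehat{A}_\phi - \det A_\phi}{\det A_\phi \cdot \det \widehat{A}_\phi}.$$
Since $\det\widehat{A}_\phi \geq 1$ and, by Proposition~\ref{A_err} together with the uniform $L^2$ bound on $\phi-\Phi(P(\phi))$, shrinking $\epsilon$ further makes $|\det A_\phi - \det\widehat{A}_\phi| \leq 1/2$ so that $\det A_\phi \geq 1/2$. Consequently,
$$\left|\det A_\phi^{-1} - \det\widehat{A}_\phi^{-1}\right| \leq 2\,|\det\widehat{A}_\phi - \det A_\phi| \leq 2C_0\epsilon\|\phi-\Phi(P(\phi))\|_{L^2(S)}^2.$$
Taking $\widetilde{C}_0 = 2C_0$ then gives both desired estimates. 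There is no serious obstacle here: the only ``smallness'' needed is that $\epsilon$ is chosen (depending on $n,R$) to dominate the uniform bound on $\|\phi-\Phi(P(\phi))\|_{L^2(S)}$, which is completely routine given that all the genuine technical work was absorbed into Proposition~\ref{A_err}.
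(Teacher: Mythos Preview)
Your argument is correct and follows essentially the same route as the paper: both proofs bootstrap directly from Proposition~\ref{A_err}, use $\widehat{A}_\phi\geq\id$ to get $\|\widehat{A}_\phi^{-1}\|\leq 1$ and $\det\widehat{A}_\phi\geq 1$, impose a smallness condition on $\epsilon$ (in terms of $C_0$ and $R+R_1$) to make $\|A_\phi-\widehat{A}_\phi\|\leq 1/2$ and $|\det A_\phi-\det\widehat{A}_\phi|\leq 1/2$, and conclude with $\widetilde C_0=2C_0$. The only cosmetic difference is that you invoke a Neumann series for invertibility whereas the paper asserts the operator bound $\frac{1}{2}\id\leq A_\phi$ directly.
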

\begin{proof}
 When $\phi\in\cB(R)$, Lemma \ref{well_defined_P} implies $\Phi(P(\phi))\in\cB(R_1)$. Since Lemma \ref{A_formula} implies that $ \id\leq \widehat{A}_\phi\leq (d+1)\id$, we have 
$$1\leq \det \widehat{A}_\phi\leq (d+1)^{dn}.$$
Therefore, when $\epsilon(g,r)\leq \min\{1/[2C_0(R+R_1)],1/[2C_0(R+R_1)^2]\}$, by the norm estimate and the determinant estimate in Proposition \ref{A_err} we have
$$\frac{1}{2}\id\leq A_\phi\leq \frac{2d+3}{2}\id$$
and
$$\frac{1}{2}\leq\det A_\phi\det\widehat{A}_\phi\leq (d+1)^{dn}\left[(d+1)^{dn}+\frac{1}{2}\right],$$
which imply that
\begin{align*}
\left\|A_\phi^{-1}-\widehat{A}_\phi^{-1} \right\| \leq&\left\|A_\phi^{-1}\right\| \left\|A_\phi-\widehat{A}_\phi \right\| \left\|\widehat{A}_\phi^{-1} \right\|\leq 2C_0\epsilon\|\phi-\Phi(P(\phi))\|_{L^2(S)}
\end{align*}
and that
$$\left|\det A^{-1}_\phi-\det\widehat{A}^{-1}_\phi \right| \leq 2C_0\epsilon\|\phi-\Phi(P(\phi))\|_{L^2(S)}^2.$$
The corollary follows from choosing $\widetilde C_0=2C_0$.
\end{proof}

We need one further estimate on the Jacobian of $\widehat{A}_\phi^{-1}\circ E_\phi$.
\begin{prop}\label{AE_err}
For any $\phi\in\cB(R)$ (assuming $x=P(\phi)$), any $dn$-dimensional subspace $\cW\subset \cL$ and any $X\in \cW$ such that $\langle X,X\rangle_{G_\phi}=1$, we have
$$\det(\widehat{A}_\phi^{-1})\Jac_{G,\cW}E_\phi\leq 1-C_1\|X-d\Phi\circ \widehat{A}_\phi^{-1}\circ E_\phi(X)\|^2_{L^2(S)}$$ 
for some $C_1=C_1(n,R)$.
\end{prop}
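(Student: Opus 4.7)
The strategy is to turn the slack in the inequality chain of Corollary~\ref{AE_det_ineq} into the desired error term. Setting $T:=\widehat{A}_\phi^{-1}\circ E_\phi$, the left-hand side equals $\Jac_{G,\cW}(T)$. Since $E_\phi|_{\cV_\phi^\perp}=0$, we factor $T|_\cW=T|_{\cV_\phi}\circ P_{\cV_\phi}|_\cW$ to obtain $\Jac_{G,\cW}(T)=a\cdot b$ with $a:=\Jac_{G,\cV_\phi}(T)$ and $b:=\Jac_{G,\cW}(P_{\cV_\phi}|_\cW)$. Introducing the auxiliary self-adjoint operator $T':=d\Phi\circ T|_{\cV_\phi}:\cV_\phi\to\cV_\phi$ and decomposing a unit $X\in\cW$ orthogonally in $G_\phi$ as $X=X_\parallel+X_\perp$ with $X_\parallel=P_{\cV_\phi}(X)$, one has $d\Phi\circ T(X)=T'(X_\parallel)$ and
$$\|X-d\Phi\circ T(X)\|^2_{G_\phi}=\|X_\parallel-T'(X_\parallel)\|^2_{G_\phi}+\|X_\perp\|^2_{G_\phi}.$$
Since $(1-a)+(1-b)\leq 2(1-ab)$ for $a,b\in[0,1]$, it suffices to dominate the two summands by constants times $1-a$ and $1-b$ respectively.

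The perpendicular piece is handled by principal angles: if $0\leq\theta_1\leq\cdots\leq\theta_{dn}\leq\pi/2$ are the principal angles between $\cW$ and $\cV_\phi$ (so $b=\prod_j\cos\theta_j$), then for unit $X\in\cW$,
$$\|X_\perp\|^2_{G_\phi}\leq\sin^2\theta_{dn}\leq 2(1-\cos\theta_{dn})\leq 2(1-b),$$
using $\cos\theta_{dn}\geq\prod_j\cos\theta_j$. For the parallel piece, Lemmas~\ref{A_formula} and~\ref{E_formula} produce the explicit formula
$${}_{W_\phi}[T']_{W_\phi}=\tfrac{n+1}{n}U_\phi\bigl(\id+\tfrac{1}{n}\widehat{Q}_\phi\bigr)^{-1}U_\phi^T,$$
which is symmetric positive definite; hence $\|X_\parallel-T'(X_\parallel)\|_{G_\phi}\leq\|T'-\id\|\,\|X_\parallel\|_{G_\phi}\leq\|T'-\id\|$, and everything reduces to the quantitative claim $\|T'-\id\|^2\leq C(n,R)(1-a)$.

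For this key step (the main obstacle), Corollary~\ref{AE_det_ineq}'s chain gives $a\leq(\det U_\phi^TU_\phi)^{(n-1)/(2(n+1))}$, so $1-a=:\delta$ small forces $\det U_\phi^TU_\phi\geq 1-C(n)\delta$. Combined with $\sum_j\lambda_j=dn$ from Lemma~\ref{easy_ineq}(2), this constrains each eigenvalue $\lambda_j$ of $U_\phi^TU_\phi$ to a neighborhood of $1$ shrinking with $\delta$; a second-order Taylor expansion of $\log$ around $1$ then sharpens the AM--GM inequality to $\sum_j(\lambda_j-1)^2\leq C'(n)\delta$, i.e.\ $\|U_\phi^TU_\phi-\id\|^2\leq C'(n)\delta$. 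Eigenvalue interlacing in Lemma~\ref{easy_ineq}(2) yields $\|\widehat{Q}_\phi-\id\|\leq\|U_\phi^TU_\phi-\id\|$; a polar decomposition of $U_\phi$ gives $\|U_\phi U_\phi^T-\id\|\leq C''\|U_\phi^TU_\phi-\id\|$; and writing
$$T'-\id=(U_\phi U_\phi^T-\id)+\tfrac{n+1}{n}U_\phi\bigl[(\id+\tfrac{1}{n}\widehat{Q}_\phi)^{-1}-\tfrac{n}{n+1}\id\bigr]U_\phi^T$$
together with the a priori bounds $\|U_\phi\|^2\leq dn$ (Lemma~\ref{easy_ineq}(4)) and $\|(\id+\tfrac{1}{n}\widehat{Q}_\phi)^{-1}\|\leq 1$ (since $\widehat{Q}_\phi\geq 0$) produces $\|T'-\id\|^2\leq C(n)\delta$. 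For $\delta$ not small, the same a priori bounds make $\|T'-\id\|$ a uniform constant, absorbing that regime by rescaling. The subtle point is verifying that the constraints $\sum\lambda_j=dn$ and $\prod\lambda_j$ close to $1$ genuinely pin every $\lambda_j$ to the range where the Taylor bound is effective; this follows because one small $\lambda_j$ would force some other $\lambda_k$ very large, contradicting the fixed-trace constraint for $\delta$ sufficiently small.

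Combining the two pieces yields $\|X-d\Phi\circ T(X)\|^2_{G_\phi}\leq 2\max(C(n,R),2)(1-ab)$. The density $dn\,\overline{\rho}_\phi(s)\lambda(x,s)$ of $G_\phi$ relative to $ds$ is uniformly bounded below by some $c_0(n,R)>0$ thanks to \eqref{rho_phi_est} and \eqref{density_abs_est}, so $\|\cdot\|^2_{L^2(S)}\leq c_0^{-1}\|\cdot\|^2_{G_\phi}$, and the proposition follows with $C_1=c_0/(2\max(C(n,R),2))$.
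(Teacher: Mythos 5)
Your proof is correct, and for the crux of the argument (the $\cW=\cV_\phi$ case) it takes a genuinely different route. The paper conjugates $\id - T'$ by $U_\phi$ to get $U_\phi\,\cG(U_\phi)\,U_\phi^{-1}$, which forces it to control $\|U_\phi^{-1}\|=\lambda_1^{-1/2}$ via Lemma~\ref{lower_bd_eigenval}; it then uses $\lambda_1\geq C_2$ a second time to convert the determinant deficit into a lower bound on $(\lambda_{dn}-\lambda_1)^2$ via the rearrangement $\lambda_1\lambda_{dn}=\bigl(\tfrac{\lambda_1+\lambda_{dn}}{2}\bigr)^2-\tfrac14(\lambda_{dn}-\lambda_1)^2$. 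You bypass Lemma~\ref{lower_bd_eigenval} entirely by (i) expressing $T'-\id$ additively rather than as a conjugation, reducing everything to $\|U_\phi^TU_\phi-\id\|$ through the factorization $(\id+\tfrac1n\widehat Q_\phi)^{-1}-\tfrac{n}{n+1}\id = \tfrac{1}{n+1}(\id+\tfrac1n\widehat Q_\phi)^{-1}(\id-\widehat Q_\phi)$ together with $\|U_\phi\|^2\leq dn$; and (ii) a second-order $\log$/AM--GM sharpening that bounds $\sum_j(\lambda_j-1)^2$ by a multiple of $1-\det U_\phi^TU_\phi$. The orthogonal piece via principal angles is essentially equivalent to the paper's Lemma~\ref{proj_det}. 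One correction: your stated justification for the ``subtle point'' -- that one small $\lambda_j$ would force another $\lambda_k$ large and so violate the trace constraint -- is not right (the trace $\sum\lambda_j=dn$ is perfectly compatible with $\lambda_1$ near $0$ and the rest bounded; it is the determinant close to $1$ that pins $\lambda_1$ away from $0$). In fact no lower bound on $\lambda_1$ is needed at all: the Lagrange-remainder form $\log x=(x-1)-\tfrac{(x-1)^2}{2x_*^2}$ with $x_*$ between $1$ and $x$ gives $\log x\leq (x-1)-\tfrac{(x-1)^2}{2(dn)^2}$ for all $0<x\leq dn$, and only the trace-derived upper bound $\lambda_j\leq dn$ is used. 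With that correction the argument is complete.
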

\begin{proof}
First, we proof the case when $\cW=\cV_\phi=\mathrm{span}Z_\phi$, where $\cV_\phi,Z_\phi$ are defined in the previous chapter. 

\textbf{Special case: $\cW=\cV_\phi$. }

Since the left hand side of the inequality can be written in terms of eigenvalues of $U_\phi^TU_\phi$, our main idea is to estimate $\|X-d\Phi\circ \widehat{A}_\phi^{-1}\circ E_\phi(X)\|^2_{L^2(S)}$ by eigenvalues of $U_\phi^TU_\phi$.

Recall that in Lemma \ref{A_formula} and Lemma \ref{E_formula} we have 
$$_{\Xi}[\widehat{A}_\phi]_{\Xi}=\id+\frac{1}{n}\widehat{Q}_\phi$$
and
$$_{\Xi}[E_\phi|_{\cV_\phi}]_{W_\phi}=\frac{n+1}{n} U_\phi^T,$$
we can conclude that
\begin{align}\label{AE_formula_1}
_{\Xi}[\widehat{A}_\phi^{-1}\circ E_\phi|_{\cV_\phi}]_{W_\phi}=\frac{n+1}{n}\left(\id+\frac{1}{n}\widehat{Q}_\phi\right)^{-1}U_\phi^T.
\end{align}
Let $\phi_0:=\Phi(P(\phi))$. We define a map $i_\phi:T_{\phi_0}\cL\to T_\phi\cL$ such that, if we identify both tangent spaces as $\cL$, $i_\phi(Y)=Y$ for all $Y\in \cL=T_{\phi_0}\cL$. In particular, $i_\phi(Z_{\phi_0})=Z_\phi$. Notice that $d\Phi(\Xi)=Z_{\phi_0}$, \eqref{AE_formula_1} implies that
$$_{ Z_\phi}[i_\phi\circ d\Phi\circ \widehat{A}_\phi^{-1}\circ E_\phi|_{\cV_\phi}]_{W_\phi}=\frac{n+1}{n}\left(\id+\frac{1}{n}\widehat{Q}_\phi\right)^{-1}U_\phi^T.$$
Hence 
\begin{align*}
_{W_\phi}[i_\phi\circ d\Phi\circ \widehat{A}_\phi^{-1}\circ E_\phi|_{\cV_\phi}]_{W_\phi}
=&\frac{n+1}{n}~_{W_\phi}[\id]_{ Z_\phi}\left(\id+\frac{1}{n}\widehat{Q}_\phi\right)^{-1}U_\phi^T 
=\frac{n+1}{n}U_\phi\left(\id+\frac{1}{n}\widehat{Q}_\phi\right)^{-1}U_\phi^T .
\end{align*}
Therefore 
$$_{W_\phi}[\id|_{\cV_\phi}-i_\phi\circ d\Phi\circ \widehat{A}_\phi^{-1}\circ E_\phi|_{\cV_\phi}]_{W_\phi}=\id-\frac{n+1}{n}U_\phi\left(\id+\frac{1}{n}\widehat{Q}_\phi\right)^{-1}U_\phi^T .$$
For simplicity, let 
$$\cG(U_\phi)=\id-\frac{n+1}{n}\left(\id+\frac{1}{n}\widehat{Q}_\phi\right)^{-1}U_\phi^TU_\phi\in\mathrm{Mat}_{dn\times dn}(\RR)$$
and hence 
\begin{align}\label{AE_formula_2}
_{W_\phi}[\id-i_\phi\circ d\Phi\circ \widehat{A}_\phi^{-1}\circ E_\phi|_{\cV_\phi}]_{W_\phi}=U_\phi\cG(U_\phi)U_\phi^{-1}.
\end{align}
Assume that $\widehat Q_\phi$ has eigenvalues $0\leq\eta_1\leq \eta_2\leq...\leq\eta_{dn}$ and $U_\phi^TU_\phi$ has eigenvalues $0\leq\lambda_1\leq\lambda_2\leq...\leq\lambda_{dn}$. Lemma \ref{easy_ineq} shows that
$$\sum_{t=1}^{dn}\lambda_t=\sum_{t=1}^{dn}\eta_t=dn$$
and
\begin{align*}
\lambda_1\leq \eta_1\leq 1\leq \eta_{dn}\leq\lambda_{dn}\leq dn.
\end{align*}
Therefore
\begin{align}\label{eigenval_est}
\|\cG(U_\phi)\|=&\left\|\id-\frac{n+1}{n}\left(\id+\frac{1}{n}\widehat{Q}_\phi\right)^{-1}U_\phi^TU_\phi\right\| \nonumber \\
\leq&\left\|\id-\frac{n+1}{n}\left(\id+\frac{1}{n}\widehat{Q}_\phi\right)^{-1}\right\| +\left\|\frac{n+1}{n}\left(\id+\frac{1}{n}\widehat{Q}_\phi\right)^{-1}(U_\phi^TU_\phi-\id)\right\| \nonumber \\
\leq& \left\|\id-\frac{n+1}{n}\left(\id+\frac{1}{n}\widehat{Q}_\phi\right)^{-1}\right\|+\frac{n+1}{n}\|(U_\phi^TU_\phi-\id)\| \nonumber \\
\leq&\frac{n+1}{n}\left(\frac{1}{1+\frac{1}{n}\eta_1}-\frac{1}{1+\frac{1}{n}\eta_{dn}}+\lambda_{dn}-\lambda_1\right) \nonumber\\
\leq&\frac{n+1}{n}\left(\frac{1}{1+\frac{1}{n}\lambda_1}-\frac{1}{1+\frac{1}{n}\lambda_{dn}}+\lambda_{dn}-\lambda_1\right) 
\leq\left(\frac{n+1}{n}\right)^2(\lambda_{dn}-\lambda_{1}),
\end{align}
where the third inequality follows from the fact that if a symmetric matrix has non-positive and non-negative eigenvalues, then its norm is controlled by the difference between its largest and smallest eigenvalues. In order to estimate $\left\|U_\phi\cG(U_\phi)U_\phi^{-1}\right\|$, we need the following result.
\begin{lem}\label{lower_bd_eigenval}
For any $\phi\in\cB(R)$ (assuming $x=P(\phi)$), there exists a positive constant $C_2=C_2(n,R)>0$ such that 
$$\lambda_1\geq C_2,$$
where $0\leq\lambda_1\leq\lambda_2\leq...\leq\lambda_{dn}$ are eigenvalues of $U_\phi^TU_\phi$.
\end{lem}
\begin{proof}[Proof of Lemma \ref{lower_bd_eigenval}.]
Let $v\in \RR^{dn}$ be an arbitary unit vector in an Euclidean space. Define
$$c_2(n)=\frac{\mathrm{Area}(\{w\in S^{dn-1}:\langle w,v\rangle_{\RR^{dn}}\geq 1/2\},ds)}{\mathrm{Area}(S^{dn-1},ds)}>0,$$
where $\langle\cdot,\cdot\rangle_{\RR^{dn}}$ denotes the inner product in the standard Euclidean space. $c_2$ is independent of the choice of $v$ due to spherical symmetry. For any unit vector $v=(a_{1,0},...,a_{1,d-1},...,a_{n,0},...,a_{n,d-1})^T\in\mathrm{Mat}_{dn\times 1}(\RR)=\RR^{dn}$, we construct a vector (see Subsection \ref{ss51} and \ref{ss52} for definitions of $\xi_{l,t,g}$ and $X_{l,t,g}$.)
$$\xi_v=\sum_{l=1}^{n}\sum_{t=0}^{d-1}a_{l,t}\xi_{l,t,g}\in T_xM.$$
Then \eqref{big_fat_matrix} implies that 
\begin{align}\label{lower_bd_technique}
v^TU_\phi^TU_\phi v=&dn\int_{S}\overline\rho_\phi(s)\left(\sum_{l=1}^{n}\sum_{t=0}^{d-1}a_{l,t}X_{l,t,g}(s)\right)^2d\mu_x(s) \nonumber\\
=&dn\int_{S}\overline\rho_\phi(s)\langle\grad\Phi_s(x),\xi_v\rangle^2d\mu_x(s) \nonumber\\
=&dn\int_{T^1_xM}\overline\rho_\phi(\alpha_g^{-1}(s))\langle \xi,\xi_v\rangle^2ds_x(\xi) \nonumber\\
\geq&dn\int_{\{w\in T^1_xM:\langle \xi,\xi_v\rangle \geq 1/2\}}\overline\rho_\phi(\alpha_g^{-1}(s))\langle \xi,\xi_v\rangle^2ds_x(\xi) \nonumber\\
\geq&\frac{dn}{4}\int_{\{w\in T^1_xM:\langle w,v\rangle \geq 1/2\}}\overline\rho_\phi(\alpha_g^{-1}(s))ds_x(\xi). 
\end{align}
Since \eqref{rho_phi_est} implies that
\begin{align}\label{rho_phi_bar_est}
c_0^{-2}\leq\overline\rho_\phi\leq c_0^2.
\end{align}
Therefore we have
$$\frac{dn}{4}\int_{\{\xi\in T^1_xM:\langle \xi,\xi_v\rangle \geq 1/2\}}\overline\rho_\phi(\alpha^{-1}(s))ds_x(\xi)\geq \frac{dn}{4}c_2(n)c_0^{-2}.$$
As a consequence of \eqref{lower_bd_technique}, we have
$$v^TU_\phi^TU_\phi v\geq \frac{dn}{4}c_2(n)c_0^{-2},$$
which implies that
$$\|U_\phi^TU_\phi\|\geq\frac{dn}{4}c_2(n)c_0^{-2}.$$
In particular, let $C_2=C_2(n,R)=\frac{dn}{4}c_2(n)c_0^{-2}$, we have $\lambda_1\geq C_2$.
\end{proof}
Going back to the proof of Proposition \ref{AE_err}, Lemma \ref{lower_bd_eigenval}, \eqref{eigenval_ineq} and \eqref{eigenval_est} imply that
\begin{align}
\left\|~_{W_\phi}[\id-i_\phi\circ d\Phi\circ \widehat{A}_\phi^{-1}\circ E_\phi|_{\cV_\phi}]_{W_\phi}\right\|=\left\|U_\phi\cG(U_\phi)U_\phi^{-1}\right\| 
\leq&\left\|U_\phi\right\|\left\|\cG(U_\phi)\right\|\|U_\phi^{-1}\| \nonumber \\
\leq &\lambda_{dn}^{1/2}\cdot\left(\frac{n+1}{n}\right)^2(\lambda_{dn}-\lambda_{1})\cdot\lambda_{1}^{-1/2} \nonumber \\
\leq &\sqrt{dn}\cdot\left(\frac{n+1}{n}\right)^2C_2^{-1/2}(\lambda_{dn}-\lambda_{1}), \nonumber
\end{align}
which implies that
$$\|X-d\Phi\circ \widehat{A}_\phi^{-1}\circ E_\phi(X)\|_{G_\phi}\leq\sqrt{dn}\cdot\left(\frac{n+1}{n}\right)^2C_2^{-1/2}(\lambda_{dn}-\lambda_{1}).$$
Notice that $\|\cdot\|_{G_\phi}=\|\cdot\|_{L^2(S,dn\overline\rho_\phi d\mu_x)}=\|\cdot\|_{L^2(S,dn\overline\rho_\phi \lambda(x,s)ds)}$, it follows from \eqref{density_abs_est} and \eqref{rho_phi_bar_est} that 
\begin{align}\label{norm_ineq}
\sqrt{\frac{dn}{2}c_0^{-2}e^{-(dn+d-2)R_1}}\|\cdot\|_{L^2(S)}\leq\|\cdot\|_{G_\phi}\leq \sqrt{2dnc_0^2e^{(dn+d-2)R_1}}\|\cdot\|_{L^2(S)}.
\end{align}
Write $c_3=c_3(n,R)=\sqrt{2(dn)^{-1}c_0^2e^{(dn+d-2)R_1}}\cdot\sqrt{dn}\cdot\left(\frac{n+1}{n}\right)^2C_2^{-1/2}$. Therefore
\begin{align}\label{AE_est}
\|X-d\Phi\circ \widehat{A}_\phi^{-1}\circ E_\phi(X)\|_{L^2(S)}\leq c_3(\lambda_{dn}-\lambda_{1}).
\end{align}
Following \eqref{AE_det_est} and Lemma \ref{lower_bd_eigenval}, we have
\begin{align*}
\det(\widehat{A}_\phi^{-1})\Jac_{G,\cV}E_\phi\leq&(\det{U^T_\phi U_\phi})^{\frac{1}{2}-\frac{1}{n+1}} \\
=&\left(\lambda_1\lambda_{dn}\prod_{t=2}^{dn-1}\lambda_t\right)^{\frac{1}{2}-\frac{1}{n+1}} \\
=&\left[\left(\frac{\lambda_1+\lambda_{dn}}{2}\right)^2\prod_{t=2}^{dn-1}\lambda_t-\frac{1}{4}(\lambda_1-\lambda_{dn})^2\prod_{t=2}^{dn-1}\lambda_t\right]^{\frac{1}{2}-\frac{1}{n+1}} \\
\leq&\left[\left(\frac{\sum_{t=1}^{dn}\lambda_t}{dn}\right)^{dn}-\frac{1}{4}(\lambda_1-\lambda_{dn})^2\lambda_1^{dn-2}\right]^{\frac{1}{2}-\frac{1}{n+1}} \\
\leq&\left[1-\frac{1}{4}C_2^{dn-2}(\lambda_1-\lambda_{dn})^2\right]^{\frac{1}{2}-\frac{1}{n+1}}
\leq1-\left(\frac{1}{2}-\frac{1}{n+1}\right)\frac{1}{4}C_2^{dn-2}(\lambda_1-\lambda_{dn})^2
\end{align*}
Choose 
$$C_1'=C_1'(n,R)=\left(\frac{1}{2}-\frac{1}{n+1}\right)\frac{1}{4}C_2^{dn-2}c_3^{-2}.$$
Then it follows from \eqref{AE_est} that
\begin{align}\label{W=V}
\det(\widehat{A}_\phi^{-1})\Jac_{G,\cV}E_\phi
\leq&1-\left(\frac{1}{2}-\frac{1}{n+1}\right)\frac{1}{4}C_2^{dn-2}(\lambda_1-\lambda_{dn})^2 \nonumber\\
\leq&1-C_1'c_3^2(\lambda_1-\lambda_{dn})^2 
\leq1-C_1'\|X-d\Phi\circ \widehat{A}_\phi^{-1}\circ E_\phi(X)\|^2_{L^2(S)}.
\end{align}

Now we return to the general case when $\cW$ is an arbitary $dn$-dimensional subspace of $\cL$. For any $X\in\cW$, we write $X=X^\parallel+X^\perp$, where $X^\perp\in\cV_\phi^\perp$ and $X^\parallel\in\cV_\phi$. We first make the following claim.
\begin{lem}\label{proj_det}
Let $P_{\cV_\phi}:T_\phi\cL=\cL\to\cV_\phi$ be the orthogonal projection onto $\cV_\phi$. Then there exists some constant $C_3=C_3(n)>0$ such that
$$\Jac_{G,\cW}P_{\cV_\phi}=\Jac_G\left(\left.P_{\cV_\phi}\right|_\cW\right)\leq 1-C_3\|X^\perp\|_{G_\phi}^2.$$
\end{lem}
\begin{proof}[Proof of Lemma \ref{proj_det}]
We follow the same idea as in Corollary \ref{AE_det_ineq} and \eqref{proj_matrix_jacobian_est}. Choose an othonormal basis $\{\widetilde X_1=X,\widetilde X_2,...,\widetilde X_{dn}\}$ in $\cW$. Then \eqref{proj_matrix_jacobian_est} implies that
\begin{align*}
\Jac_{G,\cW}P_{\cV_\phi} =&\Jac_G\left(P_{\cV_\phi}|_\cW\right)\\
\leq&\sqrt{\left(\frac{1}{dn}\sum_{t=1}^{dn}\|P_{\cV_\phi}(\widetilde X_t)\|_{G_\phi}^2\right)^{dn}} 
\leq\sqrt{\left(1-\frac{\|X^\perp\|_{G_\phi}^2}{dn}\right)^{dn}} 
\leq1-\frac{1}{2}\left(1-\frac{1}{dn}\right)^{\frac{dn}{2}-1}\|X^\perp\|_{G_\phi}^2.
\end{align*}
The last inequality follows from the Mean Value Theorem and the fact that $\|X^\perp\|_{G_\phi}\leq 1$. Choose $C_3(n)=\frac{1}{2}\left(1-\frac{1}{dn}\right)^{\frac{dn}{2}-1}$ and the lemma follows.
\end{proof}
Going back to the proof of Proposition \ref{AE_err}, \eqref{norm_ineq} and \eqref{W=V} imply that
\begin{align*}
\det(\widehat{A}_\phi^{-1})\Jac_{G,\cW}E_\phi 
=&\det(\widehat{A}_\phi^{-1})\Jac_{G}\left(E_\phi|_{\cV_\phi}\circ P_{\cV_\phi}\right) \\
=&\det(\widehat{A}_\phi^{-1})\Jac_{G,\cV_\phi}E_\phi \cdot \Jac_G\left(\left.P_{\cV_\phi}\right|_\cW\right)  \\
\leq&\det(\widehat{A}_\phi^{-1})\Jac_{G,\cV_\phi}E_\phi\left(1-C_3\|X^\perp\|_{G_\phi}^2\right) \\
\leq&\det(\widehat{A}_\phi^{-1})\Jac_{G,\cV_\phi}E_\phi\left(1-\frac{dn}{2}c_0^{-2}e^{-(dn+d-2)R_1}C_3\|X^\perp\|_{L^2(S)}^2\right) \\
\leq&\left(1-\frac{C_1'}{\left\|X^\parallel\right\|^2_{G_\phi}}\left\|X^\parallel-d\Phi\circ \widehat{A}_\phi^{-1}\circ E_\phi(X^\parallel)\right\|^2_{L^2(S)}\right)\left(1-c_4\|X^\perp\|_{L^2(S)}^2\right) \\
\leq&\left(1-C_1'\left\|X^\parallel-d\Phi\circ \widehat{A}_\phi^{-1}\circ E_\phi(X^\parallel)\right\|^2_{L^2(S)}\right)\left(1-c_4\|X^\perp\|_{L^2(S)}^2\right) \\
=&\left(1-C_1'\left\|X^\parallel-d\Phi\circ \widehat{A}_\phi^{-1}\circ E_\phi(X)\right\|^2_{L^2(S)}\right)\left(1-c_4\|X^\perp\|_{L^2(S)}^2\right) \\
\leq&1-\frac{1}{2}\left(C_1'\left\|X^\parallel-d\Phi\circ \widehat{A}_\phi^{-1}\circ E_\phi(X)\right\|^2_{L^2(S)}+c_4\|X^\perp\|_{L^2(S)}^2\right),
\end{align*}
where $c_4=c_4(n,R)=\frac{dn}{2}c_0^{-2}e^{-(dn+d-2)R_1}C_3$ and 
the last inequality follows from the fact that $(1-2a)(1-2b)\leq1-a-b$ provided $0\leq 1-2a,1-2b\leq 1$. Take 
$$C_1=C_1(n,R)=\frac{1}{2}\min\left\{\frac{1}{2}C_1',\frac{1}{2}c_4\right\},$$
we can summarize that
\begin{align*}
\det(\widehat{A}_\phi^{-1})\Jac_{G,\cW}E_\phi
\leq&1-\frac{1}{2}\left(C_1'\left\|X^\parallel-d\Phi\circ \widehat{A}_\phi^{-1}\circ E_\phi(X)\right\|^2_{L^2(S)}+c_4\|X^\perp\|_{L^2(S)}^2\right) \\
\leq&1-2C_1\left(\left\|X^\parallel-d\Phi\circ \widehat{A}_\phi^{-1}\circ E_\phi(X)\right\|^2_{L^2(S)}+\|X^\perp\|_{L^2(S)}^2\right) \\
\leq&1-C_1\left\|X^\perp+X^\parallel-d\Phi\circ \widehat{A}_\phi^{-1}\circ E_\phi(X)\right\|_{L^2(S)}^2 \\
=&1-C_1\left\|X-d\Phi\circ \widehat{A}_\phi^{-1}\circ E_\phi(X)\right\|_{L^2(S)}^2. \tag*{\qedhere}
\end{align*}
\end{proof}
\section{A compression trick}\label{s7}
This subsection is a review of \cite[Section 7, A compression trick]{Burago2} in the more general case of $\KK\mathbf{H}^n$. Recall in Proposition \ref{key_prop}, we need a ''projection'' map $P_\sigma$. In the previous two sections, we constructed a projection map $P$, but whether it is area non-increasing is not clear. In this subsection, we will give our construction of $P_\sigma$ as a small perturbation of $P$. Notice that the second assertion in Definition \ref{Projection} suggests that $P$ is a locally ``orthogonal'' projection in the sense that $d_\phi P$ vanish on the orthogonal complement of $\mathrm{Im}(d_{P(\phi)}\Phi)$. Therefore $P_\sigma$ constructed as a small perturbation of $P$ can be viewed as a ''almost locally orthogonal'' projection.

\begin{notation}
We define a ``height'' map $h(\phi):\cB(R)\to\RR_+$ as follows.
$$h(\phi)=\|\phi-\Phi(P(\phi))\|_{L^2(S)}.$$
Similar to \cite{Burago2}, we construct a map $F_c:\cB(R)\to M\times\RR_+$ by 
$$F_c(\phi)=(P(\phi),ch(\phi)).$$
Since $h$ is smooth on $\cB(R)\setminus\Phi(M)$, so is $F_c$.
\end{notation}
\begin{lem}\label{preparation}
There exist positive constants $C_5=C_5(n,R),C_6=C_6(n,R)>0$ such that for any $0\leq c\leq C_5$ and any $\phi\in\cB(R)\setminus\Phi(M)$
$$\Jac_GF_c\leq 1+C_6\epsilon h^2(\phi)$$
provided $\epsilon\ll 1$.
\end{lem}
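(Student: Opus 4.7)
The plan is to decompose the differential $d_\phi F_c$ using the product structure of the target $M\times\RR_+$ and combine the estimates from Section~\ref{s6}. Fix a $dn$-dimensional subspace $\cW\subset T_\phi\cL$ with a $G_\phi$-orthonormal basis $\{X_1,\dots,X_{dn}\}$. Let $\mathbf{P}$ denote the matrix of $d_\phi P|_\cW:\cW\to T_{P(\phi)}M$ in this basis (together with any orthonormal basis of $T_{P(\phi)}M$), and $\mathbf{h}\in\RR^{dn}$ the column vector with entries $d_\phi h(X_i)$. The Gram matrix of $d_\phi F_c|_\cW$ is $\mathbf{P}^T\mathbf{P}+c^2\mathbf{h}\mathbf{h}^T$, so the matrix determinant lemma (the rank-deficient case is handled separately by Cauchy--Binet) gives
\[
\Jac^2_{G,\cW}F_c = (\det\mathbf{P})^2\bigl(1+c^2\mathbf{h}^T(\mathbf{P}^T\mathbf{P})^{-1}\mathbf{h}\bigr) = \prod_i\sigma_i^2 + c^2\sum_i (h_i')^2\prod_{j\neq i}\sigma_j^2,
\]
where $\sigma_1\leq\cdots\leq\sigma_{dn}$ are the singular values of $\mathbf{P}$ and the $h_i'$ are the coordinates of $\mathbf{h}$ in the corresponding right-singular basis.

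Next I would bound the two pieces using the results of Section~\ref{s6}. Differentiating $h(\phi)^2=\|\phi-\Phi(P(\phi))\|_{L^2}^2$ and applying Cauchy--Schwarz yields $|d_\phi h(X)|\leq\|X-d\Phi\circ d_\phi P(X)\|_{L^2}$; using $d_\phi P=A_\phi^{-1}\circ E_\phi$ (Lemma~\ref{AE_decomp}) together with the operator-norm bound of Corollary~\ref{A_invert} and \eqref{norm_ineq} to pass between $L^2$- and $G_\phi$-norms gives, for unit $X\in\cW$,
\[
|d_\phi h(X)|\leq D(X)+C\,\epsilon\, h(\phi),\qquad D(X):=\|X-d\Phi\circ\widehat A_\phi^{-1}\circ E_\phi(X)\|_{L^2}.
\]
Setting $D^*:=\sup\{D(X):\|X\|_{G_\phi}=1\}$, Proposition~\ref{AE_err} together with the determinant estimate in Corollary~\ref{A_invert} gives
\[
|\det\mathbf{P}|=|\det A_\phi^{-1}|\cdot\Jac_{G,\cW}E_\phi\leq 1-C_1(D^*)^2+C\,\epsilon\, h^2(\phi),
\]
and the non-negativity of the left-hand side of Proposition~\ref{AE_err} forces the a priori bound $(D^*)^2\leq 1/C_1$ for free.

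For the $c^2$ correction, Corollary~\ref{AE_det_ineq} together with Corollary~\ref{A_invert} bounds $\sigma_i\leq\|d_\phi P\|_{\mathrm{op}}\leq C$, while the previous step yields $\|\mathbf{h}\|^2\leq dn\cdot\|d_\phi h|_\cW\|_{\mathrm{op}}^2\leq K_0\bigl((D^*)^2+\epsilon^2 h^2(\phi)\bigr)$. Since $\prod_{j\neq i}\sigma_j^2\leq C^{2(dn-1)}$ never involves $\sigma_1$ in isolation, one deduces
\[
c^2\sum_i(h_i')^2\prod_{j\neq i}\sigma_j^2\leq c^2 K_1\bigl((D^*)^2+\epsilon^2 h^2(\phi)\bigr).
\]
Combining these, expanding the square $(1-C_1(D^*)^2+C\epsilon h^2)^2$, and using $(D^*)^4\leq(D^*)^2/C_1$ from the a~priori bound to absorb the quartic term, one obtains
\[
\Jac^2_{G,\cW}F_c \leq 1+(c^2 K_1-C_1)(D^*)^2+C_2\,\epsilon\, h^2(\phi),
\]
with the residual $\epsilon^2 h^4$ terms absorbed using $\epsilon<1$. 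Taking $C_5:=\sqrt{C_1/K_1}$ makes the coefficient of $(D^*)^2$ non-positive for every $c\leq C_5$; the square root of the displayed inequality then yields the claimed estimate with some $C_6=C_6(n,R)$.

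The principal obstacle is the potential smallness of $\sigma_1$: if $\cW$ nearly aligns with a ``kernel'' direction of $d_\phi P$, the inverse $(\mathbf{P}^T\mathbf{P})^{-1}$ blows up while $d_\phi h$ is simultaneously large in the same direction, and a naive Cauchy--Schwarz applied to $\mathbf{h}^T(\mathbf{P}^T\mathbf{P})^{-1}\mathbf{h}$ would give an uncontrollable bound. The saving feature is structural: in the explicit expansion $\sum_i(h_i')^2\prod_{j\neq i}\sigma_j^2$ the smallest singular value $\sigma_1$ never appears alone, being always paired with larger $\sigma_j$'s whose product is uniformly bounded by $\|d_\phi P\|_{\mathrm{op}}^{2(dn-1)}$. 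This cancellation lets the $(D^*)^2$ contribution from the $\mathbf{h}$-correction be linearly balanced against the $-C_1(D^*)^2$ coming from $\det\mathbf{P}$, which is precisely why the trade-off closes for all sufficiently small $c$.
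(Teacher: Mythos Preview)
Your argument is correct and reaches the same conclusion, but the organization differs from the paper's. The paper first picks a $G_\phi$-orthonormal basis $\{\widetilde X_1,\dots,\widetilde X_{dn}\}$ of $\cW$ so that $d_\phi h$ is concentrated in $\widetilde X_1$ and $d_\phi P|_\cW$ is upper triangular with diagonal entries $a_{ll}\geq 0$; this yields $\Jac_{G,\cW}F_c=\sqrt{a_{11}^2+t^2}\prod_{l\geq 2}a_{ll}$ and then splits into two cases according to whether $a_{11}$ is below or above the threshold $[2(dn+d)/\sqrt{dn}]^{-dn}$. The small-$a_{11}$ case is disposed of by a crude product bound, while in the large-$a_{11}$ case the inequality $\sqrt{a_{11}^2+t^2}\leq a_{11}+t^2/(2a_{11})$ is combined with Proposition~\ref{AE_err} and Corollary~\ref{A_invert} exactly as you do. Your Cauchy--Binet/adjugate identity $\Jac^2_{G,\cW}F_c=\prod_i\sigma_i^2+c^2\sum_i(h_i')^2\prod_{j\neq i}\sigma_j^2$ absorbs both cases at once: the potentially dangerous factor $\sigma_1^{-2}$ never appears because each summand drops only a single $\sigma_i^2$, and the uniform Lipschitz bound on $d_\phi P$ controls the remaining products. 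The trade-off---balancing the $-C_1(D^*)^2$ deficit in $|\det\mathbf P|$ against the $c^2K_1(D^*)^2$ surplus from the height term---is the same in both proofs, and the resulting choice of $C_5$ plays the identical role. Your route is somewhat cleaner in that it avoids the basis gymnastics and the case split; the paper's route is more elementary in that it never invokes the matrix determinant lemma or singular value decomposition.
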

\begin{proof}
Let $\cW\subset T_\phi\cL=\cL$ be any $dn$-dimensional subspace. Denoted by $\{\widetilde X_1,\widetilde X_2,...,\widetilde X_{dn}\}$ an orthonormal basis in $\cW$ and $\{\omega_1,\omega_2,...,\omega_{dn}\}$ an orthonormal basis in $T_xM$, where $x=P(\phi)$. Without loss of generality, we can assume that 
\begin{enumerate}
\item[(1).] $d_\phi h(\widetilde X_l)=0$ for any $l\geq2$;
\item[(2).] $d_\phi P|_\cW$ has upper triangular matrix $\left(a_{sl}\right)_{1\leq s,l\leq dn}$ with non-negative diagonal under the above choices of bases.
\end{enumerate}
Then
\begin{align}\label{Fc_jacobian}
\Jac_GF_c|_\cW=\sqrt{a_{11}^2+t^2}\prod_{l=2}^{dn}a_{ll},
\end{align}
where 
$$t=d_\phi(ch(\widetilde X_1))=\frac{c}{2h(\phi)}d_\phi (h^2)(\widetilde X_1).$$
Therefore 
\begin{align}\label{t_bd}
|t|\leq c\|\widetilde X_1-d\Phi\circ d_\phi P(\widetilde X_1)\|_{L^2(S)}.
\end{align}
The first assertion in Corollary \ref{A_invert} suggests that 
$$\|A_\phi^{-1}-\widehat{A}_\phi^{-1} \| \leq \widetilde C_0\epsilon\|\phi-\Phi(P(\phi))\|_{L^2(S)}.$$
Therefore $\|A_\phi^{-1}\|\leq 2$ provided $\epsilon\leq [\widetilde C_0(R+R_1)]^{-1}$ (which is actually weaker than $\epsilon\leq\min\{[2C_0(R+R_1)]^{-1},[2C_0(R+R_1)^2]^{-1}\}$ mentioned in the proof of Corollary \ref{A_invert}). Conbined with Corollary \ref{AE_det_ineq}, we have $P$ is $2(dn+d)/\sqrt{dn}$-Lipschitz. In particular, 
\begin{align}\label{a_est}
0\leq a_{ii}\leq\frac{2(dn+d)}{\sqrt{dn}}.
\end{align} 
By \eqref{density_est} and \eqref{norm_ineq} we have
\begin{align}\label{t_bd_key}
&\|\widetilde X_1-d\Phi\circ d_\phi P(\widetilde X_1)\|_{L^2(S)} \nonumber \\
=&\|\widetilde X_1-a_{11}d\Phi(\omega_1)\|_{L^2(S)} \nonumber \\
\leq&\|\widetilde X_1\|_{L^2(S)}+a_{11}\|d\Phi(\omega_1)\|_{L^2(S)} \nonumber \\
\leq&\|\widetilde X_1\|_{L^2(S)}+\frac{2(dn+d)}{\sqrt{dn}}\|d\Phi(\omega_1)\|_{L^2(S)} \nonumber \\
\leq&\sqrt{2(dn)^{-1}c_0^{2}e^{(dn+d-2)R_1}}\|\widetilde X_1\|_{G_\phi}+\frac{2(dn+d)}{\sqrt{dn}}\sqrt{2e^{(dn+d-2) R_1}}\|d\Phi(\omega_1)\|_{L^2(S,d\mu_x)} \nonumber \\
=&\sqrt{2(dn)^{-1}c_0^{2}e^{(dn+d-2)R_1}}\|\widetilde X_1\|_{G_\phi}+\frac{2(dn+d)}{dn}\sqrt{2e^{(dn+d-2) R_1}}\|d\Phi(\omega_1)\|_{G_{\Phi(x)}} \nonumber \\
=&\sqrt{2(dn)^{-1}c_0^{2}e^{(dn+d-2)R_1}}\|\widetilde X_1\|_{G_\phi}+\frac{2(n+1)}{n}\sqrt{2e^{(dn+d-2) R_1}}\|\omega_1\| \nonumber \\
=&\sqrt{2(dn)^{-1}c_0^{2}e^{(dn+d-2)R_1}}+\frac{2(n+1)}{n}\sqrt{2e^{(dn+d-2) R_1}}=:c_5(n,R)=c_5. 
\end{align}
Without loss of generality, assume that 
$$C_5=C_5(n,R)\leq \max\left\{c_5^{-1}\left[2(dn+d)/\sqrt{dn}\right]^{-dn},1\right\}.$$
It follows from \eqref{t_bd} and the assumption of $0\leq c\leq C_5$ that 
$$|t|\leq\left[2(dn+d)/\sqrt{dn}\right]^{-dn}.$$ 
We consider two different cases.

\textbf{Case 1.} Assume that $a_{11}<\left[2(dn+d)/\sqrt{dn}\right]^{-dn}$. Then
 $$\sqrt{a_{11}^2+t^2}\leq\sqrt{2}\left[\frac{2(dn+d)}{\sqrt{dn}}\right]^{-dn}.$$ 
It follows from \eqref{Fc_jacobian} and \eqref{a_est} that
\begin{align*}
\Jac_GF_c|_\cW=&\sqrt{a_{11}^2+t^2}\prod_{l=2}^{dn}a_{ll}\\
\leq&\sqrt{2}\left[\frac{2(dn+d)}{\sqrt{dn}}\right]^{-dn}\prod_{l=2}^{dn}a_{ll}
\leq\sqrt{2}\left[\frac{2(dn+d)}{\sqrt{dn}}\right]^{-dn}\left[\frac{2(dn+d)}{\sqrt{dn}}\right]^{dn-1} 
=\frac{\sqrt{2dn}}{2(dn+d)}<1.
\end{align*}

\textbf{Case 2.} Assume that $a_{11}\geq\left[2(dn+d)/\sqrt{dn}\right]^{-dn}$. Then
$$\sqrt{a_{11}^2+t^2}\leq a_{11}+\frac{t^2}{2a_{11}}$$
and 
\begin{align}\label{Fc_jacobian_est_2}
\Jac_GF_c|_\cW=\sqrt{a_{11}^2+t^2}\prod_{l=2}^{dn}a_{ll}
\leq \left(a_{11}+\frac{t^2}{2a_{11}}\right)\prod_{l=2}^{dn}a_{ll}
=&\Jac_{G,\cW}P+\frac{t^2}{2a_{11}}\prod_{l=2}^{dn}a_{ll}  \nonumber\\
\leq&\Jac_{G,\cW}P+\frac{t^2}{2\left[\frac{2(dn+d)}{\sqrt{dn}}\right]^{-dn}}\left[\frac{2(dn+d)}{\sqrt{dn}}\right]^{dn-1}\nonumber \\
=&\Jac_{G,\cW}P+\frac{1}{2}\left[\frac{2(dn+d)}{\sqrt{dn}}\right]^{2dn-1} t^2.
\end{align}
Lemma \ref{easy_ineq}, Corollary \ref{A_invert} and Proposition \ref{AE_err} imply that
\begin{align*}
\Jac_{G,\cW}P=\det(A_\phi^{-1})\Jac_{G,\cW}E_\phi 
\leq&\left(\det(\widehat{A}_\phi^{-1})+\widetilde C_0\epsilon\|\phi-\Phi(P(\phi))\|_{L^2(S)}^2\right)\Jac_{G,\cW}E_\phi  \\
=&\det(\widehat{A}_\phi^{-1})\Jac_{G,\cW}E_\phi+\widetilde C_0\epsilon h^2(\phi)\Jac_{G,\cW}E_\phi \\
\leq&\det(\widehat{A}_\phi^{-1})\Jac_{G,\cW}E_\phi+\widetilde C_0 \epsilon h^2(\phi)\Jac_{G}E_\phi \\
=&\det(\widehat{A}_\phi^{-1})\Jac_{G,\cW}E_\phi+\widetilde C_0 \epsilon h^2(\phi)\left|\det\left(\frac{n+1}{n}U_\phi^T\right)\right| \\
=&\det(\widehat{A}_\phi^{-1})\Jac_{G,\cW}E_\phi+\widetilde C_0 \epsilon h^2(\phi)\det\left[\frac{n+1}{n}\left(U_\phi^TU_\phi\right)^{\frac{1}{2}}\right]\\
\leq&\det(\widehat{A}_\phi^{-1})\Jac_{G,\cW}E_\phi+\left(\frac{n+1}{n}\right)^{dn}\widetilde C_0 \epsilon h^2(\phi) \\
\leq& 1-C_1\|\widetilde X_1-d\Phi\circ \widehat{A}_\phi^{-1}\circ E_\phi(\widetilde X_1)\|^2_{L^2(S)}+\left(\frac{n+1}{n}\right)^{dn}\widetilde C_0\epsilon h^2(\phi). 
\end{align*}
It follows from \eqref{Fc_jacobian_est_2} that
\begin{align}\label{Fc_jacobian_est_3}
\Jac_GF_c|_\cW\leq& 1-C_1\|\widetilde X_1-d\Phi\circ \widehat{A}_\phi^{-1}\circ E_\phi(\widetilde X_1)\|^2_{L^2(S)}+\left(\frac{n+1}{n}\right)^{dn}\widetilde C_0\epsilon h^2(\phi)+\frac{t^2}{2}\left[\frac{2(dn+d)}{\sqrt{dn}}\right]^{2dn-1} . 
\end{align}
Corollary \ref{AE_det_ineq}, Corollary \ref{A_invert}, \eqref{density_abs_est} and \eqref{t_bd} imply that
\begin{align*}
|t|\leq & c\|\widetilde X_1-d\Phi\circ d_\phi P(\widetilde X_1)\|_{L^2(S)} \\
\leq& c\left\|\widetilde X_1-d\Phi\circ \widehat{A}_\phi^{-1}\circ E_\phi(\widetilde X_1)\right\|_{L^2(S)} 
+c\left\|d\Phi\circ \left(A_\phi^{-1}-\widehat{A}_\phi^{-1}\right)\circ E_\phi(\widetilde X_1)\right\|_{L^2(S)} \\
\leq& c\left\|\widetilde X_1-d\Phi\circ \widehat{A}_\phi^{-1}\circ E_\phi(\widetilde X_1)\right\|_{L^2(S)} 
+c\sqrt{2e^{(dn+d-2) R_1}}\left\|d\Phi\circ \left(A_\phi^{-1}-\widehat{A}_\phi^{-1}\right)\circ E_\phi(\widetilde X_1)\right\|_{L^2(S,d\mu_x)} \\
=& c\left\|\widetilde X_1-d\Phi\circ \widehat{A}_\phi^{-1}\circ E_\phi(\widetilde X_1)\right\|_{L^2(S)}  +c\sqrt{\frac{2}{dn}e^{(dn+d-2) R_1}}\left\|d\Phi\circ \left(A_\phi^{-1}-\widehat{A}_\phi^{-1}\right)\circ E_\phi(\widetilde X_1)\right\|_{G_{\Phi(x)}} \\
\leq& c\left\|\widetilde X_1-d\Phi\circ \widehat{A}_\phi^{-1}\circ E_\phi(\widetilde X_1)\right\|_{L^2(S)} +c\sqrt{\frac{2}{dn}e^{(dn+d-2) R_1}}\left\|d\Phi\right\|\left\| \left(A_\phi^{-1}-\widehat{A}_\phi^{-1}\right)\right\|\left\| E_\phi\right\|\left\|\widetilde X_1\right\|_{G_{\phi}} \\
\leq& c\left(\left\|\widetilde X_1-d\Phi\circ \widehat{A}_\phi^{-1}\circ E_\phi(\widetilde X_1)\right\|_{L^2(S)}+\frac{(n+1)}{n}\sqrt{2e^{(dn+d-2) R_1}}\widetilde C_0\epsilon \|\phi-\Phi(P(\phi))\|_{L^2(S)}\right)\\
=&c\left(\left\|\widetilde X_1-d\Phi\circ \widehat{A}_\phi^{-1}\circ E_\phi(\widetilde X_1)\right\|_{L^2(S)}+\frac{(n+1)}{n}\sqrt{2e^{(dn+d-2) R_1}}\widetilde C_0\epsilon h(\phi)\right).
\end{align*}
Since $C_5\leq1$ by definition, therefore $c<1$ and 
\begin{align}\label{t_upper_bd}
t^2\leq 2c^2\left\|\widetilde X_1-d\Phi\circ \widehat{A}_\phi^{-1}\circ E_\phi(\widetilde X_1)\right\|^2_{L^2(S)}+c_6\epsilon h^2(\phi),
\end{align}
where 
$$c_6=c_6(n,R)=2\left(\frac{(n+1)}{n}\sqrt{2e^{(dn+d-2)R_1}}\widetilde C_0\right)^2,$$
and $\epsilon<1$ by our assumption.
Let 
$$C_6=C_6(n,R)=\frac{1}{2}\left[\frac{2(dn+d)}{\sqrt{dn}}\right]^{2dn-1}c_6+\left(\frac{n+1}{n}\right)^{dn}\widetilde C_0.$$
\eqref{Fc_jacobian_est_3} and \eqref{t_upper_bd} can be summarized as
\begin{align*}
\Jac_GF_c|_\cW\leq& 1-\left\{C_1-\left[\frac{2(dn+d)}{\sqrt{dn}}\right]^{2dn-1}c^2\right\}\|\widetilde X_1-d\Phi\circ \widehat{A}_\phi^{-1}\circ E_\phi(\widetilde X_1)\|^2_{L^2(S)}+C_6\epsilon h^2(\phi).
\end{align*}
Choose $C_5$ in Lemma \ref{preparation} such that $$C_5=\min\left\{C_1^{\frac{1}{2}}\left[\frac{2(dn+d)}{\sqrt{dn}}\right]^{\frac{1-2dn}{2}},c_5\left[\frac{2(dn+d)}{\sqrt{dn}}\right]^{-dn},1\right\}.$$
Then
$$\Jac_GF_c|_\cW\leq 1+C_6\epsilon h^2(\phi)$$
and the lemma follows.
\end{proof}
We follow the idea in \cite{Burago2} to construct a ''homothety'' $\cA_t:M\to M$ by
$$\cA_t(p)=\exp_{x_0}(t\exp_{x_0}^{-1}(p)),\quad \forall p\in M,$$
and a map $\cQ_{\sigma}:M\times\RR_+\to M$ by
$$\cQ_\sigma(p,h)=\cA_{1+\sigma h^2}(p),\quad \forall p\in M.$$
\begin{lem}\label{lem_Rauch}
If $d(p,x_0)\leq 4\sigma^{-1/2}$, assuming $g$ sufficiently close to $g_0$ such that $(M,g)$ has strictly negative sectional curvature, then the $dn$-dimensional Jacobian of $\cQ_\sigma$ at $(p,h)$ is no greater than $(1+\sigma h^2)^{-1}$.
\end{lem}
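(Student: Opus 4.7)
The proof is a direct differential-geometric computation of $d\cQ_\sigma$ in polar coordinates around $x_0$, followed by Rauch's comparison theorem to control the transverse Jacobi field factors.

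I would write $r=d_g(p,x_0)$, $\mathbf{v}=\exp_{x_0,g}^{-1}(p)/r$, and $t=1+\sigma h^2$, so that $p=\exp_{x_0}(r\mathbf{v})$ and $\cQ_\sigma(p,h)=\exp_{x_0}(tr\mathbf{v})$. Because $\cA_t$ preserves the geodesic ray through $p$ and rescales its arclength parameter by $t$, its differential at $p$ splits via Jacobi fields along this radial geodesic. In an orthonormal basis $\{\partial_r,\xi_1,\dots,\xi_{dn-1}\}$ of $T_pM$ (radial plus transverse), together with a companion basis at $\cQ_\sigma(p,h)$ obtained by transverse Jacobi propagation, one has
$$d_p\cA_t(\partial_r)=t\,\partial_r',\qquad d_p\cA_t(\xi_i)=\alpha_i\,\xi_i',\qquad \partial_h\cQ_\sigma(p,h)=2\sigma h r\,\partial_r',$$
where $\alpha_i=\|J_{\xi_i}(tr)\|/\|\xi_i\|$ and $J_{\xi_i}$ is the transverse Jacobi field along the radial geodesic with $J(0)=0$, $J(r)=\xi_i$. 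The product $d\cQ_\sigma\,(d\cQ_\sigma)^T$ is then block-diagonal: a $1\times 1$ radial block equal to $(1+\sigma h^2)^2+4\sigma^2 h^2 r^2$, and a $(dn-1)\times(dn-1)$ diagonal tangential block with entries $\alpha_i^2$. Since the $dn$-dimensional Jacobian of a submersion equals $\sqrt{\det(d\cQ_\sigma(d\cQ_\sigma)^T)}$, this gives
$$\Jac\cQ_\sigma(p,h)=\sqrt{(1+\sigma h^2)^2+4\sigma^2 h^2 r^2}\,\prod_{i=1}^{dn-1}\alpha_i.$$

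The technical core is bounding $\prod_i\alpha_i$. Since $(M,g)$ has strictly negative sectional curvature and $g$ is $C^r$-close to $g_0$, the curvatures of $g$ are pinched near $[-4,-1]$, and I would apply Rauch's comparison theorem direction-by-direction against constant-curvature model spaces. This produces explicit $\sinh$-type estimates for each $\alpha_i$; the hypothesis $r\le 4\sigma^{-1/2}$ then keeps the arguments of these hyperbolic functions in a range where the product of the $\alpha_i$'s and the $h$-factor $\sqrt{(1+\sigma h^2)^2+4\sigma^2 h^2 r^2}$ combine to give at most $(1+\sigma h^2)^{-1}$.

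The main obstacle is the anisotropy of $\KK\mathbf{H}^n$: transverse curvatures take two distinct values ($-1$ and $-4$) depending on whether the direction is orthogonal to or inside the Cayley/complex/quaternionic line of $\mathbf{v}$, so the Rauch estimate must be set up direction-by-direction and reassembled into $\prod_i\alpha_i$. The perturbation $g$ further distorts this line decomposition (as in Section~\ref{s5}), but only to order $\epsilon$, so its contribution is lower-order and can be absorbed at the end. The constant $4$ in the distance bound is precisely what makes the convex function $\log\sinh(cs)$ behave well enough for the product of the $\alpha_i$'s to combine cleanly with the $h$-factor to yield the stated inequality.
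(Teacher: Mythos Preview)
The paper defers the proof entirely to \cite[Lemma~7.2]{Burago2}, and your polar-coordinate/Jacobi-field decomposition is indeed the approach taken there.

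However, your argument has a genuine gap. Your own formula
\[
\Jac\cQ_\sigma(p,h)=\sqrt{(1+\sigma h^2)^2+4\sigma^2 h^2 r^2}\,\prod_{i=1}^{dn-1}\alpha_i
\]
has every factor at least $1$: the square-root term is $\ge 1+\sigma h^2$, and each singular value satisfies $\alpha_i=\|J_i(tr)\|/\|J_i(r)\|\ge 1$ since $t=1+\sigma h^2>1$ and $\|J(s)\|$ is nondecreasing for Jacobi fields vanishing at the origin in nonpositive curvature. Hence your formula yields $\Jac\cQ_\sigma\ge 1+\sigma h^2>(1+\sigma h^2)^{-1}$, and no Rauch estimate or bound on $r$ can reverse this; your final paragraph simply asserts what is impossible. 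The underlying issue is that $\cA_{1+\sigma h^2}$ is an \emph{expansion} in negative curvature, whereas the section is titled ``compression trick'' and the stated bound requires contraction---the homothety parameter in \cite{Burago2} is taken $<1$, and the present paper appears to have a transcription slip. With a contracting parameter $t<1$ one gets $\alpha_i\le t$ from convexity of $\|J\|$, and then your outline does go through.

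Note also that only nonpositivity of sectional curvature enters that Jacobi estimate; the anisotropic $[-4,-1]$ pinching you flag as the ``main obstacle'' plays no role in this particular lemma, so the direction-by-direction analysis you propose is unnecessary here.
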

\begin{proof}
The proof can be found in \cite[Lemma 7.2]{Burago2}.
\end{proof}
Now we define $P_\sigma(\phi)=\cQ_\sigma(P(\phi),\sigma h(\phi))=\cQ_\sigma(F_\sigma(\phi))$ on $\cB(R)$. By definition $P_\sigma$ is smooth.
\begin{prop}\label{final_prop}
For every $R>0$, there exists a $\sigma>0$, $c>0$ and $\epsilon>0$ such that the $dn$-dimensional Jacobian $J(\phi):=\Jac_GP_\sigma(\phi)$  with respect to $G$ at any point $\phi\in\cB(R)$ satisfies
$$J(\phi)\leq 1-c h^2(\phi)$$
provided $\epsilon(g,r)\leq \epsilon$.
\end{prop}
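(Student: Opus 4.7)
The plan is to combine Lemma \ref{preparation} and Lemma \ref{lem_Rauch} via the chain rule applied to the factorization $P_\sigma = \cQ_\sigma \circ F_\sigma$ that is given just before the statement. Writing out what $P_\sigma$ does, one has
$$P_\sigma(\phi) = \cQ_\sigma(P(\phi),\sigma h(\phi)) = \cA_{1+\sigma^3 h^2(\phi)}(P(\phi)),$$
so the ``second coordinate'' inserted into $\cQ_\sigma$ is $\sigma h(\phi)$, not $h(\phi)$. This is the key place where the parameter $\sigma$ enters in two different ways, and the interplay between the two is what produces the negative term we need.

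First, I would apply Lemma \ref{preparation} with $c = \sigma$ (after shrinking $\sigma$ so that $\sigma \leq C_5(n,R)$); this gives
$$\Jac_G F_\sigma(\phi) \leq 1 + C_6 \epsilon\, h^2(\phi).$$
Next, I would apply Lemma \ref{lem_Rauch} at the point $(P(\phi),\sigma h(\phi))$. Since $\phi\in\cB(R)$, Lemma \ref{well_defined_P} gives $d_g(P(\phi),x_0)\le R_1=R_1(n,R)$, so after further requiring $\sigma \leq 16/R_1^2$ the hypothesis $d(P(\phi),x_0)\le 4\sigma^{-1/2}$ of Lemma \ref{lem_Rauch} is satisfied (this also uses the standing assumption that $g$ is negatively curved, which holds once $\epsilon$ is small). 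Thus
$$\Jac_G \cQ_\sigma(P(\phi),\sigma h(\phi)) \leq \bigl(1 + \sigma\cdot(\sigma h(\phi))^2\bigr)^{-1} = \bigl(1 + \sigma^3 h^2(\phi)\bigr)^{-1}.$$
The chain rule therefore yields
$$J(\phi) \leq \frac{1 + C_6 \epsilon\, h^2(\phi)}{1 + \sigma^3 h^2(\phi)} = 1 - \frac{(\sigma^3 - C_6 \epsilon)\,h^2(\phi)}{1 + \sigma^3 h^2(\phi)}.$$

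Finally, I would fix the parameters in the right order: choose $\sigma$ first, small enough to satisfy $\sigma\le C_5$ and $\sigma\le 16/R_1^2$; then choose $\epsilon$ small enough that $\sigma^3 - C_6\epsilon \geq \tfrac{1}{2}\sigma^3$. Since $\phi\in\cB(R)$ and $\Phi(P(\phi))\in\cB(R_1)$, the quantity $h(\phi)=\|\phi-\Phi(P(\phi))\|_{L^2(S)}$ is bounded by some $H=H(n,R)$, so $1+\sigma^3 h^2(\phi)\le 1+\sigma^3 H^2$. This gives
$$J(\phi) \leq 1 - \frac{\sigma^3/2}{1 + \sigma^3 H^2}\, h^2(\phi),$$
and setting $c := \tfrac{\sigma^3}{2(1+\sigma^3 H^2)}$ completes the proof.

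The main potential obstacle is merely bookkeeping: making sure the choice of $\sigma$ is simultaneously compatible with (a) the upper bound $C_5(n,R)$ from Lemma \ref{preparation}, (b) the domain condition $R_1 \leq 4\sigma^{-1/2}$ needed to invoke Lemma \ref{lem_Rauch}, and (c) the positivity condition $\sigma^3 > C_6\epsilon$, which forces $\epsilon$ to be chosen after $\sigma$. Since $R_1$, $C_5$, $C_6$, and $H$ all depend only on $n$ and $R$ (not on $g$ beyond the smallness $\epsilon$), such a selection exists, and the argument is essentially immediate from the two preceding lemmas.
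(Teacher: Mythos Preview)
Your proposal is correct and follows essentially the same route as the paper: factor $P_\sigma=\cQ_\sigma\circ F_\sigma$, bound $\Jac_G F_\sigma$ by Lemma~\ref{preparation} and the $dn$-dimensional Jacobian of $\cQ_\sigma$ by Lemma~\ref{lem_Rauch}, and combine via the chain rule. The only cosmetic difference is in parameter selection---you fix $\sigma$ first (depending only on $n,R$) and then choose $\epsilon$ so that $C_6\epsilon\le\tfrac12\sigma^3$, whereas the paper couples them by setting $\sigma=(3C_6\epsilon)^{1/3}$; both orderings work and yield the same conclusion.
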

\begin{proof}
Choose $\sigma$ such that $\sigma<C_5(n,R)$ in Lemma \ref{preparation} and $(4\sigma)^{-1/2}>R_1(n,R)$ in Lemma \ref{well_defined_P}.

For any $\phi\in\Phi(M)$, $h(\phi)=0$, Hence we have $\Jac ~d_\phi P_\sigma=\Jac~ d_\phi P=1$, following the equality conditions in Lemma \ref{easy_ineq} and Corollary \ref{AE_det_ineq}. 

For any $\phi\in\cB(R)\setminus\Phi(M)$, Lemma \ref{preparation} and Lemma \ref{lem_Rauch} implies
\begin{align*}
J(\phi)\leq \Jac\cQ_\sigma\cdot \Jac_GF_\sigma\leq\frac{1+C_6\epsilon(g,r) h^2(\phi)}{1+\sigma^3h^2(\phi)}.
\end{align*}
Choose $\sigma=\sqrt[3]{3C_6\epsilon}$ and $\epsilon$ such that 
\begin{enumerate}
\item[(1).]$\epsilon$ is smaller than all upper bounds for $\epsilon(g,r)$ mentioned prior to this proposition;
\item[(2).]$3C_6\epsilon h^2(\phi)\leq 3C_6\epsilon(R+R_1)^2\leq 1$;
\item[(3).]$\sigma<C_5$;
\item[(4).]$(4\sigma)^{-1/2}\geq R_1$.
\end{enumerate}
Let $c=C_6(n,R)\epsilon=\sigma^3/3$, Hence 
\begin{align*}J(\phi)\leq\frac{1+C_6\epsilon(g,r)h^2(\phi)}{1+\sigma^3h^2(\phi)}\leq\frac{1+\frac{\sigma^3}{3}h^2(\phi)}{1+\sigma^3h^2(\phi)}\leq1-ch^2(\phi).\tag*{\qedhere}\end{align*}
\end{proof}
Now we are in position to complete the proof of Proposition \ref{key_prop}.
\begin{proof}[Proof of Proposition \ref{key_prop}]
Let $\epsilon$ and $\sigma$ be as in Proposition \ref{final_prop}. Assume that $\epsilon(g,r)<\epsilon$. Consider the map $P_\sigma:\cB(R)\to M$ constructed above. Proposition \ref{vol_ineq} and the inequality in Proposition \ref{final_prop} suggests that $P_\sigma\circ f$ does not increase volume for any Riemannian manifold $N$ and any $1$-Lipschitz map $f:N\to\cB(R)$. In the case of equality, $P_\sigma$ has Jacobian equal to $1$ for almost all points in $f(N)$. Therefore by continuity and the above proposition, $h(\phi)=0$ for all $\phi\in f(N)$, hence $f(N)\subset \Phi(M)$. Therefore the map $P_\sigma$ possesses all properties claimed in Proposition \ref{key_prop}. 
\end{proof}
\begin{rmk}
In the proof of Proposition \ref{final_prop}, we mentioned three additional conditions on $g$. Here is a summary of all conditions posed on $g$ before Proposition \ref{final_prop}.
\begin{enumerate}
\item[(1).] $d_g(x,x_0)\leq R/4$ for any $x\in B_{x_0}(R/5)$, introduced at the begining of Section \ref{s2};
\item[(2).] $g$ has negative sectional curvature, introduced in the proof of Corollary \ref{main_thm};
\item[(3).] \eqref{density_est} with $R_0>R_1$, introduced in the remark of Lemma \ref{delta_X_density} and \eqref{density_abs_est};
\item[(4).] \eqref{cond_on_g_2} with $\widetilde R_0>R_1$, introduced in Lemma \ref{well_defined_P};
\item[(5).] $\mathrm{diam}_g(B_{x_0}(2R))<5R$, introduced in Lemma \ref{well_defined_P};
\item[(6).] $\epsilon=\epsilon(g,r)$ should be small enough such that we can apply the Gram-Schmidt process mentioned in Section \ref{s5};
\item[(7).] $\epsilon=\epsilon(g,r)$ should be small enough such that $\|\widehat A_{x,s}- A_{x,s}\|_{L^2(S)}\leq c_1\epsilon$ for some $c_1=c_1(n,R)$ and any $x\in B_{x_0}(R_1)$, introduced in the proof of Proposition \ref{A_err};
\item[(8).] $\epsilon=\epsilon(g,r)=\|g-g_0\|_{C^r}<\min\{[2C_0(R+R_1)]^{-1},[2C_0(R+R_1)^2]^{-1},1\}$, introduced in Corollary \ref{A_invert} and Lemma \ref{preparation}.
\end{enumerate}
Therefore Theorem \ref{main_thm_alt} and Corollary \ref{main_thm} hold for any $g$ with $\epsilon$ small enough such that $g$ satisfies all 11 conditions. In particular, there exists some $g\neq g_0$ such that $(D,g)$ is a strict minimal filling and boundary rigid.
\end{rmk}

\appendix

\section{The Cayley hyperbolic space}\label{appB}

The set of octonions $\OO$ is an $8$-dimensional non-associative, non-commutative division algebra over $\RR$. Let $\langle\cdot,\cdot\rangle$ be the Euclidean inner product on $\OO$ and $|\cdot|$ the induced norm. Then we have the following properties. (See \cite[1. Composition algebra]{Springer00})
\begin{enumerate}
\item[(1).] $|ab|=|a||b|$ for any $a,b\in\OO$;
\item[(2).] $\langle ab,ac\rangle=\langle ba,ca\rangle=|a|^2\langle b,c\rangle$ for any $a,b,c\in \OO$;
\item[(3).] $\langle ac,bd\rangle+\langle ad,bc\rangle=2\langle a,b\rangle \langle c,d\rangle$ for any $a,b,c,d\in\OO$;
\item[(4).] $\overline{a}=2\langle a,1\rangle-a$ for any $a\in \OO$;
\item[(5).] $2\langle a,b\rangle=2\langle \overline{a},\overline{b}\rangle=\overline{a}b+\overline{b}a=a\overline{b}+b\overline{a}$ for any $a,b\in\OO$;
\item[(6).] $(ba)\overline{a}=\overline{a}(ab)=|a|^2b$ for any $a,b\in \OO$;
\item[(7).] $a(\overline{b}c)+b(\overline{a}c)=(c\overline{a})b+(c\overline{b})a=\langle a,b\rangle c$ for any $a,b,c\in\OO$;
\item[(8).] (Moufang Identities)
\begin{enumerate}
\item[(i).] $(ab)(ca)=a((bc)a)$ for any $a,b,c\in\OO$;
\item[(ii).] $a(b(ac))=(a(ba))c$ for any $a,b,c\in\OO$;
\item[(iii).] $b(a(ca))=((ba)c)a$ for any $a,b,c\in\OO$;
\end{enumerate}
\item[(9).] Multiplications involving only two octonions are associative.
\end{enumerate}
Let 
$$I_{1,2}=\matiii{1&0&0}{0&-1&0}{0&0&-1}$$
and 
$$\FFJ(1,2,\OO)=\left\{\FFX\in\mathrm{Mat}_{3\times 3}(\OO):I_{1,2}\FFX^*I_{1,2}=\FFX\right\}.$$
Any element $\FFX$ can be written in the following form
$$\FFX(\theta,a)=\matiii{\theta_1&a_3&\overline{a}_2}{-\overline{a}_3&-\theta_2&-a_1}{-a_2& -\overline{a}_1&-\theta_3},\quad \theta_j\in\RR, a_j\in\OO, j=1,2,3,$$
where $\theta=(\theta_1,\theta_2,\theta_3)$ and $x=(x_1,x_2,x_3)$.
\begin{dfn}[``Matrix model'']\label{matrix_model_OH^2}
We define the Cayley hyperbolic space $\OO\mathbf{H}^2$ as
$$\OO\mathbf{H}^2=\left\{\FFX\in\FFJ(1,2,\OO):\FFX^2=\FFX,\tr(\FFX)=1,\FFX_{11}>0 \right\}.$$
\end{dfn}
\begin{prop}\label{model_equiv_OH^2}
For any trace $1$ idempotent $\FFX\in\FFJ(1,2,\OO)$ with $\FFX_{11}\neq 0$, there exists a unique vector $(\theta,a,b)\in\RR_+\times\OO^2$ such that 
$$\FFX=\mathrm{sgn}(\FFX_{11})I_{1,2}(\theta,b,c)^*(\theta,b,c),$$
where $\mathrm{sgn}(t)=t/|t|$ when $t\neq 0$. The set
$$\FFJ_{1,0}:=\{\FFX\in\FFJ(1,2,\OO):\FFX^2=\FFX,\tr(\FFX)=1,\FFX_{11}=0\}$$
is isomorphic to $\OO\mathbf{P}^1\isom S^8$.
\end{prop}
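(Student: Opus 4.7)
The plan is to exploit the identity that for any $v = (\theta, b, c) \in \RR \times \OO^2$, setting $A := I_{1,2} v^* v$ gives $A^2 = \langle v, v \rangle_{I_{1,2}} \cdot A$, where $\langle v, v\rangle_{I_{1,2}} := \theta^2 - |b|^2 - |c|^2$. With this identity in hand, the decomposition claim reduces to solving for $(\theta, b, c)$ from the first row of $\FFX$ and then showing the remaining entries are forced. First I would prove the key identity $A^2 = \langle v,v\rangle_{I_{1,2}} A$ by direct entry-by-entry computation. The subtle steps involve products of three octonions, which collapse to two-variable products via the appendix identities $c(\overline{c}b) = |c|^2 b$ and $(\overline{b}c)\overline{(\overline{b}c)} = |b|^2|c|^2$; every other product that arises is either scaled by a real factor or reduces to a two-variable product, both of which behave associatively by property (9).

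Once the key identity is established, the first row of $A$ is $(\theta^2, \theta b, \theta c)$, which immediately yields uniqueness: if $\FFX = \mathrm{sgn}(\FFX_{11})A$ with $\theta > 0$, then $\theta = \sqrt{|\FFX_{11}|}$, $b = \mathrm{sgn}(\FFX_{11})\FFX_{12}/\theta$, and $c = \mathrm{sgn}(\FFX_{11})\FFX_{13}/\theta$. For existence, I would define $(\theta, b, c)$ by those same formulas and set $\FFY := \mathrm{sgn}(\FFX_{11}) I_{1,2} v^* v$. The $(1,1)$-coordinate of $\FFX^2 = \FFX$ reads $\FFX_{11}^2 - |\FFX_{12}|^2 - |\FFX_{13}|^2 = \FFX_{11}$, which rearranges to $\langle v, v\rangle_{I_{1,2}} = \mathrm{sgn}(\FFX_{11})$; combining with the key identity yields $\FFY^2 = \FFY$ and $\tr \FFY = 1$. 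To conclude $\FFY = \FFX$, I would show a trace-one idempotent in $\FFJ(1,2,\OO)$ with $\FFX_{11} \neq 0$ is determined by its first row: the $(2,3)$-coordinate of the idempotent equation forces $\FFX_{11}\FFX_{23} = -\overline{\FFX_{12}}\FFX_{13}$, while the $(2,2)$ and $(3,3)$-coordinates give quadratics in $\FFX_{22}, \FFX_{33}$ whose two roots are separated by the trace-one constraint once $\mathrm{sgn}(\FFX_{11})$ is fixed; a short case analysis rules out the spurious branch.

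The main technical obstacle is the bookkeeping for $A^2 = \langle v, v\rangle_{I_{1,2}} A$: one must arrange each entry so that no triple of \emph{distinct} octonions ever appears in a single product, which requires repeated use of property (9) together with the specific identities cited. Everywhere the computation is careful, the non-associativity never bites, but the arrangement must be checked case by case.

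For the second claim, if $\FFX_{11} = 0$ the $(1,1)$-coordinate of $\FFX^2 = \FFX$ forces $|\FFX_{12}|^2 + |\FFX_{13}|^2 = 0$, hence $\FFX_{12} = \FFX_{13} = 0$, so $\FFX$ reduces to the lower-right $2 \times 2$ Hermitian block. Parameterizing $\FFX_{22} = s$, $\FFX_{33} = 1-s$, the remaining idempotent relations collapse to $|\FFX_{23}|^2 = s(1-s)$ with $s \in [0,1]$. The substitution $y := 2s - 1$ recasts this as $y^2 + 4|\FFX_{23}|^2 = 1$ inside $\RR \oplus \OO \isom \RR^9$, manifestly a copy of $S^8$; this is the standard model of the Cayley projective line $\OO\mathbf{P}^1$, completing the identification.
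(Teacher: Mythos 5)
Your proposal is correct, and it takes a genuinely different route from the paper. The paper's proof computes $\FFX^2$ entry by entry, reads off from the off-diagonal entries that $a_j a_{j+1} = m_{j+2}\overline{a}_{j+2}$ for reals $m_j$, and then splits into three cases according to which of $a_2, a_3$ vanish, in each case solving directly for $\theta_i$ and defining $(\theta,b,c)$ by explicit formulas. You instead isolate the structural identity $\bigl(I_{1,2}v^*v\bigr)^2 = \langle v,v\rangle_{I_{1,2}}\, I_{1,2}v^*v$, which the paper never states; I verified the entry-by-entry computation and every triple product that appears indeed reduces to a two-variable product (e.g. $c(\overline{c}b)=|c|^2 b$, $(\overline{b}c)(\overline{c}b)=|\overline{b}c|^2=|b|^2|c|^2$), so non-associativity never enters. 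This identity lets you get existence for free once you know $\FFX$ is determined by its first row, which cleanly separates the algebraic from the combinatorial content and arguably makes the "why" more visible than the paper's brute-force computation.

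One small imprecision worth noting: for the uniqueness-from-first-row step, you describe the $(2,2)$ and $(3,3)$ entries as giving quadratics in $\FFX_{22},\FFX_{33}$ separated by the trace constraint. This does work, but it entails a sign case analysis that quietly also needs $\FFX_{11}\neq 0$. A cleaner route through the same material: once $\FFX_{23}$ is pinned by $\FFX_{11}\FFX_{23}=-\overline{\FFX_{12}}\FFX_{13}$, the $(1,2)$ and $(1,3)$ entries of $\FFX^2=\FFX$ determine $\FFX_{22}$ and $\FFX_{33}$ \emph{linearly} (when $\FFX_{12},\FFX_{13}\neq 0$): one finds $-\FFX_{22}=|\FFX_{13}|^2/\FFX_{11}$ and $-\FFX_{33}=|\FFX_{12}|^2/\FFX_{11}$, and the diagonal quadratics are then automatically satisfied by the $(1,1)$ relation $\FFX_{11}^2-|\FFX_{12}|^2-|\FFX_{13}|^2=\FFX_{11}$. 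The degenerate subcases where $\FFX_{12}$ or $\FFX_{13}$ vanish reduce to the paper's Case 1 and Case 2 and your sign argument handles them. For the final claim, your explicit parametrization $y^2+4|\FFX_{23}|^2=1$ is more self-contained than the paper's appeal to Baez's description of $\OO\mathbf{P}^1$; both are fine.
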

\begin{proof}
Write
$$\FFX=\matiii{\theta_1&a_3&\overline{a}_2}{-\overline{a}_3&-\theta_2&-a_1}{-a_2& -\overline{a}_1&-\theta_3}.$$
It suffices to show that 
\begin{align}\label{matrix_vector_equiv}
\matiii{\theta_1&a_3&\overline{a}_2}{\overline{a}_3&\theta_2&a_1}{a_2&\overline{a}_1&\theta_3}=\mathrm{sgn}(\theta_1)(\theta,b,c)^*(\theta,b,c)
\end{align}
for some $(\theta,b,c)\in\RR\times\OO^2$.
Notice that 
$$\FFX^2=\matiii{\theta_1^2-|a_3|^2-|a_2|^2&\theta_1a_3-\theta_2a_3-\overline{a_1a_2} &\theta_1\overline{a}_2-a_3a_1-\theta_3\overline{a}_2}{-\theta_1\overline{a}_3+\theta_2\overline{a}_3+a_1a_2&\theta_2^2-|a_3|^2+|a_1|^2& -\overline{a_2a_3}+\theta_2a_1+\theta_3a_1}{-\theta_1a_2+\overline{a_3a_1}+\theta_3a_2 & -a_2a_3+\theta_2\overline{a}_1+\theta_3\overline{a}_1&\theta_3^2-|a_2|^2+|a_1|^2 }.$$
The condition that $\FFX^2=\FFX$ implies that 
$$a_ja_{j+1}=m_{j+2}\overline{a}_{j+2},\quad m_j\in\RR,j\in\ZZ\mathrm{~mod~}3.$$

\textbf{Case 1:} If $a_3=a_2=0$, then $\theta_1^2=\theta_1$ implies that $\theta_1=1$ or $0$. When $\theta_1=1$, $\tr\FFX=\theta_1^2+\theta_2^2+\theta_3^2+2|a_1|^2=1$ implies $\theta_2=\theta_3=a_1=0$. Hence $\FFX=I_{1,2}(1,0,0)^*(1,0,0)$. When $\theta_1=0$, we have a natural diffeomorphism
$$\FFJ_{1,0}\to\{\FFX\in\mathrm{Mat}_{2\times 2}(\OO):\FFX^2=\FFX, \tr \FFX=1, \FFX^*=\FFX\}$$
by forgetting the first row and the first column, where the latter one is $\OO\mathbf{P}^1\isom S^8$ by \cite[3. Octonionic projective geometry]{Baez}.

\textbf{Case 2:} If $a_3=0$ and $a_2\neq 0$ (the case when $a_2=0$ and $a_3\neq 0$ is similar), then $\FFX^2=\FFX$ implies the following.
\begin{enumerate}
\item[(1).] $\theta_1^2-|a_2|^2=\theta_1\neq 0$;
\item[(2).] $-\theta_1+\theta_3=-1$;
\item[(3).] $\theta_2^2+|a_1|^2=-\theta_2$;
\item[(4).] $\theta_3^2-|a_2|^2+|a_1|^2=-\theta_3$.
\end{enumerate}
Notice that $1=\theta_1-\theta_3$, $\tr\FFX=1$ implies that $\theta_2=a_1=0$ and that $|a_2|^2=\theta_1\theta_3$. Hence we can choose $\theta=\sqrt{|\theta_1|}$, $b=0$ and $c=\mathrm{sgn}(\theta_1)\overline{a}_2/\theta$ and \eqref{matrix_vector_equiv} holds.

\textbf{Case 3:} If $a_2,a_3$ both not equal to $0$, then 
$$|a_j|^2=m_{j+1}m_{j+2},\quad j=1,2,3\mathrm{~mod~} 3.$$
Therefore $\FFX^2=\FFX$ and $\tr\FFX=1$ imply the following.
\begin{enumerate}
\item[(1).] $\theta_1-\theta_2-m_3=1$;
\item[(2).] $\theta_1-\theta_3-m_2=1$;
\item[(3).] $m_1-\theta_2-\theta_3=1$;
\item[(4).] $\theta_1-\theta_2-\theta_3=1$.
\end{enumerate}
Hence $m_j=\theta_j$, where $j=1,2,3$. Choose $\theta=\sqrt{|\theta_1|}$, $b=\mathrm{sgn}(\theta_1)a_3/\theta$ and $c=\mathrm{sgn}(\theta_1)\overline{a}_2/\theta$ and \eqref{matrix_vector_equiv} holds. Uniqueness of the vector is trivial.
\end{proof}
\begin{rmk}
It is easy to see that the condition ``$\tr \FFX=1$'' and the condition ``all $2\times 2$ subdeterminants of $\FFX$ vanish'' introduced in \cite[\S19. Spaces of $R$-rank-1, pp. 137]{Mostow73} are equivalent in this setting.
\end{rmk}
Therefore we have the following alternative definition for the Cayley hyperbolic space (also see \cite[pp. 87]{Parker}).
\begin{dfn}[``Vector model'']\label{vector_model_OH^2}
The Cayley hyperbolic space can be alternatively defined as $$\OO\mathbf{H}^2=\{(\theta,a,b)\in\RR_{+}\times\OO^2: \theta^2-|a|^2-|b|^2=1\}.$$
\end{dfn}
\begin{rmk}
It follows from simple computations that for any $\FFX\in\OO\mathbf{H}^2$, $\FFX_{11}\geq 1$. 

The descriptions here coincide with \cite[\S19. Spaces of $\RR$-rank 1]{Mostow73} in the following way. For any $v=(\theta,b,c)\in\RR_{\geq 0}\times\OO^2$, define $\FFX_v=I_{1,2}v^*v\in\FFJ(1,2,\OO)$. Let $\FFJ_0=\{\FFX\in\FFJ(1,2,\OO):\FFX^2=0,\FFX\neq 0\}$ denotes the collection of all \emph{nilpotents}. It follows from a similar argument as in the above proof that all elements in $\FFJ_0$ can be written as $\pm\FFX_v$ for some $v\in\RR_{\geq 0}\times\OO^2$ satisfying $vI_{1,2}v^*=0$. Define
$$\FFJ_1=\{\FFX\in\FFJ(1,2,\OO):\FFX^2=\FFX,\tr\FFX=1\}.$$
Then for any $\FFX\in\FFJ_1$, the above proposition implies that either $\FFX=\FFX_v$ for some $v\in\RR_{\geq 0}\times\OO^2$ satisfying $vI_{1,2}v^*=1$ (when $\FFX_{11}\geq 1$) or $\FFX=-\FFX_w$ for some $w\in\RR_{\geq 0}\times\OO^2$ satisfying $wI_{1,2}w^*=-1$ (when $\FFX_{11}\leq 0$). (Given the remark of Proposition \ref{model_equiv_OH^2}, one can check that the set $\FFJ_0\union\FFJ_1$ is naturally identified with the Cayley projective plane $\OO\mathbf{P}^2$ described in \cite[\S19. Spaces of $\RR$-rank 1, pp. 137]{Mostow73}.)

Following the definition in \cite{Mostow73}, a point $\FFX\in\FFJ_1$ is called an \emph{inner point} if for any $\FFY\in\FFJ_0\cup\FFJ_1$ such that $\tr\FFX\circ\FFY= 0$, there exists some $\FFX_0\in\FFJ_0$ satisfying $\tr\FFX_0\circ\FFY=0$, where $\FFX\circ\FFY$ is the Jordan multiplication defined as $(\FFX\FFY+\FFY\FFX)/2$. Otherwise $\FFX$ is an \emph{outer point}. In \cite[\S19. Spaces of $\RR$-rank 1, pp. 138]{Mostow73}, the Cayley hyperbolic space is defined to be the collection of all inner points in $\FFJ_1$. Hence, the equivalence of our model for the Cayley hyperbolic space and the model in \cite[\S19. Spaces of $\RR$-rank 1]{Mostow73} boil down to the following proposition.

\begin{prop}
The ``matrix model'' for $\OO\mathbf{H}^2$ defined in Definition \ref{matrix_model_OH^2} is the collection of all inner points in $\FFJ_1$.
\end{prop}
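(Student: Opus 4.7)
The plan is to recognize that the inner/outer dichotomy in $\FFJ_1$ is governed by the signature of the indefinite Hermitian form $(u,v)_{I_{1,2}} := uI_{1,2}v^*$ on $\OO^3$, which as a real symmetric form has signature $(8,16)$. By Proposition \ref{model_equiv_OH^2} together with the remark following Definition \ref{vector_model_OH^2}, every element of $\FFJ_0 \cup \FFJ_1$ is of the form $\epsilon\FFX_v$ with $\FFX_v = I_{1,2}v^*v$ for some nonzero $v \in \OO^3$ with $vI_{1,2}v^* \in \{0,\pm 1\}$ and $\epsilon \in \{\pm 1\}$; the $\FFJ_0$-representatives are exactly those with $vI_{1,2}v^* = 0$. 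The first step is to establish the trace formula
\begin{equation*}
\tr(\FFX_v \circ \FFX_w) \,=\, |vI_{1,2}w^*|^2.
\end{equation*}
I would verify this by a direct computation: writing $(\FFX_v)_{ij} = \epsilon_i \bar v_i v_j$ (with $\epsilon_1 = 1$, $\epsilon_2 = \epsilon_3 = -1$), the Jordan symmetrization $\tr(\FFX_v\FFX_w + \FFX_w\FFX_v)/2$ collapses to $\sum_{i,j} \epsilon_i\epsilon_j\,\mathrm{Re}\bigl((\bar v_i v_j)(\bar w_j w_i)\bigr)$, and the octonion identity $\langle ab,cd\rangle + \langle ad,cb\rangle = 2\langle a,b\rangle\langle c,d\rangle$ together with $\overline{xy} = \bar y\bar x$ identifies this sum with $|vI_{1,2}w^*|^2$. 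Consequently, for $\FFX = \epsilon\FFX_v$ and $\FFY = \epsilon'\FFX_w$, the Jordan-trace orthogonality $\tr(\FFX\circ\FFY) = 0$ is equivalent to the octonion-valued vanishing $vI_{1,2}w^* = 0$.

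With this translation, $\FFX = \epsilon\FFX_v \in \FFJ_1$ is an inner point precisely when, for every nonzero $w \in v^\perp := \{u \in \OO^3 : uI_{1,2}v^* = 0\}$ with $wI_{1,2}w^* \in \{0,\pm 1\}$, there exists a nonzero isotropic $u \in w^\perp$. The second step is to compute the signature of the restricted quadratic form $Q|_{v^\perp}$, where $Q(u) := uI_{1,2}u^*$. At the normalized representatives $v = (1,0,0)$ and $v = (0,1,0)$ this is immediate: $v^\perp = \{0\}\times\OO^2$ is negative definite of real signature $(0,16)$ in the first case, while $v^\perp = \OO\times\{0\}\times\OO$ has real signature $(8,8)$ in the second. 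These two cases exhaust everything after using the transitivity of the isotropy subgroup of the Iwasawa $\cK$-factor (namely $\Spin(9)$ from Corollary \ref{curv_data_OH^2}) on timelike and on spacelike unit spheres of the $I_{1,2}$-form. For direction one, if $\FFX_{11} > 0$ then by the remark after Proposition \ref{model_equiv_OH^2} we may take $v$ timelike; every nonzero $w \in v^\perp$ is then spacelike, so $w^\perp$ has signature $(8,8)$ and contains nonzero isotropic vectors, whence $\FFX$ is inner. For direction two, if $\FFX_{11} \leq 0$ then $\FFX = -\FFX_v$ with $v$ spacelike; since $v^\perp$ has signature $(8,8)$ it contains a timelike unit vector $w$ (which automatically has $w_1 \ne 0$ so $\FFX_w \in \FFJ_1$), and such a $w$ has $w^\perp$ negative definite and thus devoid of nonzero isotropic vectors, whence $\FFX$ is not inner.

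The main obstacle will be the careful octonionic computation of the trace identity, since the naive cyclicity of the trace fails when the matrix entries do not associate: one must first use the Jordan symmetrization to guarantee that the imaginary octonionic parts cancel in pairs of the form $(\bar v_i v_j)(\bar w_j w_i) + \overline{(\bar v_i v_j)(\bar w_j w_i)}$, and then invoke the Moufang-type identities (properties 2, 3, 5 of the octonions in Appendix \ref{appB}) to rearrange the real-valued sum into $|vI_{1,2}w^*|^2$. A secondary technical point is verifying that the octonionic orthogonal complement $v^\perp$ has the claimed signatures for all $v$ rather than just the standard representatives, for which the transitivity of $\Spin(9) \subset \cK \subset F_4^{-20}$ on the unit spheres of the appropriate real types is the cleanest input.
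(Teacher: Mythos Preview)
Your proof has a genuine gap: the trace formula $\tr(\FFX_v\circ\FFX_w)=|vI_{1,2}w^*|^2$ is \emph{false} over the octonions. With $v=(1,a,b)$ and $w=(1,c,d)$, the symmetrized trace you compute correctly is
\[
\tr(\FFX_v\circ\FFX_w)=1-2\langle a,c\rangle-2\langle b,d\rangle+|a|^2|c|^2+|b|^2|d|^2+2\langle\overline{a}b,\overline{c}d\rangle,
\]
whereas $|vI_{1,2}w^*|^2=|1-a\overline{c}-b\overline{d}|^2$ expands with cross term $2\langle a\overline{c},b\overline{d}\rangle$ instead. These two cross terms are \emph{not} equal in general: take $a=e_1$, $b=e_2$, $c=e_4$, $d=e_7$ in a standard basis with $e_1e_2=e_3$, $e_1e_4=e_5$, $e_2e_4=e_6$, $e_3e_4=e_7$. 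Then $\langle\overline{a}b,\overline{c}d\rangle=\langle e_3,e_3\rangle=1$, while $\langle a\overline{c},b\overline{d}\rangle=\langle e_5,e_2e_7\rangle=-1$ (the sign is forced by $[e_1,e_2,e_4]\neq 0$). So the two quantities differ by $4$. The identity you invoke (property~3) only gives a polarization relation, not the rearrangement you need; the obstruction is precisely non-associativity.

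This breaks the reduction of Jordan-trace orthogonality to vanishing of a fixed $\OO$-valued sesquilinear form, and with it the clean signature argument on $v^\perp$. The paper's proof confronts exactly this: it shows that $\tr(\FFX_v\circ\FFX_w)$ is a perfect square only of an expression depending nonlinearly on the entries (namely $|1-c\overline{a}-(da)\overline{(ba)}/|a|^2|^2$ when $a\neq 0$), and then proceeds by explicit case analysis---constructing isotropic $u$ by hand via a two-parameter ansatz $u=(1,\lambda c,\xi d)$ and a quadratic system in $\lambda,\xi$, rather than invoking an abstract signature count. Your overall strategy (timelike $\Rightarrow$ inner, spacelike $\Rightarrow$ outer via a timelike witness) is morally what the paper does, but it cannot be packaged through the form $(u,v)_{I_{1,2}}$; you would need to work directly with the Jordan-algebraic ``orthogonality'' $\tr(\FFX\circ\FFY)=0$, for which there is no $\OO^3$-Hermitian substitute.
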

\begin{proof}
For any $0\neq v=(1,a,b),w=(1,c,d)\in\RR_{\geq 0}\times\OO^2$, we have
\begin{align*}
\tr\FFX_v\circ\FFX_w=1-2\langle a,c\rangle-2\langle b,d\rangle+|ac|^2+|bd|^2+2\langle \overline{a}b,\overline{c}d\rangle.
\end{align*}
When $a\neq 0$, the first Moufang identity implies that
\begin{align*}
\langle \overline{a}b,\overline{c}d\rangle=\langle b,a(\overline{c}d)\rangle=\frac{1}{|a|^2}\langle ba,(a(\overline{c}d))a\rangle=\frac{1}{|a|^2}\langle ba,(a\overline{c})(da)\rangle=\frac{1}{|a|^2}\langle c\overline{a},(da)\overline{(ba)}\rangle.
\end{align*}
Therefore
\begin{align}\label{inner_prod_sq}\tr\FFX_v\circ\FFX_w=
\begin{cases}
\displaystyle \left|1-c\overline{a}-\frac{(da)\overline{(ba)}}{|a|^2}\right|^2,~~& a\neq 0,\\
\displaystyle |1-d\overline{b}|^2, & a=0.
\end{cases}
\end{align}

\textbf{Case 1:} When $a=b=0$, the quantity in \eqref{inner_prod_sq} never equal to 0. The only $w\in\RR_{\geq 0}\times\OO^2$ such that $\tr\FFX_v\circ\FFX_w=0$ is in the form of $w=(0,c,d)$ with $c,d\in\OO$. Let $u=(1,d/|(c,d)|,-c/|(c,d)|)$. One can verify that $\FFX_u\in\FFJ_0$ and $\tr\FFX_u\circ\FFX_w=0$. Therefore $\FFX_v$ is an inner point. Denote by $x_0$ this very special inner point, i.e.,
 $$x_0=\matiii{1&0&0}{0&0&0}{0&0&0}.$$
Simple computation shows that for any $u\in\RR_{\geq 0}\times\OO^2$, $\FFX_u\circ x_0=0$ implies that $u\in\{0\} \times\OO^2$. Let $[\FFX_u]$ be the unique element in $\RR\FFX_u\ints \FFJ_1$ for those $\FFX_u\not\in\FFJ_0$.  Therefore $[\FFX_u]$ is an outer point for any $u\in\{0\}\times\OO^2\setminus \{0\}$.

\textbf{Case 2:} When $(a,b)\neq0$, without loss of generality we can assume that $a\neq 0$. If $|a|^2+|b|^2<1$, $\tr\FFX_v\circ\FFX_w=0$ implies that $|c|^2+|d|^2\geq 1$, following \eqref{inner_prod_sq}. If $c=0$ (or similarly $d=0$), it is easy to find a $u=(1,\lambda,\xi d)$ for some real numbers $\lambda,\xi$ such that $\FFX_u\in\FFJ_0$ and $\tr \FFX_u\circ\FFX_w=0$. If $c,d\neq 0$, let $\lambda,\xi\in\RR$ and $u=(1,\lambda c,\xi d)$. Then the set of equations with respect to $\lambda$ and $\xi$ given by
\begin{align}\label{quad_eq_set}
\begin{cases}
\displaystyle  \FFX_u\in\FFJ_0;\\
\displaystyle  \tr\FFX_u\circ\FFX_w=0
\end{cases}
\Longleftrightarrow
\begin{cases}
\displaystyle  1=\lambda^2|c|^2+\xi^2|d|^2;\\
\displaystyle  1=\lambda|c|^2+\xi|d|^2
\end{cases}
\end{align}
has real solutions
\begin{align*}
\begin{cases}
\displaystyle  \lambda=\frac{1\pm\sqrt{\frac{|d|^2}{|c|^2}(|c|^2+|d|^2-1)}}{|c|^2+|d|^2};\\
\displaystyle  \xi=\frac{1\mp\sqrt{\frac{|c|^2}{|d|^2}(|c|^2+|d|^2-1)}}{|c|^2+|d|^2}
\end{cases}
\end{align*}
if and only if $|c|^2+|d|^2\geq 1$. Therefore we can conclude that $[\FFX_v]$ is an inner point when $vI_{1,2}v^*>0$. 

On the other hand, we define $\widehat{v}=(1,a/(|a|^2+|b|^2),b/(|a|^2+|b|^2))$ for any $v=(1,a,b)\in\RR_{\geq 0}\times\OO^2$ such that $vI_{1,2}v^*<0$, . One can verify that $\tr\FFX_v\circ\FFX_{\widehat{v}}=0$ and that $\widehat{v}I_{1,2}\widehat{v}^*>0$. Therefore $\tr\FFX\circ\FFX_{\widehat{w}}\neq 0$ for any $\FFX\in\FFJ_0$ by \eqref{quad_eq_set}, which implies that $[\FFX_v]$ is an outer point when $vI_{1,2}v^*<0$. 

Hence we can conclude from the above discussions that $\FFX\in\FFJ_1$ is an inner point if $\FFX_{11}\geq 1$ and is an outer point if $\FFX_{11}\leq 0$, which proves that our definition of $\OO\mathbf{H}^2$ coincide with the definition in \cite[\S19. Spaces of $\RR$-rank 1]{Mostow73}.
\end{proof}
We refer to \cite[\S19. Spaces of $\RR$-rank 1]{Mostow73} and \cite{Baez} for an overview of related subjects. A very detailed and general theory can be found in \cite{Springer1}, \cite{Springer00} by T. A. Springer and F. D. Veldkemp for further reference.
\end{rmk}

Denote by $g_0$ the symmetric metric on $\OO\mathbf{H}^2$ and $M=(\OO\mathbf{H}^2,g_0)$ such that the distance function on $\OO\mathbf{H}^2$ is given by
$$\cosh(2d(\FFX,\FFY))=2\tr(\FFX\circ\FFY)-1,\quad\forall \FFX,\FFY\in\OO\mathbf{H}^2$$
as in \cite{Mostow73}. We identify $T_{x_0}M_0$ with $\OO^2$ such that each unit vector $v=(a,b)\in\OO^2$ corresponds to the initial vector of $\gamma_v(t):=(\cosh(t),a\sinh(t),b\sinh(t))$. 
We will compute the Riemannian curvature data at $x_0$ by understanding the \textit{geodesic hinge} $\angle\FFX x_0\FFY$ for any $\FFX,\FFY\in M\setminus \{x_0\}$, where $\angle\FFX x_0\FFY$ consists of two geodesic segments $\overline{\FFX x_0}$, $\overline{\FFY x_0}$ and the angle $\measuredangle\FFX x_0\FFY$. A \textit{comparison hinge} of $\angle\FFX x_0\FFY$ in some space form $M'$ is a geodesic hinge $\angle\FFX 'x_0'\FFY'$ in $M'$ with the same angle such that the lengths of geodesic segments $\overline{\FFX x_0}$, $\overline{\FFY x_0}$ and $\overline{\FFX' x_0'}$, $\overline{\FFY' x_0'}$ are equal respectively.

\begin{prop}
The following hold for the Cayley hyperbolic space.
\begin{enumerate}\label{curv_OH^2}
\item[(1).] For any unit vector $v=(a,b)\in\OO^2$, $\gamma_v(t)$ gives a unit speed geodesic starting at $x_0$ with initial vector $v$; 
\item[(2).] The Riemannian metric at $x_0$ is given by the Euclidean inner product on $\OO^2$. Hence the map $\chi:\OO^2\to M$ such that
$$\chi(a,b)=\left(\cosh(|(a,b)|),\frac{a}{|(a,b)|}\sinh(|(a,b)|),\frac{b}{|(a,b)|}\sinh(|(a,b)|)\right)$$
gives the geodesic normal coordinates centered at $x_0$ (hence $d\chi:\OO^2\to T_{x_0}M_0$ is the isometric correspondence from $\OO^2$ with Euclidean inner product to $T_{x_0}M_0$ mentioned above)
\item[(3).] For any $v=(a,b)\in\OO^2$, denote by 
\begin{align*}
\Cay(v)=\begin{cases}
\displaystyle \OO\cdot(1,a^{-1}b),\quad& a\neq0; \\
\displaystyle \OO\cdot(0,1),&a=0
\end{cases}
\end{align*}
the \emph{Cayley line} containing $v$. Then for any non-parallel pair of non-zero vectors $(a,b),(c,d)$ contained in the same Cayley line and any pair of points $\FFX\in\gamma_{(a,b)}(\RR_+)$ and $\FFY\in\gamma_{(c,d)}(\RR_+)$, the comparison hinge $\angle \FFX' x_0'\FFY'$ of $\angle \FFX x_0\FFY$ in a space form of constant sectional curvature $-4$ satisfies $d(\FFX',\FFY')=d(\FFX,\FFY)$;
\item[(4).] For any $a,b\in\OO\setminus\{0\}$, any pair of points $\FFX\in\gamma_{(a,0)}(\RR_+)$ and $\FFY\in\gamma_{(0,b)}(\RR_+)$, the comparison hinge $\angle \FFX' x_0'\FFY'$ of $\angle \FFX x_0\FFY$ in a space form of constant sectional curvature $-1$ satisfies $d(\FFX',\FFY')=d(\FFX,\FFY)$.
\end{enumerate}
\end{prop}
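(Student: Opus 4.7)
The strategy is to reduce every assertion to the explicit distance formula $\cosh(2d(\FFX,\FFY)) = 2\tr(\FFX\circ\FFY) - 1$ from Definition \ref{matrix_model_OH^2}. From the proof of Proposition \ref{model_equiv_OH^2} one extracts the cleaner identity $\tr(\FFX_u\circ\FFX_v) = |uI_{1,2}v^*|^2$ valid for $u,v\in\RR_+\times\OO^2$ satisfying $uI_{1,2}u^*=vI_{1,2}v^*=1$, and this will be the workhorse. Part (1) is then immediate: $\cosh^2 t-|a\sinh t|^2-|b\sinh t|^2 = 1$ shows $\gamma_v(t)\in\OO\mathbf{H}^2$, and for unit $v=(a,b)$ a direct computation gives $v_sI_{1,2}v_t^*=\cosh s\cosh t-(|a|^2+|b|^2)\sinh s\sinh t=\cosh(t-s)\in\RR$. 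Hence $\cosh(2d(\gamma_v(s),\gamma_v(t)))=2\cosh^2(t-s)-1=\cosh(2(t-s))$, proving $\gamma_v$ is unit-speed, and differentiating at $t=0$ identifies its initial vector with $v$.

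For (2), I Taylor-expand $\cosh(2d(\gamma_v(t),\gamma_w(t)))$ in $t$ for unit $v,w\in\OO^2$: the leading behaviour of $|v_tI_{1,2}w_t^*|^2$ gives
$$d(\gamma_v(t),\gamma_w(t))^2=2t^2\bigl(1-\langle v,w\rangle_{\RR^{16}}\bigr)+O(t^4).$$
Since $|v-w|^2_{g_0}=2-2\langle v,w\rangle_{g_0}$ for unit vectors in the Riemannian sense, polarization shows that $d\chi_0$ identifies the Euclidean inner product on $\OO^2$ with $g_0|_{x_0}$, so $\chi$ is the exponential map in geodesic normal coordinates.

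For (3) and (4), the cleanest route is to use the symmetry of $\OO\mathbf{H}^2=F_4^{-20}/\Spin(9)$: the isotropy $\Spin(9)$ acts transitively on Cayley lines through $x_0$, and on pairs of orthogonal Cayley lines through $x_0$. This lets me reduce (3) to the standard Cayley line and (4) to the standard orthogonal pair, where the computation is elementary. For (3), writing $v=(a,0)$, $w=(c,0)$ with $|a|=|c|=1$, I compute $v_sI_{1,2}w_t^*=\cosh s\cosh t-(a\overline{c})\sinh s\sinh t$, then using $\mathrm{Re}(a\overline{c})=\langle a,c\rangle=\cos\theta$ and $|a\overline{c}|=1$,
$$\cosh(2d)=2\cosh^2 s\cosh^2 t+2\sinh^2 s\sinh^2 t-1-\sinh(2s)\sinh(2t)\cos\theta;$$
the elementary identity $2\cosh^2 s\cosh^2 t+2\sinh^2 s\sinh^2 t-1=\cosh(2s)\cosh(2t)$ converts this into the hinge formula of a space form of curvature $-4$. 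For (4), taking $v=(a,0)$, $w=(0,b)$ kills all cross terms so $v_sI_{1,2}w_t^*=\cosh s\cosh t$; combined with $\cos\theta=\langle v,w\rangle_{\RR^{16}}=0$ this gives $\cosh d=\cosh s\cosh t$, the hinge formula of a space form of curvature $-1$. The main obstacle, if one prefers to bypass the appeal to $\Spin(9)$-symmetry and compute directly with generic $v=(\mu_1,\mu_1\xi)$ and $w=(\mu_2,\mu_2\xi)$ inside a general Cayley line, is the nontrivial octonionic identity $(\mu_1\xi)(\overline{\xi}\,\overline{\mu_2})=|\xi|^2\mu_1\overline{\mu_2}$; this can be proved from Artin's theorem (two-generator subalgebras of $\OO$ are associative), together with the Moufang identities and $\xi\overline{\xi}\in\RR$, but it is the only genuinely non-routine step in the whole proposition.
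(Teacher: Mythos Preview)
Your ``cleaner identity'' $\tr(\FFX_u\circ\FFX_v)=|uI_{1,2}v^*|^2$ is false over the octonions. The paper's formula \eqref{inner_prod_sq} gives the correct expression $\tr\FFX_v\circ\FFX_w=\bigl|1-c\overline{a}-(da)\overline{(ba)}/|a|^2\bigr|^2$, and the term $(da)\overline{(ba)}/|a|^2$ is \emph{not} $d\overline{b}$ in general; equivalently, $\langle\overline{a}b,\overline{c}d\rangle\neq\langle a\overline{c},b\overline{d}\rangle$ in general (try $a=i$, $b=k\ell$, $c=j$, $d=\ell$ in $\OO=\HH\oplus\HH\ell$). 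Your alternative direct route for (3) inherits this error: the identity $(\mu_1\xi)(\overline{\xi}\,\overline{\mu_2})=|\xi|^2\mu_1\overline{\mu_2}$ is likewise false (take $\mu_1=i$, $\xi=\ell$, $\mu_2=j$: the two sides are $k$ and $-k$), and it cannot be deduced from Artin's theorem since three generators are involved.

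That said, your conclusions survive. In (1), both points lie on the same geodesic so only $a\overline{a},b\overline{b}\in\RR$ appear and the false identity happens to hold there; in (2) the discrepancy lives in the $\sinh^4 t$ term, which is $O(t^4)$ and does not affect the leading order; in your (3) via $\Spin(9)$ and in (4) the third coordinate vanishes, which again kills the problematic cross term. So the argument can be repaired by either (i) dropping the general identity and computing $\tr(\FFX_u\circ\FFX_v)$ honestly in each case, or (ii) noting explicitly in which restricted situations the identity is valid. The paper takes route (i): for (2) it uses the first variation $-\frac{d}{dt}\big|_{t=0}d(\gamma_v(t),\gamma_w(1))$ rather than a symmetric Taylor expansion, and for (3) it computes directly on a general Cayley line parametrized as $b\cdot(\cos\theta,a\sin\theta)$, where the key simplification is $\overline{b}(ba)=|b|^2a$ (property (6)), not your false associativity-type identity. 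Your reduction of (3) via the transitive $\Spin(9)$ action on Cayley lines is a perfectly good alternative to the paper's direct computation, provided you cite that transitivity as an independent classical fact rather than as a consequence of this proposition.
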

\begin{proof}
\begin{enumerate}
\item[(1).] This follows easily by direct computations.
\item[(2).] Let $(a,b),(c,d)$ be unit vectors in $\OO^2$. Hence the inner product of these two vectors is given by
\begin{align*}
-\left.\frac{d}{dt}\right|_{t=0}d(\gamma_{(a,b)}(t),\gamma_{(c,d)}(1))=&-\frac{\left.\frac{d}{dt}\right|_{t=0}\left(2\tr\gamma_{(a,b)}(t)\circ\gamma_{(c,d)}(1)-1\right)}{2\sinh(2)} \\
=&\frac{2\sinh(1)\cosh(1)(\langle a,c\rangle+\langle b,d\rangle)}{2\sinh(2)} 
=\langle a,c\rangle+\langle b,d\rangle.
\end{align*}
\item[(3).] Let $a,b,c$ be unit octonions and $\theta\in[0,\pi/2)$. Let $v=(\cos\theta,a\sin\theta)$. For any $t_1,t_2>0$, we have
\begin{align*}
&\cosh(2d(\gamma_{bv}(t_1),\gamma_{cv}(t_2)))\\
=&2\tr\gamma_{bv}(t_1)\circ\gamma_{cv}(t_2)-1 \\
=&2\cosh^2(t_1)\cosh^2(t_2)-4\cosh(t_1)\sinh(t_1)\cosh(t_2)\sinh(t_2)\langle bv,cv\rangle -1\\
&+2\sinh^2(t_1)\sinh^2(t_2)(\cos^4\theta+\sin^4\theta)+4\sinh^2(t_1)\sinh^2(t_2)\cos^2\theta\sin^2\theta \\
=&2\cosh^2(t_1)\cosh^2(t_2)+2\sinh^2(t_1)\sinh^2(t_2)-\sinh(2t_1)\sinh(2t_1)\langle bv,cv\rangle -1 \\
=&\cosh(2t_1)\cosh(2t_2)-\sinh(2t_1)\sinh(2t_1)\langle bv,cv\rangle.
\end{align*}
Notice that $t_1=d(\gamma_{bv}(t_1),E_1)$ and $t_2=d(\gamma_{cv}(t_2),E_1)$, the above equation coincides with the law of cosine in a space form with constant sectional curvature $-4$.
\item[(4).] Let $a,b$ be unit octonions and $t_1,t_2>0$. Write $v=(a,0)$ and $w=(0,b)$. Then
\begin{align*}
\cosh(2d(\gamma_{v}(t_1),\gamma_{w}(t_2)))=2\cosh^2(t_1)\cosh^2(t_2)-1,
\end{align*}
which implies that 
$$\cosh(d(\gamma_{v}(t_1),\gamma_{w}(t_2)))=\cosh(t_1)\cosh(t_2).$$
The above equation coincides with the law of cosine in a space form with constant sectional curvature $-1$.\qedhere
\end{enumerate}
\end{proof}
A direct corollary of the above proposition is the following.
\begin{cor}\label{curv_data_OH^2}
For any non-zero $v,w\in \OO^2$ the following hold.
\begin{enumerate}
\item[(1).] If $v,w$ belong to the same Cayley line and $v\not\in\RR w$, then the sectional curvature of the $2$-dimensional plane spanned by $d\chi(v),d\chi(w)$ is $-4$;
\item[(2).] If $\Cay(v)\perp\Cay(w)$, then the sectional curvature of the $2$-dimensional plane spanned by $d\chi(v),d\chi(w)$ is $-1$.
\item[(3).] Recall that from classical results that $\OO\mathbf{H}^2=F_4^{-20}/\Spin(9)$ with $\Spin(9)$ the stabilizer of $x_0$. Since $F_4^{-20}$ acts by isometries on $M=(\OO\mathbf{H}^2,g_0)$, identifying $T_{x_0}M_0$ with $\OO^2$, one can view $\Spin(9)$ as a subgroup of $\SO(\OO^2)\isom\SO(16)$. In particular, $\Spin(9)$ maps Cayley lines to Cayley lines and acts transitively on the set of all Cayley lines.  
\end{enumerate}
\end{cor}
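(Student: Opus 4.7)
The plan is to deduce assertions (1) and (2) from the hinge identities recorded in Proposition \ref{curv_OH^2}, using the standard Riemannian fact that the distance function along a geodesic hinge determines the sectional curvature of the plane spanned by its initial directions; assertion (3) will then be extracted from the classical description of $\OO\mathbf{H}^2$ as the symmetric space $F_4^{-20}/\Spin(9)$.

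Concretely, I will invoke the fourth-order expansion
\[
d(\exp_{x_0}(tu_1),\exp_{x_0}(su_2))^2 = t^2+s^2-2ts\cos\theta - \tfrac{1}{3}K(u_1,u_2)\,t^2s^2\sin^2\theta + O((t^2+s^2)^{5/2})
\]
for unit vectors $u_1,u_2\in T_{x_0}M_0$ at angle $\theta$, so that if the hinge distance in $M_0$ coincides, for all $(t,s)$, with the corresponding hinge distance in a constant-curvature space form of curvature $K_0$ -- as Proposition \ref{curv_OH^2}(3) and (4) provide for the relevant configurations -- then $K(u_1,u_2)=K_0$. For (1), any non-parallel pair $v,w$ in a common Cayley line rescales to unit vectors in that line; writing the line as $\OO\cdot v_0$ with $v_0=(\cos\theta,a\sin\theta)$ of unit length, these rescaled vectors take the form $bv_0,cv_0$ for unit octonions $b,c$ with $b\notin\RR c$, and Proposition \ref{curv_OH^2}(3) then forces $K=-4$ on the $2$-plane they span. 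For (2), the special case $v=(a,0)$, $w=(0,b)$ is immediate from Proposition \ref{curv_OH^2}(4); the general case is reduced to this one via part (3) by first moving $\Cay(v)$ to $\OO\times\{0\}$ with an element of $\Spin(9)$, then using the stabiliser of $\OO\times\{0\}$ in $\Spin(9)$ -- a copy of $\Spin(8)$ acting via a spin representation on the orthogonal $\{0\}\times\OO$ -- to move $\Cay(w)$ to $\{0\}\times\OO$.

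For (3), the identification $\OO\mathbf{H}^2=F_4^{-20}/\Spin(9)$ and the ensuing embedding of the isotropy representation $\Spin(9)\hookrightarrow\SO(T_{x_0}M_0)\isom\SO(16)$ -- realised by the $16$-dimensional real spin representation of $\Spin(9)$ -- are classical (see e.g.\ \cite{Baez}, \cite{Mostow73}). That $\Spin(9)$ permutes Cayley lines then follows from part (1), which characterises a Cayley line as an $8$-dimensional tangent subspace every $2$-plane of which has sectional curvature $-4$, a property manifestly preserved by isometries. Transitivity is a dimension count: the space of Cayley lines is parametrised by $\OO\mathbf{P}^1\isom S^8$, the stabiliser in $\Spin(9)$ of a Cayley line is a copy of $\Spin(8)$, and $\dim\Spin(9)-\dim\Spin(8)=36-28=8=\dim S^8$, so by connectedness of $S^8$ the orbit exhausts $S^8$.

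The main hurdle is orchestrating the logical order: the full form of (2) invokes (3), and the intrinsic characterisation of Cayley lines feeding into (3) comes from (1); I will need to verify that this dependency is not circular -- namely, that (1) is proved without using (3), and that the special case of (2) together with the transitivity in (3) suffices to close the argument -- after which the remaining pieces are routine, with the non-elementary ingredients (the structure of $F_4^{-20}$ and the spin representations of $\Spin(9)$) treated as black boxes.
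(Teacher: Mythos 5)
Your overall architecture matches what the paper intends: the paper states this as a "direct corollary" of Proposition~\ref{curv_OH^2} with no written proof, and your plan — deduce (1) from the $-4$ hinge comparison, the special case of (2) from the $-1$ hinge comparison, and promote the special case to the general case of (2) via the $\Spin(9)$-transitivity asserted in (3) — is the correct way to make that implication explicit. The fourth-order Taylor expansion of $d^2$ along a hinge is the right tool, and the reduction of the general case of (2) to $v=(a,0)$, $w=(0,b)$ via an isometry fixing $x_0$ is exactly what is needed. (One small redundancy: once you use a $\Spin(9)$-element to carry $\Cay(v)$ onto $\OO\times\{0\}$, the orthogonal Cayley line $\Cay(w)=\Cay(v)^\perp$ is automatically carried onto $\{0\}\times\OO$, so the second step with the $\Spin(8)$-stabiliser is unnecessary.)

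There is, however, a genuine gap in your treatment of (3). You claim that "$\Spin(9)$ permutes Cayley lines \ldots follows from part (1), which characterises a Cayley line as an $8$-dimensional tangent subspace every $2$-plane of which has sectional curvature $-4$." Part (1) proves only one implication: vectors lying in a common Cayley line span $2$-planes of curvature $-4$. It does not assert the converse — that every $8$-dimensional subspace with this property is a Cayley line — and that converse is precisely what your argument needs. Without it, you have shown that Cayley lines are contained in an isometry-invariant family, not that the family equals the set of Cayley lines. The same remark affects your dimension-count for transitivity, which presumes (without proof) that the $\Spin(9)$-stabiliser of a Cayley line is $\Spin(8)$. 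The paper avoids this entirely by explicitly phrasing (3) as a recollection of classical facts about $F_4^{-20}$, $\Spin(9)$, and the isotropy representation; you should do the same (or replace the curvature argument with the standard one: Cayley lines are the tangent spaces at $x_0$ of the totally geodesic $\OO\mathbf{H}^1\isom\RR\mathbf{H}^8$ subspaces through $x_0$, a family manifestly permuted by isometries fixing $x_0$). Once (3) is accepted as classical, the rest of your derivation for (1) and (2) is complete and matches the paper's intent.
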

\begin{rmk}
Let $p$ be a fixed point in $\CC\mathbf{H}^n$. Recall that for any unit vector $v\in T^1_p\CC\mathbf{H}^n$ with respect to the symmetric metric, there exists a linear map $J\in\End(T_p\CC\mathbf{H}^n)$ such that the sectional curvature between $v$ and $J(v)$ is $-4$. The same argument holds true in quaternionic hyperbolic spaces but \textbf{NOT} in the Cayley hyperbolic space. This is due to the non-associativity of octonionic multiplication. In fact, if such a map $J$ exists, without loss of generality we can assume that $p=x_0$ and identify the tangent space at $x_0$ with $\OO^2$ via $d\chi$ as in Proposition \ref{curv_OH^2}. If $J(1,0)=(a,0)$ for some unit octonion $a\in\OO\setminus \RR$, then for any unit octonion $b$ and any $\theta\in[0,\pi/2)$, $J(\cos\theta,b\sin\theta)=a(\cos\theta,b\sin\theta)$. In particular, $J(0,1)=(0,a)$. Similarly we have $J(b\sin\theta,\cos\theta)=a(b\sin\theta,\cos\theta)$ for any unit octonion $b$ and any $\theta\in[0,\pi/2)$, which implies that $J(v)=av$ for any $v\in\OO^2$. Therefore, for any octonions $b\neq 0,c$, $J(b,bc)=(ab,a(bc))\in\Cay(b,bc)=\Cay(1,c)$, which implies that $(ab)c=a(bc)$. Notice that if $(ab)c=a(bc)$ for any $b,c\in\OO$, then $a$ must be real. This contradicts the assumption that $a\in\OO\setminus\RR$.
\end{rmk}

\Addresses

\begin{thebibliography}{9}


\bibitem{Baez}

Baez, J. C.: \textit{The octonions}, Bull. Amer. Math. Soc. (N.S.) 39 (2002), no. 2, 145–205.

\bibitem{BCIK}
Bangert, V., Croke, C., Ivanov, S., Katz, M.:
\textit{Filling area conjecture and ovalless real hyperelliptic surfaces}.
Geom. Funct. Anal. 15 (2005), no. 3, 577–597.
53C20.

\bibitem{Besikovitch} Besicovitch, A. S.:
\textit{On two problems of Loewner}.
J. London Math. Soc. 27 (1952), 141–144.
27.2X.

\bibitem{BCG1} Besson, G., Courtois, G., Gallot, S.: \textit{Entropies et rigidités des espaces localement symétriques de courbure strictement négative}. (French) [Entropy and rigidity of locally symmetric spaces with strictly negative curvature] Geom. Funct. Anal. 5 (1995), no. 5, 731–799.

\bibitem{BGuJ} Bonthonneau, Y. G., Guillarmou, C., J\'ez\'equel, M.: \textit{Scattering rigidity for analytic metrics}. arxiv:2201.02100.

\bibitem{BH99} Bridson, M. R., Haefliger, A.: \textit{Metric spaces of non-positive curvature}. Grundlehren der Mathematischen Wissenschaften [Fundamental Principles of Mathematical Sciences], 319. Springer-Verlag, Berlin, 1999. xxii+643 pp. ISBN: 3-540-64324-9.
\bibitem{Burago3} Burago, D., Ivanov, S.: \textit{On asymptotic volume of tori}. Geom. Funct. Anal. 5 (1995), no. 5, 800–808.

\bibitem{Burago4} Burago, D., Ivanov, S.: \textit{On asymptotic volume of Finsler tori, minimal surfaces in normed spaces, and symplectic filling volume.} Ann. of Math. (2) 156 (2002), no. 3, 891–914.

\bibitem{Burago5} Burago, D., Ivanov, S.: \textit{Gaussian images of surfaces and ellipticity of surface area functionals.} Geom. Funct. Anal. 14 (2004), no. 3, 469–490.

\bibitem{Burago1} Burago, D., Ivanov, S.: \textit{Boundary rigidity and filling volume minimality of metrics close to a flat one.}
Ann. of Math. (2) 171 (2010), no. 2, 1183–1211.

\bibitem{Burago2} Burago, D., Ivanov, S.: \textit{Area minimizers and boundary rigidity of almost hyperbolic metrics.}
Duke Math. J. 162 (2013), no. 7, 1205–1248.

\bibitem{CF} Connell, C., Farb, B.:\textit{ The degree theorem in higher rank.} J. Differential Geom. 65 (2003), no. 1, 19–59.
Erratum for "The degree theorem in higher rank''. J. Differential Geom. 105 (2017), no. 1, 21–32.

\bibitem{Croke1} Croke, C. B.:
\textit{Rigidity and the distance between boundary points.}
J. Differential Geom. 33 (1991), no. 2, 445–464.

\bibitem{Croke2} Croke, C. B.: \textit{Rigidity theorems in Riemannian geometry.} ``Geometric methods in inverse problems and PDE control'', 47–72, IMA Vol. Math. Appl., 137, Springer, New York, 2004.

\bibitem{CDS} Croke, C. B., Dairbekov, N. S., Sharafutdinov, V. A.:
\textit{Local boundary rigidity of a compact Riemannian manifold with curvature bounded above.}
Trans. Amer. Math. Soc. 352 (2000), no. 9, 3937–3956.
\bibitem{Eberlein96} Eberlein, P. B.: \textit{Geometry of nonpositively curved manifolds.} Chicago Lectures in Mathematics. University of Chicago Press, Chicago, IL, 1996. vii+449 pp. ISBN: 0-226-18197-9; 0-226-18198-7.

\bibitem{Gromov} Gromov, M.: \textit{Filling Riemannian manifolds.} J. Differential Geom. 18 (1983), no. 1, 1–147.
\bibitem{Helgason78} Helgason, S.: \textit{Differential geometry, Lie groups, and symmetric spaces.} Pure and Applied Mathematics, 80. Academic Press, Inc. [Harcourt Brace Jovanovich, Publishers], New York-London, 1978. xv+628 pp. ISBN: 0-12-338460-5.

\bibitem{Holmes_Thompson} Holmes, R. D., Thompson, A. C.: \textit{n-dimensional area and content in Minkowski spaces.}
Pacific J. Math. 85 (1979), no. 1, 77–110.

\bibitem{Hormander98} Hörmander, L.: \textit{The analysis of linear partial differential operators. I. Distribution theory and Fourier analysis.} Reprint of the second (1990) edition. Classics in Mathematics. Springer-Verlag, Berlin, 2003. x+440 pp. ISBN: 3-540-00662-1 35-02.

\bibitem{Ivanov1} Ivanov, S.:
\textit{On two-dimensional minimal fillings.} (Russian)
Algebra i Analiz 13 (2001), no. 1, 26–38; translation in
St. Petersburg Math. J. 13 (2002), no. 1, 17–25.

\bibitem{Ivanov2} Ivanov, S.:
\textit{Volumes and areas of Lipschitz metrics.} (Russian)
Algebra i Analiz 20 (2008), no. 3, 74–111; translation in
St. Petersburg Math. J. 20 (2009), no. 3, 381–405.

\bibitem{Ivanov3} Ivanov, S.:
\textit{Volume comparison via boundary distances.} Proceedings of the International Congress of Mathematicians. Volume II, 769–784, Hindustan Book Agency, New Delhi, 2010.

\bibitem{Michel} Michel, R.:
\textit{Sur la rigidité imposée par la longueur des géodésiques.} (French) [On the rigidity imposed by the length of geodesics]
Invent. Math. 65 (1981/82), no. 1, 71–83.

\bibitem{Mostow73}

Mostow, G. D.: \textit{Strong rigidity of locally symmetric spaces.} Annals of Mathematics Studies, No. 78. Princeton University Press, Princeton, N.J.; University of Tokyo Press, Tokyo, 1973. v+195 pp.

\bibitem{Parker}

Parker, J. R.: \textit{Hyperbolic spaces}, Jyväskylä Lectures in Mathematics 2 (2008).

\bibitem{Pestov_Uhlmann} Pestov, L., Uhlmann, G.:
\textit{Two dimensional compact simple Riemannian manifolds are boundary distance rigid.}
Ann. of Math. (2) 161 (2005), no. 2, 1093–1110.

\bibitem{Santalo} Santaló, L. A.: \textit{Integral geometry and geometric probability.} Second edition. With a foreword by Mark Kac. Cambridge Mathematical Library. Cambridge University Press, Cambridge, 2004. xx+404 pp. ISBN: 0-521-52344-3.

\bibitem{Springer00}

Springer, T. A., Veldkamp, F. D.: \textit{Octonions, Jordan algebras and exceptional groups}, Springer Monographs in Mathematics. Springer-Verlag, Berlin, 2000. viii+208 pp. ISBN: 3-540-66337-1.

\bibitem{Springer1}

Springer, T. A., Veldkamp, F. D.: \textit{Elliptic and hyperbolic octave
planes I, II, III}, Ibid, vol. 66, pp. 413-451 (1963).

\bibitem{SU}
Stefanov, P., Uhlmann, G.:
\textit{Boundary and lens rigidity, tensor tomography and analytic microlocal analysis.} Algebraic analysis of differential equations from microlocal analysis to exponential asymptotics, 275–293, Springer, Tokyo, 2008.

\bibitem{SUV}
Stefanov, P., Uhlmann, G., Vasy, A.:
\textit{Local and global boundary rigidity and the geodesic X-ray transform in the normal gauge}, 
Ann. of Math. (2) 194 (2021), no. 1, 1–95.

\end{thebibliography}
\end{document}